\documentclass[12pt]{article}

\newcommand{\frommain}{}
\usepackage{tikz-cd}
\usepackage{amsthm} 
\newtheorem{lemma}{Lemma}

\usepackage{graphicx}
\usepackage[sans]{dsfont}
\usepackage[english]{babel}
\usepackage{xfrac}
\usepackage{color}
\usepackage{amsfonts, amsthm, amsmath}
\usepackage[all,cmtip]{xy}
\usepackage{tikz}
\usepackage{subcaption}

\usepackage{standalone}
\usepackage{import}

\usepackage[hypertexnames=false,
backref=page,
    pdftex,
    pdfpagemode=UseNone,
    breaklinks=true,
    extension=pdf,
    colorlinks=true,
    linkcolor=blue,
    citecolor=blue,
    urlcolor=blue,
]{hyperref}

\usepackage{cleveref}

\usepackage{mathbbol}
\DeclareSymbolFontAlphabet{\amsmathbb}{AMSb}

\renewcommand{\Re}{\operatorname{Re}}
\renewcommand{\Im}{\operatorname{Im}}

\newtheorem{theorem}{Theorem}
\newtheorem{corollary}[theorem]{Corollary}
\newtheorem{exercise}{Exercise}
\newtheorem*{problem}{Problem}

\DeclareMathOperator{\rk}{rk}
\DeclareMathOperator{\im}{im}

\def\Xint#1{\mathchoice
{\XXint\displaystyle\textstyle{#1}}
{\XXint\textstyle\scriptstyle{#1}}
{\XXint\scriptstyle\scriptscriptstyle{#1}}
{\XXint\scriptscriptstyle\scriptscriptstyle{#1}}
\!\int}
\def\XXint#1#2#3{{\setbox0=\hbox{$#1{#2#3}{\int}$ }
\vcenter{\hbox{$#2#3$ }}\kern-.6\wd0}}

\def\dashint{\Xint-}

\newcommand{\gsim}{\mathrel{\raisebox{0.3ex}{$>$}\kern-0.75em\raisebox{-0.5ex}{$\sim$}}}

\ifpdf
\fi

\title{Cohomological properties and Hermitian metrics of complex non-Kähler manifolds\thanks{
These notes expand on the course ``Cohomological properties and Hermitian metrics of complex non-Kähler manifolds'', delivered by the authors at the summer school on ``Singular Kählerian metrics and Hermitian geometry'', held at the Rényi Institute in Budapest, August 11-15, 2025. They were also conceived in view of the first edition of the Critical Math ``Cohomological and Homotopical Methods in Complex Geometry'', held on August 18-22, 2025, in the Black Forest.
The authors warmly thank the organizers and participants of both events for their kind invitations and for contributing to a stimulating and positive environment. 
We are especially grateful to Adriano Tomassini, Maurizio Parton and Victor Vuletescu, Simone Calamai and Cristiano Spotti, Francesco Pediconi, Valentino Tosatti, with whom many of the results discussed here have been developed, as well as all our other past and future collaborators.\\
The authors were partially supported by the PRIN2022 project \emph{``Real and Complex Manifolds: Geometry and Holomorphic Dynamics''} (code 2022AP8HZ9), and by GNSAGA of INdAM.
}}
\author{Daniele Angella\thanks{Dipartimento di Matematica e Informatica, Università di Firenze, viale Morgagni 67/A, 50134 Firenze, Italy.
Email: \href{mailto:daniele.angella@unifi.it}{\texttt{daniele.angella (at) unifi.it}}
}
\and
Nicoletta Tardini\thanks{Dipartimento di Scienze Matematiche, Fisiche e Informatiche, Università di Parma, Parco Area delle Scienze 53/A, 43124 Parma, Italy.
Email: \href{mailto:nicoletta.tardini@unipr.it}{\texttt{nicoletta.tardini (at) unipr.it}}
}}

\date{\today}

\begin{document}

\maketitle


\begin{abstract}
We give a partial account of some problems concerning cohomological invariants and metric properties of complex non-Kähler manifolds.
In \Cref{sec:holomorphic}, we review some basic notions and tools in complex geometry.
In \Cref{sec:cohomologies}, we investigate cohomological properties and topological aspects of complex manifolds. We introduce various complex cohomologies and their properties, focusing in particular on the structure of the double complex of differential forms. We also discuss the $\partial\overline\partial$-lemma and provide a numerical characterization in terms of Bott–Chern cohomology.
In \Cref{sec:metric}, we introduce Hermitian metrics and set the problems of studying canonical metrics. In particular, we review partial results on the classification of complex threefolds endowed with locally conformally Kähler metrics.
In \Cref{sec:chern-yamabe}, we focus on analytic problems related to the geometry of the Chern connection on Hermitian manifolds, such as the existence of metrics with constant Chern scalar curvature.
In \Cref{sec:flows}, we introduce geometric flows for Hermitian metrics, with particular attention to the behavior of the Chern–Ricci flow on Inoue surfaces.
At the end of each section, we collect some problems aimed at filling in details skipped during the lecture or extending the discussion toward other research directions.
The final \Cref{sec:open} gathers, in a list that is by no means comprehensive, some open problems in complex non-Kähler geometry which the authors, from their admittedly biased perspective, find particularly interesting.
\end{abstract}

\ifdefined\frommain
\else
    \bibliographystyle{alpha}
    \bibliography{biblio}
\fi

\tableofcontents

\section{The holomorphic landscape}\label{sec:holomorphic}

In this preliminary lecture, we introduce the basic tools that will be needed in the subsequent sections to study the geometry and topology of holomorphic manifolds, as well as the geography of their mysterious ``boundary regions'' \cite{stelzig-CoMa}.
There are many excellent books on complex geometry, including \cite{demailly-agbook, voisin-FR, voisin-1, voisin-2, griffiths-harris, morrow-kodaira, zheng, lee-complex}, as well as more specialized references such as \cite{demailly-analytic, kodaira-book, ma-marinescu, barth-hulek-peters-vandeven, bertin-demailly-illusie-peters, popovici-book}.

\subsection{Holomorphic manifolds and holomorphic maps}
A \emph{holomorphic manifold} $X^n$ of complex dimension $n$ is a topological space that is Hausdorff and second-countable, and such that every point has a neighborhood $U$ homeomorphic to an open subset of $\mathds{C}^n$. Such a homeomorphism is called a \emph{local holomorphic chart}, and is denoted  
\[
\varphi_U \colon U \xrightarrow{\sim} \varphi_U(U) \subseteq \mathds{C}^n.
\]  
In the definition, we also require that the transition maps between overlapping charts,  
\[
\varphi_V \circ \varphi_U^{-1}\big|_{\varphi_U(U \cap V)} \colon \varphi_U(U \cap V) \to \varphi_V(U \cap V),
\]  
are biholomorphic, meaning that they are holomorphic bijections with holomorphic inverses.

We briefly recall that a function between open subsets of complex Euclidean spaces is called \emph{holomorphic} if it is of class $\mathcal{C}^1$ and its differential is $\mathds{C}$-linear at every point.  
The theory of holomorphic functions in several complex variables shares many features with the one-variable theory, particularly from the analytic point of view. However, it also exhibits significant differences, especially concerning geometric aspects.

Following \cite{debartolomeis}, we prefer to use the term \emph{holomorphic manifold}, rather than the more common \emph{complex manifold}, as these spaces naturally arise as objects in the category of holomorphic maps.
A \emph{holomorphic map} $f \colon X^n \to Y^m$ between holomorphic manifolds is a function that is locally given by holomorphic functions. More precisely, for any point $x \in X$, any local holomorphic chart $(U, \varphi_U)$ of $X$ with $x \in U$, and any local holomorphic chart $(V, \psi_V)$ of $Y$ with $f(x) \in V$, the composition  $\psi_V \circ f \circ \varphi_U^{-1} \colon \varphi_U(U \cap f^{-1}(V)) \to \psi_V(V)$ is holomorphic between open subsets of the Euclidean spaces $\mathds{C}^n$ and $\mathds{C}^m$.

\subsection{Holomorphic vector bundles and line bundles}

One key aspect of holomorphic functions between Euclidean spaces, in one or several complex variables, is that the homogeneous Cauchy-Riemann system is elliptic.  
As a consequence, the maximum principle holds.  
In particular, it follows that the space of holomorphic functions $f \colon X \to \mathds{C}$ on a \emph{compact} holomorphic manifold $X$ consists only of constant functions.
For this reason, the geometry of compact holomorphic manifolds is often studied through the sections of holomorphic line bundles.

A \emph{holomorphic vector bundle} of rank $k$ over $X$ is given by a holomorphic surjective map  $\pi \colon E \to X$, where each fibre $E_x := \pi^{-1}(x)$ is a $k$-dimensional complex vector space.  
Moreover, the bundle is required to be \emph{locally trivial}, meaning that for every point $x \in X$ there exists an open neighborhood $U \ni x$ and a trivialization $\psi_U \colon \pi^{-1}(U) \xrightarrow{\sim} U \times \mathds{C}^k$, such that the restriction to each fibre yields a linear isomorphism $\psi_U|_{E_x} \colon E_x \xrightarrow{\sim} \{x\} \times \mathds{C}^k$.
On the overlap of two trivializations, the corresponding {\em transition function}  
\[
\psi_V \circ \psi_U^{-1}|_{\psi_U(U \cap V)} \colon (U \cap V) \times \mathds{C}^k \to (U \cap V) \times \mathds{C}^k
\]  
is of the form  $(x, v) \mapsto (x, g(x)(v))$, where $g(x) \in {\rm GL}(\mathds{C}^k)$ is holomorphic in $x$.
There is a natural notion of morphism between holomorphic vector bundles: it is a holomorphic map between the total spaces that commutes with the projections and restricts to linear maps between fibres.
A bundle is said to be \emph{trivial} if it is isomorphic to the product bundle $X \times \mathds{C}^k$.
The space of holomorphic \emph{sections} of $E$, that is, holomorphic maps $s \colon X \to E$ satisfying $\pi \circ s = \mathrm{id}_X$, is denoted by $H^0(X, E)$.

In particular, when $k = 1$, the bundle is called a \emph{holomorphic line bundle}.  
Holomorphic line bundles over $X$, up to isomorphism, form an abelian group under the tensor product, known as the \emph{Picard group} and denoted by ${\rm Pic}(X)$.
Let us briefly revisit the definition of a holomorphic line bundle.  
There exists an open covering $\{U_i\}_{i \in I}$ of $X$ together with trivializations $\psi_i \colon \pi^{-1}(U_i) \xrightarrow{\sim} U_i \times \mathds{C}$, such that the transition functions $\psi_j \circ \psi_i^{-1}\big|_{\psi_i(U_i \cap U_j)} \colon (U_i \cap U_j) \times \mathds{C} \to (U_i \cap U_j) \times \mathds{C}$ are of the form  
\[
(x, v) \mapsto (x, g_{ij}(x) \cdot v),
\]  
where each $g_{ij} \colon U_i \cap U_j \to \mathds{C}^*$ is a holomorphic function.
The collection of transition functions $\{g_{ij}\}$ satisfies the \emph{cocycle conditions}:
\[
g_{ii} = 1 \quad \text{on } U_i, \qquad \text{and} \qquad g_{ki} \cdot g_{jk} \cdot g_{ij} = 1 \quad \text{on } U_i \cap U_j \cap U_k.
\]
When a holomorphic line bundle is trivial, there exist holomorphic functions $\{g_i \colon U_i \to \mathds{C}^*\}_i$ such that
\[
g_{ij} = g_j \cdot g_i^{-1} \quad \text{on } U_i \cap U_j.
\]
Conversely, given an open covering $\{U_i\}_i$ of $X$ and a collection of nowhere vanishing holomorphic functions $\{g_{ij} \colon U_i \cap U_j \to \mathds{C}^*\}_{i,j}$ satisfying the cocycle conditions, one can construct a holomorphic line bundle over $X$.  
Such a bundle is trivial if and only if there exist functions $\{g_i\}_i$ as above such that $g_{ij} = g_j \cdot g_i^{-1}$.

This correspondence can be summarized by stating that the Picard group is isomorphic to the first \v{C}ech cohomology group $\check{H}^1(X, \mathcal{O}_X^*)$ with values in the sheaf of germs of invertible ({\itshape i.e.}, nowhere vanishing) holomorphic functions, which, in turn, is naturally isomorphic to the sheaf cohomology group $H^1(X, \mathcal{O}_X^*)$.
By considering the \emph{exponential sequence} of sheaves on $X$,
\[
0 \to \underline{\mathds{Z}}_X \stackrel{2\pi \sqrt{-1}}{\to} \mathcal{O}_X \stackrel{\exp}{\to} \mathcal{O}_X^* \to 0,
\]
and passing to the associated long exact sequence in cohomology, we obtain a map
\[
c_1 \colon {\rm Pic}(X) \simeq H^1(X, \mathcal{O}_X^*) \to H^2(X, \mathds{Z}),
\]
which is called the \emph{first Chern class}.

Central to classification results in complex geometry, see {\itshape e.g.} \cite{barth-hulek-peters-vandeven, ueno-LNM}, is the \emph{canonical bundle} \( K_X \), defined as the determinant of the holomorphic cotangent bundle, together with its tensor powers, the \emph{pluricanonical bundles}. A fundamental invariant is the \emph{Kodaira dimension}, given by
\[
\mathrm{Kod}(X) := \limsup_{\ell \to \infty} \frac{\log \dim H^0(X, K_X^{\otimes \ell})}{\log \ell} \in \{-\infty, 0, 1, \dots, \dim X\}.
\]
Thanks to work by Serre and Siegel, this invariant can also be interpreted as the maximal rank of the pluricanonical maps, away from their base locus.

In complex dimension $1$, that is, for Riemann surfaces, the Uniformization Theorem provides a complete classification: every complex curve is biholomorphic to a quotient of one of three model surfaces: the open unit disk, the complex plane, or the Riemann sphere.
In complex dimension $2$, the Enriques–Kodaira–Siu classification is well understood, except for the so-called class VII: see \cite{barth-hulek-peters-vandeven} for a comprehensive overview, and \cite{teleman-LNM} for recent developments.
In higher dimensions, we only mention the Minimal Model Program, which plays a central role in the birational classification of algebraic varieties, as well as its analytic counterpart with Ricci flow \cite{song-tian}.

\subsection{Almost-complex structures}
Any holomorphic manifold $X$ of complex dimension $n$ is, in particular, also a differentiable manifold of real dimension $2n$.  
Indeed, given a local holomorphic chart $\varphi \colon U \xrightarrow{\sim} \varphi(U) \subseteq \mathds{C}^n$ with coordinates $\varphi = (z^1, \ldots, z^n)$, one can write each complex coordinate as $z^i = x^i + \sqrt{-1} y^i$, where $x^i, y^i$ are real-valued functions on $U$.
Then the tuple $(U, (x^1, y^1, \ldots, x^n, y^n))$ defines a local differentiable chart for $X$.  
In particular, $\left\{ \sfrac{\partial}{\partial x^i}, \sfrac{\partial}{\partial y^i} \right\}_{i=1}^n$ forms a local frame for the real tangent bundle $TX$.
Since every complex linear map, when regarded as a real linear map, has non-negative determinant, it follows that the underlying differentiable manifold of $X$ is orientable.

We have a well-defined endomorphism $J$ of the tangent bundle $TX$, locally given by
$$ J \sfrac{\partial}{\partial x^i} = \sfrac{\partial}{\partial y^i}, \qquad J \sfrac{\partial}{\partial y^i} = -\sfrac{\partial}{\partial x^i}. $$
This endomorphism satisfies $J^2=-{\rm id}$ and encodes the pointwise linear complex structure on the tangent spaces, varying smoothly with the point. An endomorphism $J\in {\rm End}(TX)$ satisfying $J^2=-{\rm id}$ is called an {\em almost-complex structure}.

Conversely, an almost complex structure $J$ on a differentiable manifold $X$ is locally induced by holomorphic coordinates if and only if the bundle of $\sqrt{-1}$-eigenspaces, defined by  
\[
T^{1,0}_x X := \{ v - \sqrt{-1} Jv : v \in T_xX \} \subset T_xX \otimes_{\mathds{R}} \mathds{C},
\]  
is involutive. This is the content of the celebrated Newlander–Nirenberg theorem \cite{newlander-nirenberg}.
Equivalently, this condition holds if and only if $[V^{1,0},W^{1,0}]^{0,1}=0$ for any vector fields $V$ and $W$, where $V=V^{1,0}+V^{0,1}$ denotes the decomposition with respect to the decomposition $TX \otimes_{\mathds R}\mathds C = T^{1,0}X \oplus T^{0,1}X$ with $T^{0,1}X = \overline{T^{1,0}X}$. This is also equivalent to the vanishing of the so-called {\em Nijenhuis tensor} $N_J(V,W)=[V,W]+J[JV,W]+J[V,JW]-[JV,JW]$.

In even real dimensions greater than four, there are no known examples of orientable differentiable manifolds that admit almost-complex structures but do not admit any integrable complex structure.
For instance, it is an open problem whether the six-dimensional sphere $S^6$ admits an integrable complex structure, see \cite{agricola-bazzoni-goertsches-konstantis-rollenske} and the references therein. See also \cite{albanese-milivojevic} for a conjecture by Sullivan on the minimal sum of Betti numbers of a compact complex $n$-fold, with $n \geq 3$ being four. Further discussions about this problem will be collected in \Cref{sec:open}.

\subsection{Complex algebraic geometry and K\"ahler geometry}
The first examples of {\em compact} holomorphic manifolds are the complex torus $\sfrac{\mathds C^n}{\mathds Z^n}$ and the {\em complex projective space} $\mathds CP^n = \sfrac{\mathds C^{n+1}\setminus 0}{\mathds C\setminus 0}$. Another fundamental construction is as follows. Consider a homogeneous polynomial in $n+1$ variables with no multiple roots. Its zero set in $\mathds CP^n$ defines a compact holomorphic manifold of complex dimension $n-1$. More generally, the zero set of a finite number of homogeneous polynomials defines a compact holomorphic manifold, provided that the polynomials are sufficiently generic to assure smoothness. Many geometric properties of such manifolds are indeed encoded in the ideal generated by its defining polynomials (see {\itshape e.g.} \cite{hubsch}), which explains the name {\em algebraic projective manifolds}. For example, a smooth hypersurface $X$ of degree $d$ in $\mathds CP^n$ has canonical bundle isomorphic to $\mathcal O_X(-n-1+d)$. In particular, for $d=n+1$, the hypersurface $X$ is a Calabi-Yau manifold.

The examples $X$ discussed above share a common property: there exists a Hermitian metric $g$ that osculates to order $2$ the standard Hermitian inner product of $\mathds C^n$. Specifically, this means that for every point $x \in X$, there exist local holomorphic coordinates $(z^1, \ldots, z^n)$ on a neighborhood $U$ such that $g = \sum_{i,j=1}^n (\delta_{ij}+o(z)) \, dz^i \otimes d\bar z^j$ at $x$.
Such metrics are known as {\em K\"ahler metrics}, see \cite{schouten-vandantzig, kahler}, since \cite{weil}.
Equivalently, a Hermitian metric is K\"ahler if and only if its associated $2$-form $\omega$ is symplectic, meaning that $d\omega=0$.
Indeed, on $\mathds CP^n$ with homogeneous coordinates $[z^0:z^1:\cdots:z^n]$, the Fubini-Study metric \cite{fubini, study}, defined as $\omega_{FS}= \sfrac{1}{4\pi}dd^c\log(\sum_i|z^i|^2)$, is a K\"ahler metric. This metric naturally induces a K\"ahler metric on any submanifolds.
More precisely, K\"ahler geometry is a sort of transcendental analogue \cite{demailly-analytic} of algebraic geometry. More precisely, it is well-known \cite{kodaira} that a compact K\"ahler manifold is an algebraic projective manifold if and only if its associated $2$-form $\omega$ defines a rational cohomology class $[\omega]\in H^2(X,\mathds Q)$.
K\"ahler geometry lies at the intersection of complex, Riemannian, and symplectic geometries. Each of these perspectives provides specific tools, and their compatibility allows for easily switching between these points of view. This interplay makes K\"ahler geometry nearly as powerful as algebraic geometry itself.

Since K\"ahler geometry ``represents a perfect synthesis of the Symplectic and the Holomorphic worlds'', it is meaningful to perform ``a sort of chemical analysis of symplectic and holomorphic contribution [...] in order to better understand the role of the different components of the theory'' \cite{debartolomeis-tomassini}
For this reason, we are interested in investigating the broad class of complex {\em non-K\"ahler} manifolds.

\ifdefined\frommain
\else
    \bibliographystyle{alpha}
    \bibliography{biblio}
\fi

\subsection{Problems}

\begin{exercise}
Prove that every almost-complex manifold is orientable.
\end{exercise}

\begin{exercise}
Show that the complex projective space $\mathds{CP}^n$ is a complex manifold.
\end{exercise}

\begin{exercise}
Consider the sphere $S^2 \subseteq \mathds{R}^3$. Let $\wedge$ denote the vector cross product in $\mathds{R}^3$. Recall that, by identifying $\mathds{R}^3$ with the space of imaginary quaternions $\mathrm{Im}(H)$, the cross product can be expressed as $x \wedge y = \mathrm{Im}(x \cdot y)$, where the product is taken in the quaternion algebra $H$.
Observe that the tangent space at a point $p \in S^2$ is given by $T_p S^2 = \{p\}^\perp$. Prove that the endomorphism
$$ J_p(v) = v \wedge p $$
for $v \in T_p S^2$ defines an almost-complex structure on $S^2$, and that it is integrable. Finally, prove that $S^2$ is biholomorphic to $\mathds{CP}^1$.
\end{exercise}

\begin{exercise}[{\cite{kirchhoff}}]
Consider the sphere $S^6 \subseteq \mathds{R}^7 = \Im O$, where $O$ denotes the algebra of octonions.  
Define an endomorphism
$$ J_p(v) := \Im(v \cdot p), $$
for $v \in T_p S^6$.
Show that $J_p$ defines an almost-complex structure on $S^6$.
Prove that the failure of associativity of octonions implies that this almost-complex structure is \emph{not} integrable.
\end{exercise}

\begin{proof}[Remark]
By a theorem of Borel and Serre \cite{borel-serre}, the only spheres that admit almost-complex structures are those of dimension $2$ and $6$, see, for instance, \cite{konstantis-parton}. Regarding the existence of an \emph{integrable} almost-complex structure on the six-dimensional sphere, an open problem known as the \emph{Hopf problem}, we refer to \cite{agricola-bazzoni-goertsches-konstantis-rollenske}. Extending the exercise, note that there are no complex structures on the six-dimensional sphere that are compatible with the round metric \cite{blanchard, lebrun}, see also \cite{ferreira}.
\end{proof}

\begin{exercise}
On $\mathds{C}^n \setminus \{0\}$, consider the action of $\mathds{Z}$ generated by the holomorphic contraction
$$(z_1, \dots, z_n) \mapsto \left(\tfrac{1}{2}z_1, \dots, \tfrac{1}{2}z_n\right).$$
Show that the quotient $\sfrac{(\mathds{C}^2 \setminus \{0\})}{\mathds{Z}}$ is a compact complex manifold, called a \emph{Hopf manifold}, and that it is diffeomorphic to $S^1 \times S^{2n-1}$.
\end{exercise}

\ifdefined\frommain
\else
    \bibliographystyle{alpha}
    \bibliography{biblio}
\fi

\cleardoublepage

\section{Cohomological invariants of holomorphic manifolds}\label{sec:cohomologies}

In this section, we introduce the double complex of differential forms and its associated cohomologies.  
For recent and in-depth developments on the double complex of complex manifolds and its homotopical and cohomological invariants, we refer to \cite{stelzig-CoMa, stelzig-JLMS, stelzig-AdvMath22, stelzig-AdvMath25} and other works by Jonas Stelzig.  
We focus in particular on the cohomological decomposition property known as the \(\partial \overline{\partial}\)-Lemma property, providing a numerical characterization following \cite{angella-tomassini-InvMath}, and studying its behavior under natural geometric constructions.
Compact quotients of Lie groups provide a rich source of examples where explicit computations can be carried out.

\subsection{Double complex of forms}
The action of an almost-complex structure $J$ on a differentiable manifold $X$ induces a decomposition of the complexified tangent bundle,
\[
TX \otimes_{\mathds{R}} \mathds{C} = T^{1,0}X \oplus T^{0,1}X,
\]
into the eigenspaces of $J$ corresponding to the eigenvalues $\sqrt{-1}$ and $-\sqrt{-1}$, respectively.  
This decomposition extends naturally to its dual, the complexified cotangent bundle and, more generally, to all its exterior powers.
More precisely, the bundle of differential forms decomposes into bigraded components as
\[
\wedge^k T^*X \otimes_{\mathds R}\mathds C = \bigoplus_{p+q=k} \wedge^{p,q}X, \text{ where } \wedge^{p,q}X:=\wedge^p T^{*\,1,0}X \otimes \wedge^q T^{*\,0,1}X.
\] Denote by $\Omega^{p,q}(X)$ the space of the corresponding smooth sections. Accordingly, the exterior differential $d \colon \Omega^{p,q}(X) \to \Omega^{p+q+1}(X)$ on $(p,q)$-differential forms decomposes into components. In general, the exterior differential has four components:
\[
d \colon \Omega^{p,q}(X) \to \Omega^{p+2,q-1}(X) \oplus \Omega^{p+1,q}(X) \oplus \Omega^{p,q+1}(X) \oplus \Omega^{p-1,q+2}(X).
\]
The integrability of $J$ is equivalent to the condition that only the following differential operators remain non-zero:
\[
\partial \colon \Omega^{p,q}(X) \to \Omega^{p+1,q}(X), \qquad \overline\partial \colon \Omega^{p,q}(X) \to \Omega^{p,q+1}(X).
\]
In this case, the equation $d^2=0$ assures that
\[
\partial^2=\partial\overline\partial+\overline\partial\partial=\overline\partial^2=0.
\]
In other words, $(\Omega^{\bullet,\bullet}(X),\partial,\overline\partial)$ has the
structure of a {\em double complex}.

The following theorem describes the structure of a double complex in terms of its elementary components.

\begin{theorem}[{\cite{stelzig-JLMS, khovanov-qi}}]
Every double complex is a direct sum of indecomposable double complexes, each of which is either isomorphic to a square or to a zigzag.
\end{theorem}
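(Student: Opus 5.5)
A double complex is a bigraded vector space $A=\bigoplus A^{p,q}$ with commuting, anticommuting differentials $\partial,\overline\partial$ of bidegrees $(1,0)$ and $(0,1)$. I would prove the statement under the finiteness assumption that each $A^{p,q}$ is finite-dimensional and only finitely many are nonzero; this covers finite-dimensional models, and the case of $\Omega^{\bullet,\bullet}(X)$ needs a separate argument, discussed at the end. The idea is to view $A$ as a module over the graded algebra $R=\mathds{C}[\partial,\overline\partial]/(\partial^2,\overline\partial^2,\partial\overline\partial+\overline\partial\partial)$. This is a four-dimensional exterior algebra, hence a self-injective (Frobenius) algebra, and its socle is spanned by $\partial\overline\partial$.

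\textbf{Step 1: split off squares.} Pick a bihomogeneous $a\in A^{p,q}$ with $\partial\overline\partial a\neq 0$. Then $a,\partial a,\overline\partial a,\partial\overline\partial a$ span a graded free submodule $S\cong R$, which is a square. Since $R$ is self-injective in the graded category, this graded free module is injective, so $S$ has a bigraded complement. Concretely, I would choose a bigraded linear functional $\lambda$ with $\lambda(\partial\overline\partial a)=1$ and use it to build an explicit bigraded $R$-linear retraction $A\to S$. Iterating, which terminates by finite-dimensionality, gives $A=\bigoplus(\text{squares})\oplus A'$ with $\partial\overline\partial=0$ on $A'$.

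\textbf{Step 2: decompose modules with $\partial\overline\partial=0$.} Such a module is a bigraded representation in which every vector is killed by $\partial\overline\partial$. Its indecomposables are the string modules of a linear ``staircase'' quiver, in which arrows $\partial$ and $\overline\partial$ alternate along the antidiagonal directions. These string modules are precisely the zigzags. I would prove the decomposition by hand, using induction on total dimension:

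\begin{enumerate}
\item Choose an element $x$ generating a maximal-length zigzag. For example, start in the highest total degree in which $\ker\partial\cap\ker\overline\partial$ is not the whole image, and extend as far as possible by repeatedly taking $\partial$- and $\overline\partial$-preimages and images.
\item Show that this zigzag $Z$ is a direct summand, by constructing a bigraded complement degree by degree.
\end{enumerate}

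Alternatively, one can cite the classification of indecomposable modules over string (gentle) algebras due to Butler–Ringel and Wald–Waschbüsch, together with Krull–Schmidt.

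\textbf{Main obstacle.} The hard step is Step 2.2: showing that a maximal zigzag splits off compatibly with both differentials. I would handle it with the standard ``functorial filtration'' argument for string algebras. For each bidegree one filters $A^{p,q}$ by the subspaces $\operatorname{im}\partial$, $\ker\overline\partial$, $\overline\partial^{-1}(\operatorname{im}\partial)$, and so on, generated by words in $\partial^{\pm1}$ and $\overline\partial^{\pm1}$. Each zigzag shape then contributes a subquotient whose dimension counts that shape's multiplicity, and lifting bases of these subquotients produces the decomposition.

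For infinite-dimensional double complexes such as $\Omega^{\bullet,\bullet}(X)$, Step 1 still works with Zorn's lemma, because $R$ is self-injective. For Step 2, the functorial-filtration method extends provided each bidegree is handled via a choice of complements of the filtration pieces. This uses only linear algebra, so it needs no finiteness. I expect verifying that the lifted bases glue into a genuine direct sum decomposition to be the most delicate point.
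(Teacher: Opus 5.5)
The notes do not prove this statement; they only cite Stelzig and Khovanov--Qi, so your argument has to stand on its own. Step 1 does. Read $R$ as the exterior algebra, i.e.\ the free algebra on $\partial,\overline\partial$ modulo your relations; in the commutative ring $\mathds{C}[\partial,\overline\partial]$ the relation $\partial\overline\partial+\overline\partial\partial=0$ would force $\partial\overline\partial=0$. Step 1 also needs neither finiteness nor Zorn's lemma. Let $B^{p,q}$ be a complement of $\ker\partial\overline\partial$ in $A^{p,q}$, and set $S=B+\partial B+\overline\partial B+\partial\overline\partial B$. Then $S$ is graded free, because the map from $R\otimes B$ is injective on the socle. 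Since $R$ is Frobenius and finite-dimensional, $S$ is injective among bigraded modules, so $A=S\oplus A'$, and $\partial\overline\partial A'\subseteq S\cap A'=0$.

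\textbf{The gap is Step 2 outside finite dimension.} This is exactly the case of $\Omega^{\bullet,\bullet}(X)$, whose components are infinite-dimensional. Krull--Schmidt, the Butler--Ringel/Wald--Waschb\"usch classification and the functorial-filtration arguments are proved for finite-dimensional modules. For an infinite-dimensional module, the existence of \emph{any} decomposition into indecomposables is precisely what must be shown, so ``it uses only linear algebra'' is not an argument. Your by-hand route does not supply it either. When $\partial\overline\partial=0$, a single element only generates $\langle x,\partial x,\overline\partial x\rangle$, so zigzags of length at least four need several generators. Nothing in your sketch shows that a greedy choice of preimages produces a direct summand.

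\textbf{A repair that avoids string algebras.} When $\partial\overline\partial=0$ one has $\im\partial+\im\overline\partial\subseteq K:=\ker\partial\cap\ker\overline\partial$. Choose complements $A^{p,q}=C^{p,q}\oplus K^{p,q}$. Then $A$ is literally the complex attached to the quiver representation with sources $C^{p,q}$, sinks $K^{p,q}$, and arrows $\partial\colon C^{p,q}\to K^{p+1,q}$ and $\overline\partial\colon C^{p,q}\to K^{p,q+1}$. Conversely, every representation defines a double complex with $\partial\overline\partial=0$, since both maps vanish on the sinks. This correspondence respects direct sums and sends interval representations to zigzags, dots included. For fixed $k$ the relevant vertices form the path $\cdots\to K^{p,k+1-p}\leftarrow C^{p,k-p}\to K^{p+1,k-p}\leftarrow C^{p+1,k-p-1}\to\cdots$. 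So a bounded $A$ is a representation of a finite disjoint union of type $A$ quivers with no relations and no bands.

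Gabriel's theorem then gives the finite-dimensional case. Because these path algebras are representation-finite, every representation of arbitrary dimension is a direct sum of finite-dimensional indecomposables (Auslander; Ringel--Tachikawa). This settles $\Omega^{\bullet,\bullet}(X)$.

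Do keep a boundedness hypothesis. On an unbounded complex, take $x_p\in A^{p,-p}$ and $y_p\in A^{p+1,-p}$ for $p\in\mathds{Z}$, with $\partial x_p=y_p$ and $\overline\partial x_p=y_{p-1}$. These span an indecomposable infinite zigzag, so ``every double complex'' in the notes should be read as ``every bounded double complex''.
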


In the following diagrams, we depict only the non-zero morphisms, corresponding to $\partial$ (horizontal arrows) and $\overline\partial$ (vertical arrows). A \emph{square} is a double complex consisting of anticommuting isomorphisms:
$$
\xymatrix{
\mathds{C} \ar[r]^\partial & \mathds{C} \\
\mathds{C} \ar[r]^\partial \ar[u]^{\overline\partial} & \mathds{C} \ar[u]_{\overline\partial}
}
$$
where all maps are the identity up to a sign.
A \emph{zigzag} is a double complex concentrated in at most two antidiagonals, where all non-zero vector spaces are isomorphic to $\mathds{C}$, all non-zero maps are the identity, and all non-zero vector spaces are connected by a chain of maps:
$$
\xymatrix{
\mathds{C} & \\
\mathds{C} \ar[u] \ar[r] & \mathds{C}
}
\quad \text{or} \quad
\xymatrix{
\mathds{C} \ar[r] & \mathds{C} \\
& \mathds{C} \ar[u]
}
\quad \text{or} \quad
\xymatrix{
\mathds{C} \ar[r] & \mathds{C} & \\
& \mathds{C} \ar[u] \ar[r] & \mathds{C}
}
\quad \text{or} \quad
\xymatrix{
\mathds{C} & \\
\mathds{C} \ar[u] \ar[r] & \mathds{C} \\
& \mathds{C} \ar[u]
}
$$
The total dimension of a zigzag, namely the number of vertices, is called its \emph{length}. A zigzag of length one is also called a \emph{dot}, and corresponds to a one-dimensional double complex with all differentials equal to zero.

For example, based on the results by Ugarte \cite{ugarte-GeomDed} on the Fr\"olicher spectral sequence of a hypothetical complex structure on the six-dimensional sphere, one can construct the double complex associated with such a structure \cite{angella-DGA}, as shown in \Cref{fig:S6}.

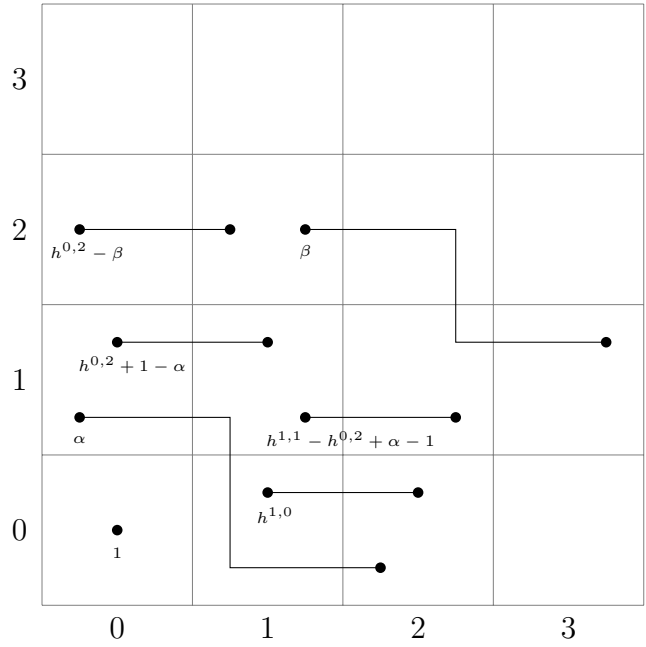
\begin{figure}[p]
\begin{center}
\begin{tikzpicture}
\newcommand\un{2}

\draw[help lines, step=\un] (0,0) grid (4*\un,4*\un);

\foreach \x in {0,...,3}
  \node at (\un*.5+\un*\x,-.3) {\x};
\foreach \y in {0,...,3}
  \node at (-.3,\un*.5+\un*\y) {\y};

\coordinate (A) at (0*\un+1/2*\un, 0*\un+1/2*\un);
\coordinate (B) at (0*\un+1/4*\un, 1*\un+1/4*\un);
\coordinate (C) at (0*\un+1/2*\un, 1*\un+3/4*\un);
\coordinate (D) at (0*\un+1/4*\un, 2*\un+1/2*\un);
\coordinate (E) at (1*\un+1/2*\un, 0*\un+3/4*\un);
\coordinate (F) at (1*\un+1/4*\un, 0*\un+1/4*\un);
\coordinate (G) at (1*\un+1/4*\un, 1*\un+1/4*\un);
\coordinate (H) at (1*\un+3/4*\un, 1*\un+1/4*\un);
\coordinate (I) at (1*\un+1/2*\un, 1*\un+3/4*\un);
\coordinate (L) at (1*\un+1/4*\un, 2*\un+1/2*\un);
\coordinate (M) at (1*\un+3/4*\un, 2*\un+1/2*\un);
\coordinate (N) at (2*\un+1/2*\un, 0*\un+3/4*\un);
\coordinate (O) at (2*\un+1/4*\un, 0*\un+1/4*\un);
\coordinate (P) at (2*\un+3/4*\un, 1*\un+1/4*\un);
\coordinate (Q) at (2*\un+3/4*\un, 1*\un+3/4*\un);
\coordinate (R) at (2*\un+3/4*\un, 2*\un+1/2*\un);
\coordinate (S) at (3*\un+3/4*\un, 1*\un+3/4*\un);

\newcommand{\raggio}{1*\un pt}
\fill (A) circle (\raggio);
\fill (B) circle (\raggio);
\fill (C) circle (\raggio);
\fill (D) circle (\raggio);
\fill (E) circle (\raggio);
\fill (H) circle (\raggio);
\fill (I) circle (\raggio);
\fill (L) circle (\raggio);
\fill (M) circle (\raggio);
\fill (N) circle (\raggio);
\fill (O) circle (\raggio);
\fill (P) circle (\raggio);
\fill (S) circle (\raggio);

\draw (B) -- (G) -- (F) -- (O);
\draw (C) -- (I);
\draw (D) -- (L);
\draw (E) -- (N);
\draw (H) -- (P);
\draw (M) -- (R) -- (Q) -- (S);

\begingroup\makeatletter\def\f@size{6}\check@mathfonts
\node at (0*\un+1/2*\un, 0*\un+1/2*\un-.3) {$1$};
\node at (0*\un+1/4*\un, 1*\un+1/4*\un-.3) {$\alpha$};
\node at (0*\un+1/2*\un+.2, 1*\un+3/4*\un-.3) {$h^{0,2}+1-\alpha$};
\node at (0*\un+1/4*\un+.1, 2*\un+1/2*\un-.3) {$h^{0,2}-\beta$};
\node at (1*\un+1/2*\un+.1, 0*\un+3/4*\un-.3) {$h^{1,0}$};
\node at (1*\un+3/4*\un+.6, 1*\un+1/4*\un-.3) {$h^{1,1}-h^{0,2}+\alpha-1$};
\node at (1*\un+3/4*\un, 2*\un+1/2*\un-.3) {$\beta$};
\endgroup

\end{tikzpicture}
\end{center}
\label{fig:S6}
\caption{
\small
Main structure of the double complex associated with a hypothetical complex structure on the $6$-dimensional sphere.
The labels indicate the multiplicity of the respective objects, and $\alpha$, $h^{0,2}$, $\beta$, $h^{1,0}$, $h^{1,1}$ denote undetermined non-negative integers.
In this diagram, infinite families of squares, as well as arrows resulting from symmetries have been omitted (complex conjugation yields a symmetry with respect to the bottom-left/top-right diagonal, and Serre duality yields a symmetry with respect to the bottom-right/top-left diagonal).
}
\end{figure}

\subsection{Complex cohomologies}

Associated to a double complex, several cohomologies can be defined. As we will see in \Cref{sec:hodge}, on {\em compact} holomorphic manifolds, they are always finite-dimensional.

The {\em Dolbeault cohomology}
\[
H^{\bullet,\bullet}_{\overline\partial}(X)=\frac{\ker\overline\partial}{\im\overline\partial}
\]
is the cohomology of the sheaf $\mathcal O_X$ of germs of holomorphic functions; its conjugate $H^{\bullet,\bullet}_{\partial}(X)$ is defined similarly.
In terms of indecomposable components, the Dolbeault cohomology is computed by removing all vertical arrows along with their endpoints from the double complex diagram, and then counting the remaining vertices. Similarly, the conjugate Dolbeault cohomology is obtained by erasing the horizontal arrows. In particular, note that squares do not contribute to either cohomology, while dots always contribute. Also note that zigzags of odd length do not contribute to the difference between Dolbeault and de Rham cohomology.

The {\em Bott-Chern cohomology} \cite{bott-chern} is defined as
\[
H^{\bullet,\bullet}_{BC}(X)= \frac{\ker \partial \cap \ker \overline\partial}{\im\partial\overline\partial}, \]
and its ``dual'', the {\em Aeppli cohomology} \cite{aeppli}, as 
\[
H^{\bullet,\bullet}_A(X)=\frac{\ker\partial\overline\partial}{\im\partial+\im\overline\partial}.
\]
For a sheaf hypercohomology interpretation of Bott-Chern and Aeppli cohomologies, see \cite{demailly-agbook, schweitzer}.  
In terms of indecomposable elements in the double complex diagram, Bott-Chern cohomology counts the corner elements that may have incoming arrows, except for the squares:  
\[
\xymatrix{
\circ \ar@{-->}[r] & \bullet \\
& \circ \ar@{-->}[u]
}
\]
Dually, Aeppli cohomology counts the corner elements that may have outgoing arrows, except for the squares:  
\[
\xymatrix{
\circ & \\
\bullet \ar@{-->}[r] \ar@{-->}[u] & \circ
}
\]

\subsection{Fr\"olicher spectral sequence}
The structure of the double complex induces a natural filtration on the total complex of complex differential forms $(\Omega^{\bullet}(X; \mathds{C}), d)$, given by  
$$ F^p \Omega^k(X, \mathds{C}) = \bigoplus_{\substack{r + s = k \\ r \geq p}} \Omega^{r,s}(X). $$  
By the general theory of spectral sequences, see for instance \cite{mccleary}, one obtains the \emph{Frölicher spectral sequence} \cite{frolicher}, namely a sequence of differential complexes $\{(E_r^{\bullet,\bullet}, d_r)\}_r$, where $d_r \colon E_r^{p,q} \to E_r^{p+r, q-r+1}$, such that  
\begin{align*}
E_1^{\bullet,\bullet} &= H_{\overline{\partial}}^{\bullet,\bullet}(X), \quad \text{with } d_1 = [\partial], \\
E_{r+1}^{\bullet,\bullet} &= H^{\bullet,\bullet}(E_r, d_r), \\
E_r^{p,q} &\Rightarrow H^{p+q}_{dR}(X, \mathds{C}).
\end{align*}
The last condition means that $d_r = 0$ for all sufficiently large $r$, so that the sequence stabilizes at some $E_\infty^{\bullet,\bullet}$, and
$$ E_\infty^{p,q} \simeq \frac{F^p H^k_{dR}(X, \mathds{C})}{F^{p+1} H^k_{dR}(X, \mathds{C})},$$  
where $F$ denotes the induced filtration in cohomology.
In particular, since $H^k_{dR}(X, \mathds{C}) = \bigoplus_p \sfrac{F^p H^k_{dR}(X, \mathds{C})}{F^{p+1} H^k_{dR}(X, \mathds{C})}$, and since at each step of the spectral sequence we have $\dim E_{r+1} \leq \dim E_r$, we obtain the \emph{Frölicher inequality}:  
$$ b_k \leq \sum_{p+q=k} h^{p,q}, $$  
where $b_k$ denotes the $k$-th Betti number (that is, the dimension of the de Rham cohomology group $H^k_{dR}(X,\mathds R)$) and $h^{p,q}$ denotes the $(p,q)$-th Hodge number (that is, the dimension of the Dolbeault cohomology group $H^{p,q}_{\overline{\partial}}(X)$).
Recently, Bei and Piovani \cite{bei-piovani} proposed a new proof of the Fr\"olicher inequality based on the use of harmonic forms.

\subsection{\texorpdfstring{$\partial\overline\partial$}{partial overline partial}-Lemma property and formality}

Note that the identity map induces natural morphisms between the various cohomology groups introduced above. Together with the Fr\"olicher spectral sequence, this yields the following diagram:
$$ \xymatrix{
& H_{BC}(X) \ar[d] \ar[dl] \ar[dr] & \\
H_{\overline\partial} \ar@{=>}[r] \ar[dr] & H_{dR} \ar[d] & H_{\partial} \ar@{=>}[l] \ar[dl] \\
& H_{A} &
} $$

A compact holomorphic manifold is said to satisfy the \emph{$\partial\overline\partial$-lemma property} if the natural map $H_{BC}(X) \to H_{A}(X)$ induced by the identity is injective. This is equivalent to the condition that all the maps in the above diagram are isomorphisms, thanks to \cite[Lemma 5.15]{deligne-griffiths-morgan-sullivan}. In other words, every $d$-closed $(p,q)$-form is $d$-exact if and only if it is $\partial\overline\partial$-exact.
This condition is also equivalent to: {\itshape (i)} the degeneration of the Fr\"olicher spectral sequence at the first page, {\itshape i.e.}, $E_\infty = E_1$, {\itshape (ii)} together with the property that the filtration induces a Hodge structure of weight $k$ on the de Rham cohomology group $H^k_{dR}(X, \mathds{C})$, namely,
$$ H^k_{dR}(X, \mathds{C}) = \bigoplus_{p+q=k} \left( F^p H^k_{dR}(X, \mathds{C}) \cap \overline{F^q H^k_{dR}(X, \mathds{C})} \right), $$
see \cite[Remark 5.21]{deligne-griffiths-morgan-sullivan}.
We will denote $H^{p,q}(X)$ the subgroups in the right-hand side, which are canonically isomorphic to $H^{p,q}_{\overline\partial}(X)$, as well as to any other cohomology, under the $\partial\overline\partial$-lemma.
In terms of the indecomposable elements, this means that there are no zigzags of length greater than one, but only squares and dots.

This property is crucial, for instance, in describing the set of representatives of a real de Rham cohomology class of type $(1,1)$. Indeed, given $[\alpha] \in H^{1,1}(X)$, we have
$$
[\alpha] = \left\{ \alpha + \sqrt{-1} \, \partial\overline\partial f \,:\, f \in \mathcal{C}^\infty(X; \mathds{R}) \right\},
$$
so that a cohomological condition reduces to a scalar problem.

Another crucial consequence of the $\partial\overline\partial$-Lemma is the following.
According to Sullivan's theory \cite{sullivan}, the rational homotopy type of a differentiable manifold is encoded in the differential graded algebra (DGA) of differential forms. A manifold is said to be \emph{formal} if its de Rham complex $(\Omega^\bullet(X), d)$ is connected to its cohomology algebra $(H^\bullet(X,\mathds{R}), d = 0)$ by a chain of quasi-isomorphisms of DGAs. In particular, this means that the rational homotopy type of a formal manifold is completely determined by its de Rham cohomology.

\begin{theorem}[{\cite{deligne-griffiths-morgan-sullivan}}]
Compact complex manifolds satisfying the $\partial\overline\partial$-Lemma are formal.
\end{theorem}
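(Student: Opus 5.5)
The plan is the classical $dd^c$-argument of Deligne--Griffiths--Morgan--Sullivan. Set $d^c := \sqrt{-1}(\overline\partial-\partial)$, a real operator with $d d^c = 2\sqrt{-1}\,\partial\overline\partial$ and $d d^c=-d^c d$. I would connect the real de Rham algebra to its cohomology by the zigzag of DGA morphisms
\[
(\Omega^\bullet(X),d) \xleftarrow{\ \iota\ } (\ker d^c, d) \xrightarrow{\ \pi\ } \left(H^\bullet_{d^c}(X), 0\right),
\]
where $H_{d^c}:=\ker d^c/\im d^c$, $\iota$ is the inclusion and $\pi$ is the projection. Then I would identify $(H^\bullet_{d^c}(X),0)$ with $(H^\bullet_{dR}(X,\mathds R),0)$ as algebras, which gives formality.

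The first step is to check that every object is a DGA and every map is a morphism. Since $d^c$ is a real derivation of degree one, $\ker d^c$ is a subalgebra and $\im d^c$ is an ideal in $\ker d^c$ by the Leibniz rule, so $H_{d^c}$ is a graded-commutative algebra. Moreover, $d$ preserves $\ker d^c$ because $d^c d = -d d^c$. Next I would show that $d$ induces the zero differential on $H_{d^c}$, so that $\pi$ is a chain map to $(H_{d^c},0)$. This amounts to showing that $d$ maps $\ker d^c$ into $\im d^c$. This is exactly the real form of the $\partial\overline\partial$-Lemma: if $\eta\in\ker d^c$, then $d\eta$ is $d$-closed, $d^c$-closed and $d$-exact, hence $d\eta = d d^c\xi$.

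The heart of the proof is showing that $\iota$ and $\pi$ are quasi-isomorphisms. I would first record the lemma in the form: for a form that is both $d$- and $d^c$-closed, being $d$-exact, being $d^c$-exact and being $dd^c$-exact are all equivalent. This follows from the hypothesis, since the property ``every $d$-closed $(p,q)$-form is $d$-exact iff $\partial\overline\partial$-exact'' is equivalent to the absence of zigzags of length greater than one in the double complex. For a complex consisting only of squares and dots, the same statements for $d$ and $d^c$ can be verified directly on each indecomposable summand, so the general case reduces to that check.

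With this lemma, the quasi-isomorphism checks go as follows.
\begin{itemize}
\item \emph{$\iota$ is surjective on cohomology.} Given $d\alpha=0$, the form $d^c\alpha$ is $d$-closed, $d^c$-closed and $d^c$-exact, so $d^c\alpha = dd^c\beta$. Then $\alpha - d\beta$ is cohomologous to $\alpha$ and is $d^c$-closed, because $d^c(\alpha - d\beta) = d^c\alpha - d^c d\beta = d^c\alpha - dd^c\beta = 0$.
\item \emph{$\iota$ is injective on cohomology.} If $\alpha\in\ker d^c$ satisfies $d\alpha=0$ and $\alpha = d\gamma$, then $\alpha = dd^c\mu = d(d^c\mu)$ with $d^c\mu\in\ker d^c$.
\item \emph{$\pi$ is an isomorphism on cohomology.} The cohomology of $(\ker d^c,d)$ is $(\ker d\cap\ker d^c)/d(\ker d^c)$, and $d(\ker d^c)=\im dd^c$ by the previous point. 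The projection to $H_{d^c}$ is surjective, since any $d^c$-closed $\alpha$ can be corrected within its $d^c$-class to a $d$-closed form by the symmetric argument. It is injective, since a form that is $d$-closed, $d^c$-closed and $d^c$-exact is $dd^c$-exact.
\end{itemize}

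The main obstacle I expect is carrying the lemma over from its $(p,q)$-form to real forms of mixed type in the $d$/$d^c$ language. I would handle this by decomposing a form into its bidegree components; since $d$ and $d^c$ are built from $\partial$ and $\overline\partial$, the closedness and exactness conditions split according to bidegree. The rest is formal bookkeeping with signs.
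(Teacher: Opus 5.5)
Your proposal is correct and follows the paper's proof essentially step by step: the same zigzag $(\Omega^\bullet(X),d)\leftarrow(\ker d^c,d)\rightarrow(H^\bullet_{d^c}(X),0)$, the same vanishing of the induced differential on $H_{d^c}$, and the same $dd^c$-lemma arguments showing that $\iota$ and $\pi$ are quasi-isomorphisms; your reduction of the real $dd^c$-lemma to a check on squares and dots spells out what the paper asserts in one line. One small sign slip: since $d^c d=-dd^c$, the $d^c$-closed representative in your surjectivity step for $\iota$ must be $\alpha+d\beta$, not $\alpha-d\beta$.
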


\begin{proof}
Consider the operator \( d^c = J^{-1} d J = -\sqrt{-1}(\partial - \overline\partial) \), and the following diagram:
\[
\xymatrix{
& (\ker d^c, d) \ar[ld]_i \ar[rd]^p & \\
(\Omega^\bullet(X), d) && (H^\bullet_{d^c}(X), d)
}
\]
where \( H^\bullet_{d^c}(X) := \ker d^c / \operatorname{im} d^c \).
To prove formality, it is enough to show that, under the assumption of the \(\partial\overline\partial\)-Lemma property, both \(i\) and \(p\) induce isomorphisms in cohomology, and that the induced differential on \(H^\bullet_{d^c}(X)\) vanishes. Observe that the \(\partial\overline\partial\)-Lemma can also be formulated in terms of the real operators \( d \) and \( d^c \), in place of $\partial$ and $\overline\partial$, which is often preferable when working with real-valued forms.

We start by proving that \( i^* \) is surjective. Let \( a = [x] \in H_{dR}(X, \mathds{R}) \). Then \( d^cx \) is both \( d \)-closed and \( d^c \)-exact. The \(\partial\overline\partial\)-Lemma guarantees that \( d^cx \) is also \( dd^c \)-exact, that is, there exists \( z \) such that $d^cx = dd^cz$.
It follows that \( \alpha + dz \) is \( d^c \)-closed and represents the same de Rham class as \( [x] \), hence \( a = i^*([x + dz]) \). Therefore, \( i^* \) is surjective.
To show that \( i^* \) is also injective, suppose that \( y \in \ker d^c \) is \( d \)-exact. Then \( y = dz \), and the \(\partial\overline\partial\)-Lemma implies that \( y \) is also \( dd^c \)-exact, {\itshape i.e.} \( y = dd^cw \) for some \( w \). Hence, $y = d(d^cw)$, and \( d^cw \in \ker d^c \), so \( y \) is the \( d \)-differential of a \( d^c \)-closed form. Thus, \( \beta \) represents the zero class in \( H(\ker d^c, d) \), proving injectivity.

Note that the differential induced by \( d \) on \( H_{d^c}(X) \) is trivial. Indeed, if \( d^c y = 0 \), then the \(\partial\overline\partial\)-Lemma applied to \( d y \) implies that \( d y = d d^c w \) for some \( w \). In particular, \( d y \in \im d^c \), so \( [d y] = 0 \) in \( H_{d^c}(X) \).

Finally, we prove that \( p^* \) is an isomorphism. Let \([y] \in H_{d^c}^\bullet(X) \). Then \( dy \) is both \( d^c \)-closed and \( d \)-exact. By the \(\partial\overline\partial\)-Lemma, it follows that \( dy = dd^c w \) for some form \( w \). Thus,
\[
[y] = [y + d^c w] \quad \text{in } H^\bullet_{d^c}(X),
\]
and \( d(y + d^c w) = dy + d d^c w = dy - dy = 0 \), so \( y + d^c w \in \ker d \). This proves that \( p^* \) is surjective.
Conversely, suppose \( y \in \ker d^c \) satisfies \( dy = 0 \) and \([y] = 0 \) in \( H_{d^c}^\bullet(X) \), that is, \( y = d^c w \) for some \( w \). Applying the \(\partial\overline\partial\)-Lemma again, we get \( y = dd^c z \) for some \( z \), which implies that \( y \) is exact in the complex \( (\ker d^c, d) \). Hence, \( p^* \) is injective.
\end{proof}

The wedge product endows the de Rham cohomology with the structure of a graded-commutative algebra. However, in general, it is not possible to choose a set of differential form representatives that is closed under the wedge product. For instance, as first observed by Sullivan \cite{sullivan}, there exists an ``incompatibility between the wedge product and the harmonicity of forms'' on a compact differentiable Riemannian manifold.
By the homotopy transfer principle \cite{kadeishvili}, any choice of cohomology representatives inherits an $A_\infty$-algebra structure in the sense of Stasheff \cite{stasheff}. The first operation is \( m_1 = 0 \), the second operation \( m_2 \) is induced by the wedge product followed by projection, and the higher operations \( m_k \) for \( k \geq 3 \) are related to Massey products \cite{lu-palmieri-wu-zhang, bujis-morenofernandez-murillo, positselski}.
When this $A_\infty$-structure reduces to a strict algebra, {\itshape i.e.} all \( m_k = 0 \) for \( k \neq 2 \), the manifold is formal.
For a study of Massey products and $A_\infty$-structures in Bott-Chern and Aeppli cohomologies, see \cite{angella-tomassini-JGP, tardini-tomassini, stelzig-AdvMath25, cirici-garridovilallave-sopena, poirier-tradler-wilson, martinmerchan-stelzig}.

\subsection{Numerical characterization of the \texorpdfstring{$\partial\overline\partial$}{partial overline partial}-Lemma property}

The $\partial\overline{\partial}$-Lemma property can be numerically characterized solely in terms of the Bott-Chern cohomology (note that, on a compact manifold, $\dim H^{p,q}_{A}(X) = \dim H^{n-p,n-q}_{BC}(X)$, as we will see in \Cref{sec:hodge}).

\begin{theorem}[{\cite{angella-tomassini-InvMath}}]
Let $X$ be a compact holomorphic manifold. For any $k \in \mathds{Z}$, define the integer
\[
\Delta_k := \sum_{p+q=k} \left( \dim H^{p,q}_{BC}(X) + \dim H^{p,q}_{A}(X) \right) - 2 b_k,
\]
where $b_k$ is the $k$-th Betti number of $X$. Then, $\Delta_k \geq 0$ for all $k$. Moreover, $X$ satisfies the $\partial\overline{\partial}$-Lemma property if and only if $\Delta_k = 0$ for all $k$.
\end{theorem}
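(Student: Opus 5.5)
The most direct route uses the structure theorem for double complexes stated above (Stelzig, Khovanov--Qi). By that theorem, $(\Omega^{\bullet,\bullet}(X),\partial,\overline\partial)$ decomposes as a direct sum of squares and zigzags. Since each cohomology is additive over direct sums, it suffices to understand how each indecomposable contributes to $H_{BC}$, $H_A$ and $H_{dR}$. Finite-dimensionality on compact $X$, discussed later in \Cref{sec:hodge}, ensures that only finitely many zigzags occur in each total degree, so all sums are finite.

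Squares contribute zero to every one of these cohomologies, so only zigzags matter. A zigzag $Z$ lives on at most two antidiagonals, of total degrees $k$ and $k+1$. Using the combinatorial descriptions recalled above, I will record two contributions for each shape:
\begin{itemize}
\item its contribution to $H_{dR}$, which is $1$ in a single total degree for odd length and $0$ for even length;
\item its contribution to $h_{BC}+h_A$, which counts the corners with only incoming arrows plus the corners with only outgoing arrows.
\end{itemize}

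For a dot, both $h_{BC}$ and $h_A$ equal $1$ and $b=1$, so the contribution to $\Delta_k$ is $0$. For a zigzag of odd length $2m+1\ge 3$ with de Rham class in degree $k$, suppose most vertices lie in degree $k$. Then there are $m+1$ ``top'' vertices in degree $k$, which are BC corners, and $m$ vertices in degree $k+1$, which are Aeppli corners... Here I need to be careful about which antidiagonal carries which corners. The two situations are:
\begin{itemize}
\item if the endpoints sit on degree $k$ with arrows going out, they are Aeppli in degree $k$, and the receiving vertices in degree $k+1$ are BC;
\item if the arrows come in to degree-$k$ vertices from degree $k-1$, those vertices are BC in degree $k$, and the sources are Aeppli in degree $k-1$.
\end{itemize}
Either way, degree $k$ receives $m+1$ from one of $h_{BC}$ or $h_A$, giving a contribution of $m+1-2=m-1\ge 0$ to $\Delta_k$. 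The adjacent degree receives $m\ge 1$ with no de Rham term. For even length $2m$, the contribution is $m\ge 1$ in each of the two adjacent degrees, again with no de Rham term. Summing, $\Delta_k$ is a sum of non-negative contributions, which proves $\Delta_k\ge 0$.

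For the equivalence, $\Delta_k=0$ for all $k$ forces every zigzag of length greater than one to be absent. Even-length zigzags contribute $m\ge1$, and odd ones of length $\ge 3$ contribute $m\ge 1$ in the neighbouring degree. By the characterization recalled above, having only squares and dots is exactly the $\partial\overline\partial$-Lemma. Conversely, under the $\partial\overline\partial$-Lemma all the cohomologies agree, so $h_{BC}=h_A=h^{p,q}$ and $\sum_{p+q=k}h^{p,q}=b_k$ by degeneration at $E_1$, giving $\Delta_k=0$.

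The main obstacle is the bookkeeping of corner types and degrees for each zigzag shape. I would double-check it on the length-$3$ examples in the pictures, to confirm that an odd zigzag contributes $m+1$ to one of the two cohomologies in its de Rham degree and $m$ to the other in a neighbouring degree, and never less. Alternatively, one could avoid the structure theorem and follow the original argument of \cite{angella-tomassini-InvMath}, using two exact sequences of Varouchas type relating $H_{BC}$, $H_{\overline\partial}$, $H_\partial$ and $H_A$ together with the Frölicher inequality. However, the zigzag count is cleaner and uses only results already stated.
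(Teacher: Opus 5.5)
Your argument is correct. It takes a genuinely different route from the paper's detailed proof, though it makes rigorous the heuristic sketched in the paper's opening paragraph.

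The paper goes through Dolbeault cohomology. From Varouchas' exact sequences and the isomorphisms $\overline\partial\colon C^{\bullet,\bullet}\to D^{\bullet,\bullet+1}$, $\partial\colon E^{\bullet,\bullet}\to B^{\bullet+1,\bullet}$, it derives the identity $h^{p,q}_{BC}+h^{p,q}_{A}=h^{p,q}_{\overline\partial}+h^{p,q}_{\partial}+a^{p,q}+f^{p,q}$, and then applies the Fr\"olicher inequality. For the converse, it extracts $a^{k+1}=0$ and $E_1=E_\infty$ from the vanishing of the $\Delta_k$. It then corrects a closed form by $d\big(\sum_j(-1)^j\eta^{k-j-1,j}\big)$ so that its pure-type components are closed, and concludes by counting with the duality $h^{p,q}_A=h^{n-p,n-q}_{BC}$.

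You instead compare $h_{BC}+h_A$ with $2b_k$ directly on each indecomposable summand. This way you need neither the Fr\"olicher spectral sequence nor Schweitzer duality. The equivalence also becomes immediate, because every zigzag of length at least two contributes at least $1$ to some $\Delta_j$. The price is reliance on the structure theorem, applied to the infinite-dimensional but bounded complex $\Omega^{\bullet,\bullet}(X)$. That is a much deeper input than the elementary linear algebra behind Varouchas' sequences, which in addition yield the intermediate inequality $h_{BC}+h_A\geq h_{\overline\partial}+h_\partial$ with the explicit defect $a+f$.

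To make your bookkeeping airtight, state explicitly the two facts you use implicitly.
\begin{itemize}
\item In a zigzag of length at least two, concentrated in total degrees $j$ and $j+1$, one has $\partial\overline\partial=0$ and every arrow goes from degree $j$ to degree $j+1$. Hence every vertex in degree $j$ (not only the corners) spans an Aeppli class, and every vertex in degree $j+1$ spans a Bott--Chern class.
\item The map $d$ between the two antidiagonals is a bidiagonal matrix with non-zero entries, hence of maximal rank. This is why an odd zigzag has its single de Rham class on the antidiagonal containing the majority of its vertices.
\end{itemize}
Your first assignment (Bott--Chern in degree $k$, Aeppli in degree $k+1$) is inconsistent with these facts, but your two bullet points correct it. The resulting contributions are exactly right: $m-1$ in the de Rham degree and $m$ in the neighbouring degree for length $2m+1$, and $m$ in each of the two degrees for length $2m$.
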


\begin{proof}
The idea is that Dolbeault cohomology does not take horizontal arrows into account, while conjugate Dolbeault cohomology ignores vertical arrows. Bott-Chern cohomology counts the indecomposable corner elements that may have incoming arrows, and Aeppli cohomology counts those that may have outgoing arrows, in all cases squares being excluded. 
Therefore, by purely combinatorial arguments, one sees that the sum of the dimensions of Bott-Chern and Aeppli cohomologies is greater than or equal to the sum of the dimensions of Dolbeault and conjugate Dolbeault cohomologies, which in turn is greater than or equal to twice the Betti number, by the Fr\"olicher inequality.
Moreover, both inequalities become equalities if and only if the double complex decomposes as a direct sum of squares and dots, that is, if the manifold satisfies the $\partial\overline{\partial}$-Lemma property.

For a more detailed argument, we recall the exact sequences introduced by Varouchas \cite{varouchas}. Define the following finite-dimensional bi-graded vector spaces:
\[
A^{\bullet,\bullet} := \frac{\operatorname{im}\, \overline\partial \cap \operatorname{im}\, \partial}{\operatorname{im}\, \partial\overline\partial}, \quad 
B^{\bullet,\bullet} := \frac{\ker \overline\partial \cap \operatorname{im}\, \partial}{\operatorname{im}\, \partial\overline\partial}, \quad 
C^{\bullet,\bullet} := \frac{\ker \partial\overline\partial}{\ker \overline\partial + \operatorname{im}\, \partial},
\]
\[
D^{\bullet,\bullet} := \frac{\operatorname{im}\, \overline\partial \cap \ker \partial}{\operatorname{im}\, \partial\overline\partial}, \quad 
E^{\bullet,\bullet} := \frac{\ker \partial\overline\partial}{\ker \partial + \operatorname{im}\, \overline\partial}, \quad 
F^{\bullet,\bullet} := \frac{\ker \partial\overline\partial}{\ker \overline\partial + \ker \partial}.
\]
For simplicity of notation, lowercase letters will denote the dimensions of the corresponding vector spaces, {\itshape e.g.} \(a^{p,q} := \dim A^{p,q}\), \(h^{p,q}_{BC} := \dim H^{p,q}_{BC}(X)\), and similarly for the others. We also define $a^{k} := \sum_{p+q=k} a^{p,q}$, $h^k_{BC} := \sum_{p+q=k} h^{p,q}_{BC}$, etcetera.

One has the following exact sequences \cite[\S3.1]{varouchas}:
\begin{equation}\label{eq:succesatta-1}
0 \to A^{\bullet,\bullet} \to B^{\bullet,\bullet} \to H^{\bullet,\bullet}_{\overline\partial}(X) \to H^{\bullet,\bullet}_{A}(X) \to C^{\bullet,\bullet} \to 0,
\end{equation}
\begin{equation}\label{eq:succesatta-2}
0 \to D^{\bullet,\bullet} \to H^{\bullet,\bullet}_{BC}(X) \to H^{\bullet,\bullet}_{\overline\partial}(X) \to E^{\bullet,\bullet} \to F^{\bullet,\bullet} \to 0 .
\end{equation}
As noted in \cite[\S 3.1]{varouchas}, conjugation and the isomorphisms
\[
\overline{\partial} : C^{\bullet,\bullet} \xrightarrow{\sim} D^{\bullet,\bullet+1}, \quad
\partial : E^{\bullet,\bullet} \xrightarrow{\sim} B^{\bullet+1,\bullet}
\]
imply the relations $a^{p,q} = a^{q,p}$, $f^{p,q} = f^{q,p}$, $d^{p,q} = b^{q,p}$, $e^{p,q} = c^{q,p}$, and $c^{p,q} = d^{p,q+1}$, $e^{p,q} = b^{p+1,q}$.

We are now ready to prove the Fr\"olicher-type inequality for Bott-Chern cohomology. For any \(p, q\), using the conjugation symmetries and the exact sequences above, we have
\begin{align*}
\lefteqn{ h^{p,q}_{BC} + h^{p,q}_{A} }\\
&= h^{p,q}_{BC} + h^{q,p}_{A} \\
&= h^{p,q}_{\overline{\partial}} + h^{q,p}_{\overline{\partial}} + f^{p,q} + a^{q,p} + d^{p,q} - b^{q,p} - e^{p,q} + c^{q,p} \\
&= h^{p,q}_{\overline{\partial}} + h^{p,q}_{\partial} + f^{p,q} + a^{p,q} \\
&\geq h^{p,q}_{\overline{\partial}} + h^{p,q}_{\partial}.
\end{align*}
Since the Fr\"olicher inequality states that $h^{k}_{\overline{\partial}} \geq b_k$, it follows that \(\Delta_k \geq 0\) for all \(k\).

Let us now assume that equality \(\Delta_k = 0\) holds for all \(k\). This implies that both \(a^k = f^k = 0\) and that the Frölicher inequality is an equality. The latter condition means that the Frölicher spectral sequence degenerates at the first page.
We will now prove that the condition \(a^{k+1} = 0\) implies that the natural map
\[
\bigoplus_{p+q=k} H^{p,q}_{dR}(X) \to H^k_{dR}(X, \mathds{C})
\]
is surjective.
This will suffice, since the inequality $h^{k}_{BC} \geq b_k$, together with the condition \(\Delta_k = 0\) and the duality $h^{p,q}_{A} = h^{n-p,n-q}_{BC}$ (see \Cref{sec:hodge}), implies that $h^{k}_{BC} = b_k$, meaning that the natural map is indeed an isomorphism. This is equivalent to the \(\partial\overline{\partial}\)-Lemma property.

To prove the claim, let \(a = [\alpha] \in H^k_{dR}(X, \mathds{C})\). We need to show that \(a\) admits a representative whose pure-type components are \(d\)-closed.
Decompose \(\alpha\) into its pure-type components:
\[
\alpha = \sum_{j=0}^k (-1)^j \alpha^{k-j, j}.
\]
The condition \(d\alpha = 0\) is equivalent to the system
\begin{align*}
\partial \alpha^{k,0} &= 0, \\
\overline{\partial} \alpha^{k-j, j} - \partial \alpha^{k-j-1, j+1} &= 0 \quad \text{for } j = 0, \ldots, k-1, \\
\overline{\partial} \alpha^{0,k} &= 0.
\end{align*}
Note that, for each \(j \in \{0, \ldots, k-1\}\), the \((k+1)\)-form $\overline{\partial} \alpha^{k-j, j} = \partial \alpha^{k-j-1, j+1}$ is both \(\partial\)-exact and \(\overline{\partial}\)-exact. By the assumption \(a^{k+1} = 0\), it follows that this form is also \(\partial \overline{\partial}\)-exact. Therefore, there exists \(\eta^{k-j-1, j} \in \wedge^{k-j-1, j} X\) such that
\[
\overline{\partial} \alpha^{k-j, j} = \partial \alpha^{k-j-1, j+1} = \partial \overline{\partial} \eta^{k-j-1, j} .
\]
It then follows straightforwardly that each pure-type component of
\[
\alpha + d \left( \sum_{j=0}^{k-1} (-1)^j \, \eta^{k-j-1, j} \right)
\]
is closed, thus proving the claim.
\end{proof}

We also have an upper bound for the dimension of Bott-Chern cohomology in terms of Hodge numbers. Note that a purely topological upper bound is not possible, since even-length zigzags contribute to Dolbeault cohomology but not to de Rham cohomology.

\begin{theorem}[{\cite{angella-tardini-PAMS}}]
Let $X$ be a compact holomorphic manifold, of complex dimension $n$. Then, for any $k\in\mathds{Z}$, there holds
\begin{align*}
h^k_{BC} &\leq \min\{k+1, (2n-k)+1\} \cdot \left( h^k_{\overline\partial} + h^{k-1}_{\overline\partial} \right) \\
& \leq (n+1) \cdot \left( h^k_{\overline\partial} + h^{k-1}_{\overline\partial} \right) .
\end{align*}
The same inequality holds true for the Aeppli cohomology. In particular,
$$ | h^k_{A}-h^{k}_{BC} | \leq 2(n+1) \, \left( h^{k}_{\overline\partial} + h^{k+1}_{\overline\partial} \right) . $$
Moreover, $X$ satisfies the $\partial\overline\partial$-Lemma property if and only if
$$ \sum_{k\in\mathds{Z}} \left| h^{k}_{BC} - h^{k}_{A} \right| \;=\; 0 \;. $$
\end{theorem}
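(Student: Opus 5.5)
The plan is to argue combinatorially on the decomposition of the double complex $(\Omega^{\bullet,\bullet}(X),\partial,\overline\partial)$ into squares and zigzags, using the theorem of \cite{stelzig-JLMS, khovanov-qi}. On a compact manifold all the cohomologies are finite-dimensional, so only finitely many zigzags occur, and I would read off every dimension by counting vertices as explained above. Squares contribute to none of the cohomologies, so only zigzags matter. A zigzag $Z$ contributes to $H_{BC}$ exactly through its corners with incoming arrows, and a zigzag contributing to $h^k_{BC}$ lives on the antidiagonals $k-1$ and $k$. All the Bott–Chern contributions of $Z$ in total degree $k$ sit on the antidiagonal $p+q=k$ with $0\le p,q\le n$, which has at most $\min\{k+1,2n-k+1\}$ positions. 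Hence each such zigzag contributes at most $\min\{k+1,(2n-k)+1\}$ to $h^k_{BC}$. I expect this bound to be crude but sufficient.

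The key step is to bound the number of such zigzags by $h^k_{\overline\partial}+h^{k-1}_{\overline\partial}$. I would show that every zigzag supported on the antidiagonals $k-1,k$ contributes at least $2$ to $\sum_{j\in\{k-1,k\}}\bigl(h^j_{\overline\partial}+h^j_{\partial}\bigr)$:
\begin{itemize}
\item a dot contributes $1$ to each of $h_{\overline\partial}$ and $h_{\partial}$;
\item a zigzag of length $\ge 2$ has two endpoints, and each endpoint carries only one arrow, so it survives in $H_{\overline\partial}$ if that arrow is horizontal and in $H_\partial$ if it is vertical.
\end{itemize}
Conjugation gives $h^{p,q}_\partial=h^{q,p}_{\overline\partial}$, so $h^j_\partial=h^j_{\overline\partial}$. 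Summing over zigzags therefore gives
\[
\#\{\text{zigzags contributing to } h^k_{BC}\}\le \tfrac12\cdot 2\left(h^k_{\overline\partial}+h^{k-1}_{\overline\partial}\right),
\]
and the first inequality of the statement follows. The second inequality follows from $\min\{k+1,2n-k+1\}\le n+1$.

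For Aeppli, I would use $h^{p,q}_A=h^{n-p,n-q}_{BC}$ and Serre duality $h^{p,q}_{\overline\partial}=h^{n-p,n-q}_{\overline\partial}$ to transfer the bound. Alternatively, the same argument works directly: an Aeppli contribution in degree $k$ lives on the antidiagonals $k,k+1$, which gives the bound with $h^k_{\overline\partial}+h^{k+1}_{\overline\partial}$. The estimate for $|h^k_A-h^k_{BC}|$ then follows from the triangle inequality together with the monotone reindexing of the Hodge sums.

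For the characterization, one direction is immediate: under the $\partial\overline\partial$-Lemma only squares and dots occur, and dots contribute equally to $H_{BC}$ and $H_A$ in the same degree. The converse is the step I expect to be the main obstacle, and I would proceed as follows.
\begin{itemize}
\item An even-length zigzag across the antidiagonals $j,j+1$ has equally many sources and sinks, say $m$ each, so it contributes $m$ to $h^{j+1}_{BC}$ and $m$ to $h^j_A$.
\item An odd-length zigzag has one more vertex on one antidiagonal than on the other.
\item Hence every zigzag of length $\ge 2$ contributes to $h_{BC}$ and $h_A$ in different degrees.
\item I would consider a zigzag of length $\ge2$ whose lower antidiagonal $j$ is minimal, or use a generating-function argument comparing $\sum_k h^k_{BC}t^k$ with $\sum_k h^k_A t^k$, where the factor of $t$ coming from each nontrivial zigzag cannot cancel.
\item This should force all zigzags to be dots.
\end{itemize}
If this direct approach runs into trouble, I would fall back on combining the equalities $h^k_{BC}=h^k_A$ with Serre duality and the theorem of \cite{angella-tomassini-InvMath}, aiming to show $\Delta_k=0$ for all $k$.
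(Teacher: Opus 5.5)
Your proofs of the two main inequalities, of the Aeppli version, and of the $\partial\overline\partial$-Lemma characterization are correct and follow the paper's argument. The paper also counts per zigzag: each zigzag gives exactly two classes in Dolbeault plus conjugate Dolbeault cohomology, and at most one Bott--Chern class per position on the relevant antidiagonal. For Aeppli you pass through duality, and for the characterization your minimal-antidiagonal argument is the paper's ``no incoming arrow into degree zero'' induction. At that minimal $j$ one has $h^j_A\ge h^j_{BC}+1$.

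\textbf{The gap.} The step that fails is the derivation of $|h^k_A-h^k_{BC}|\le 2(n+1)(h^k_{\overline\partial}+h^{k+1}_{\overline\partial})$. Your bounds control $h^k_{BC}$ by $h^{k-1}_{\overline\partial}+h^k_{\overline\partial}$ and $h^k_A$ by $h^k_{\overline\partial}+h^{k+1}_{\overline\partial}$. The triangle inequality therefore only gives $(n+1)(h^{k-1}_{\overline\partial}+2h^k_{\overline\partial}+h^{k+1}_{\overline\partial})$. There is no monotonicity of $j\mapsto h^j_{\overline\partial}$ that would let you replace $h^{k-1}_{\overline\partial}$ by $h^{k+1}_{\overline\partial}$, so the ``monotone reindexing'' is not a valid step.

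\textbf{The displayed inequality is false.} Take the Hopf threefold $X=(\mathds{C}^3\setminus\{0\})/\langle 2\,\mathrm{id}\rangle$. Its only non-zero Hodge numbers are $h^{0,0}=h^{0,1}=h^{3,2}=h^{3,3}=1$.
\begin{itemize}
\item Then $h^2_{\overline\partial}=h^3_{\overline\partial}=0$, so your correct Aeppli bound forces $h^2_A=0$.
\item On the other hand $h^2_{BC}\ge h^{1,1}_{BC}\ge 1$. The form $\sqrt{-1}\partial\overline\partial\log|z|^2$ descends to $X$, is semipositive and non-zero at every point, and so cannot equal $\sqrt{-1}\partial\overline\partial f$ for a global $f$: at a maximum of $f$ one would have $\sqrt{-1}\partial\overline\partial f\le 0$.
\item Hence $|h^2_A-h^2_{BC}|\ge 1$, while the right-hand side of the display is $0$.
\end{itemize}
The paper's own sketch does not address this display either.

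\textbf{What you can state instead.} Your zigzag analysis does prove $|h^k_A-h^k_{BC}|\le\max\{h^k_A,h^k_{BC}\}\le (n+1)\max\{h^{k-1}_{\overline\partial}+h^k_{\overline\partial},\,h^k_{\overline\partial}+h^{k+1}_{\overline\partial}\}$. Pairing Aeppli in degree $k$ with Bott--Chern in degree $k+1$, which come from the same zigzags, gives $|h^k_A-h^{k+1}_{BC}|\le (n+1)(h^k_{\overline\partial}+h^{k+1}_{\overline\partial})$. State one of these in place of the displayed bound.
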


\begin{proof}
The key idea is that any zigzag of length \(\ell + 1\), situated between total degrees \(k\) and \(k+1\), produces exactly two non-trivial classes in either Dolbeault or conjugate Dolbeault cohomology at degree \(k\) or \(k+1\), and at most \(\left\lfloor \frac{\ell}{2} \right\rfloor + 1 \leq \min\{k+1, (2n - k) + 1\} \leq n+1\) classes in Aeppli cohomology at degree \(k\).

For the characterization of the \(\partial\overline{\partial}\)-Lemma property, recall that Bott-Chern cohomology counts corners with possible incoming arrows, while Aeppli cohomology counts corners with possible outgoing arrows, except for squares. Therefore, the hypothesis can be restated as follows: for any anti-diagonal, the number of incoming arrows equals the number of outgoing arrows, except for squares. Since no incoming arrow can enter the anti-diagonal of total degree zero, it follows that there are no positive-length zigzags in the entire diagram. This precisely characterizes the \(\partial\overline{\partial}\)-Lemma property.
\end{proof}

The wedge product on differential forms induces a natural algebra structure on Bott-Chern cohomology, while Aeppli cohomology carries only a module structure over \(H_{BC}\). On a \emph{compact} manifold, integration defines a non-degenerate pairing on both de Rham and Dolbeault cohomologies, see \Cref{sec:hodge}. The corresponding duality combines Bott-Chern and Aeppli cohomologies, as established by Schweitzer \cite{schweitzer}.
This duality motivated the construction in \cite{angella-tomassini-JGP}, where both Bott-Chern and Aeppli cohomologies were used to extend the notion of triple Massey products. In the broader context of developing an \(A_\infty\)-algebra structure on Bott-Chern cohomology, potentially involving higher-order Massey products, one might consider assuming that the Bott-Chern pairing is non-degenerate. However, this turns out to be too strong an assumption.

\begin{corollary}[{\cite{angella-tardini-PAMS}}]
Let \(X\) be a compact complex manifold. The natural pairing
\[
H^{\bullet,\bullet}_{BC}(X) \times H^{\bullet,\bullet}_{BC}(X) \to \mathds{C}, \qquad ([\alpha],[\beta]) \mapsto \int_X \alpha \wedge \beta,
\]
is non-degenerate if and only if \(X\) satisfies the \(\partial\overline{\partial}\)-Lemma property.
\end{corollary}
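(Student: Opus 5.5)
The plan is to use the characterization from the preceding theorem: $X$ satisfies the $\partial\overline\partial$-Lemma if and only if $\sum_k |h^k_{BC}-h^k_A|=0$. We combine it with the duality $\dim H^{p,q}_A(X)=\dim H^{n-p,n-q}_{BC}(X)$ on compact manifolds, coming from the non-degenerate pairing $H_{BC}^{p,q}\times H_A^{n-p,n-q}\to\mathds C$ (Schweitzer). The Bott-Chern pairing is well defined: if $\alpha$ is $\partial$- and $\overline\partial$-closed and $\beta=\partial\overline\partial\gamma$, then $\int\alpha\wedge\partial\overline\partial\gamma=\pm\int\partial\overline\partial\alpha\wedge\gamma=0$ by Stokes. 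It pairs $H^{p,q}_{BC}$ with $H^{n-p,n-q}_{BC}$.

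For the direction ``$\partial\overline\partial$-Lemma $\Rightarrow$ non-degenerate'', the $\partial\overline\partial$-Lemma makes the natural map $H^{\bullet,\bullet}_{BC}(X)\to H^{\bullet,\bullet}_A(X)$ an isomorphism. The Bott-Chern pairing factors as
\[
H^{p,q}_{BC}\times H^{n-p,n-q}_{BC}\to H^{p,q}_{BC}\times H^{n-p,n-q}_{A}\to\mathds C ,
\]
where the second map is the Schweitzer pairing. This is consistent, because the integral $\int\alpha\wedge\beta$ depends only on the Aeppli class of $\beta$ when $\alpha$ is $\partial$- and $\overline\partial$-closed. Since the last pairing is non-degenerate and the first arrow is an isomorphism, the Bott-Chern pairing is non-degenerate.

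For the converse, assume the Bott-Chern pairing is non-degenerate. Non-degeneracy on each block $H^{p,q}_{BC}\times H^{n-p,n-q}_{BC}$ gives $h^{p,q}_{BC}=h^{n-p,n-q}_{BC}$. Duality gives $h^{n-p,n-q}_{BC}=h^{p,q}_A$, so $h^{p,q}_{BC}=h^{p,q}_A$ for all $p,q$. Summing over $p+q=k$ yields $h^k_{BC}=h^k_A$ for every $k$, and the previous theorem then gives the $\partial\overline\partial$-Lemma.

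The main obstacle is the bookkeeping in the first direction: checking that the pairing really factors through the Aeppli pairing and that Schweitzer's duality is non-degenerate. Here we take the latter as given, since the paper quotes it (it rests on Hodge theory for the Bott-Chern and Aeppli Laplacians, with the Hodge star exchanging the harmonic spaces). If one prefers to avoid Schweitzer's pairing in this direction, an alternative is available under the $\partial\overline\partial$-Lemma: identify $H_{BC}$ with $H_{dR}$ compatibly with wedge product and integration, and invoke Poincaré duality.
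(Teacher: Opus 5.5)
Your proof is correct and matches the route the paper intends: the corollary is read off from the characterization $\sum_k |h^k_{BC}-h^k_A|=0$ together with Schweitzer duality $h^{p,q}_A=h^{n-p,n-q}_{BC}$. Non-degeneracy forces $h^{p,q}_{BC}=h^{n-p,n-q}_{BC}=h^{p,q}_A$. Conversely, under the $\partial\overline\partial$-Lemma the pairing becomes the (non-degenerate) $H_{BC}\times H_A$ pairing, or equivalently Poincaré duality via $H_{BC}\simeq H_{dR}$.

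One step you use implicitly should be stated: for bidegree reasons the pairing between $H^{p,q}_{BC}$ and $H^{r,s}_{BC}$ vanishes unless $(r,s)=(n-p,n-q)$. That is what makes global non-degeneracy equivalent to non-degeneracy on each block.
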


\subsection{\texorpdfstring{$\partial\overline\partial$}{partial overline partial}-Lemma property under geometric transformations}

Here, we briefly recall some results concerning the behaviour of the $\partial\overline\partial$-Lemma property under natural geometric operations.

\begin{theorem}[{\cite{deligne-griffiths-morgan-sullivan}}]
Let $X$ and $Y$ be compact holomorphic manifolds of the same dimension, and let $f \colon X \to Y$ be a holomorphic bimeromorphic map. If $X$ satisfies the $\partial\overline\partial$-Lemma property, then so does $Y$. In particular, Mo\v{\i}\v{s}ezon manifolds \cite{moishezon} and manifolds in class $\mathcal C$ of Fujiki \cite{fujiki-InvMath} satisfy the $\partial\overline\partial$-Lemma property.
\end{theorem}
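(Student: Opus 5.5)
The plan is the classical argument of Deligne--Griffiths--Morgan--Sullivan, which reduces the statement to the injectivity of $f^*$ on cohomology. Let $f\colon X\to Y$ be holomorphic and bimeromorphic between compact manifolds of the same dimension $n$; in particular $f$ is surjective of degree one. First I would show that $f^*\colon H^\bullet_{dR}(Y,\mathds C)\to H^\bullet_{dR}(X,\mathds C)$ is injective. Since $\deg f=1$, the projection formula gives $\int_X f^*\alpha\wedge f^*\beta=\int_Y\alpha\wedge\beta$ for closed forms $\alpha,\beta$ on $Y$. If $f^*[\alpha]=0$, then $\int_Y\alpha\wedge\beta=0$ for every closed $\beta$, and Poincaré duality on $Y$ forces $[\alpha]=0$. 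Equivalently, the pushforward of currents satisfies $f_*f^*=\mathrm{id}$ on cohomology. The pushforward $f_*$ commutes with $\partial$ and $\overline\partial$ and preserves the bidegree, because $f$ is holomorphic and the dimensions agree.

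Next comes the $\partial\overline\partial$-lemma transfer. Take a form $\gamma$ on $Y$ of pure type $(p,q)$ that is $\partial$- and $\overline\partial$-closed and $d$-exact, say $\gamma=d\eta$. Then $f^*\gamma=d(f^*\eta)$ is a $d$-closed, $d$-exact pure-type form on $X$. By the $\partial\overline\partial$-lemma on $X$ we get $f^*\gamma=\partial\overline\partial\mu$ for some smooth $\mu$ on $X$. Pushing forward gives $\gamma=f_*f^*\gamma=\partial\overline\partial f_*\mu$, where $f_*\mu$ is a current of bidegree $(p-1,q-1)$ on $Y$. The identity $f_*f^*\gamma=\gamma$ holds at the level of currents because $f$ is a biholomorphism off a proper analytic subset, which has measure zero. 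So $\gamma$ is $\partial\overline\partial$-exact in the complex of currents.

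The remaining step, which I expect to be the main obstacle, is to pass from currents back to smooth forms. I would use that Bott--Chern cohomology can be computed either with smooth forms or with currents. This follows from the hypercohomology or sheaf interpretation in \cite{schweitzer}: both complexes are fine resolutions of the same complex of sheaves. Alternatively, it follows from elliptic regularity for the Bott--Chern Laplacian. Hence a smooth form that is $\partial\overline\partial$-exact as a current is $\partial\overline\partial$-exact as a smooth form. This proves the $\partial\overline\partial$-lemma for $Y$ in its pure-type formulation, that is, the injectivity of $H_{BC}\to H_{dR}$ on pure-type classes, which the paper stated as equivalent. If the reformulation needs non-pure forms, I would apply the same argument componentwise using the real operators $d$ and $d^c$.

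For the final claims, a Mo\v{\i}\v{s}ezon manifold (respectively a manifold in Fujiki class $\mathcal C$) is by Moishezon's theorem (respectively by Varouchas' result) the image of a projective (respectively compact Kähler) manifold under a holomorphic bimeromorphic modification. Compact Kähler manifolds satisfy the $\partial\overline\partial$-lemma by Hodge theory. The first part of the statement then applies.
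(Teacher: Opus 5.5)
Your argument is correct. It is the Deligne--Griffiths--Morgan--Sullivan proof that the paper invokes by citation, since the paper gives no proof of its own: pull back, apply the lemma on $X$, push forward as currents using $f_*f^*=\mathrm{id}$ (as $\deg f=1$), and return to smooth forms by the regularity of Bott--Chern cohomology. The de Rham injectivity paragraph is not needed. Your hedge about formulations is also unnecessary, because the same pullback/pushforward argument works verbatim for the paper's defining condition, injectivity of $H_{BC}\to H_A$: if $\gamma=\partial\alpha+\overline\partial\beta$ on $Y$, then $f^*\gamma=\partial f^*\alpha+\overline\partial f^*\beta$ on $X$.
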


\begin{theorem}[{\cite{stelzig-JLMS}; see also \cite{angella-suwa-tardini-tomassini, rao-yang-yang}}]
Let $X$ be a compact holomorphic manifold, and let $Z \subseteq X$ be a holomorphic submanifold. If both $X$ and $Z$ satisfy the $\partial\overline\partial$-Lemma property, then the blowup of $X$ along $Z$ also satisfies it. 
In particular, for compact complex threefolds, the $\partial\overline\partial$-Lemma property is invariant under modifications.
\end{theorem}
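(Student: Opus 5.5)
The plan is to read both statements off the decomposition of the double complex into squares and zigzags (the structure theorem of \cite{stelzig-JLMS, khovanov-qi}), together with the blowup formula for the double complex. For the blowup $\tilde X := \mathrm{Bl}_Z X$ along a submanifold $Z$ of codimension $r \geq 2$, I would first establish an isomorphism of double complexes up to quasi-isomorphism, in the sense of \cite{stelzig-JLMS}:
\[
\Omega^{\bullet,\bullet}(\tilde X) \simeq \Omega^{\bullet,\bullet}(X) \oplus \bigoplus_{i=1}^{r-1} \Omega^{\bullet,\bullet}(Z)[i,i] ,
\]
where $[i,i]$ shifts the bidegree by $(i,i)$. "Up to quasi-isomorphism" means up to maps that induce isomorphisms on the multiplicities of all zigzags (all squares are discarded). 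This mirrors the classical decomposition of de Rham and Dolbeault cohomology of a blowup.

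To build the map, I would use $\pi^*$ on the first summand. On the $Z$-summands I would use $j_* \circ (h^{i-1}\wedge \cdot) \circ p^*$, where $E = \mathbb P(N_{Z/X}) \xrightarrow{p} Z$ is the exceptional divisor, $j\colon E \hookrightarrow \tilde X$ is the inclusion, and $h$ is a closed $(1,1)$-form representing $c_1(\mathcal O_E(-1))$. The pushforward $j_*$ lands in currents, so I would work with the double complex of currents, which has the same zigzag multiplicities as that of forms by elliptic regularity. The projective bundle formula for $E$ is an easy case: $p^*$ and powers of $h$ give a zigzag-preserving decomposition, by a Leray--Hirsch argument done bigraded-wise.

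The main obstacle is proving that the total map induces isomorphisms on the zigzag multiplicities, not just on Dolbeault or de Rham cohomology. I would reduce this as follows. By \cite{stelzig-JLMS}, a morphism of double complexes is an "E$_1$-isomorphism" iff it induces isomorphisms in both $H_{\overline\partial}$ and $H_{\partial}$, and such a morphism preserves all zigzag multiplicities. So it suffices to prove the Dolbeault blowup formula. That formula is classical, via the sheaf-theoretic argument with $R\pi_*\mathcal O$ and the Leray spectral sequence, or via the relative Dolbeault sequence; the conjugate formula then follows by complex conjugation.

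Given the decomposition, the first claim is immediate. Under the characterization stated above, the $\partial\overline\partial$-Lemma property means "only squares and dots". If $X$ and $Z$ have only dots as zigzags, then so do the shifted copies $\Omega(Z)[i,i]$. Hence $\tilde X$ has only dots, i.e.\ it satisfies the $\partial\overline\partial$-Lemma property.

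For threefolds I would argue as follows. By weak factorization (Abramovich--Karu--Matsuki--W\l odarczyk, valid for compact complex manifolds), any bimeromorphic map factors into blowups and blowdowns along smooth centers. In dimension $3$ the centers are points or smooth curves, and these satisfy the $\partial\overline\partial$-Lemma trivially, since curves are Kähler. The decomposition then shows that the zigzags of $\tilde X$ are those of $X$ plus dots. So $X$ satisfies the $\partial\overline\partial$-Lemma iff $\tilde X$ does, which is invariance in both directions along the chain.
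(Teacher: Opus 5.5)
Your route is the one used in the source the paper cites, \cite{stelzig-JLMS}. You establish an $E_1$-isomorphism $\Omega^{\bullet,\bullet}(\tilde X)\simeq\Omega^{\bullet,\bullet}(X)\oplus\bigoplus_{i=1}^{r-1}\Omega^{\bullet,\bullet}(Z)[i,i]$, realized by $\Phi=\pi^*+\sum_i j_*(h^{i-1}\wedge p^*(\cdot))$ into currents, and then read the $\partial\overline\partial$-Lemma as ``only squares and dots''. Both deductions you draw from the decomposition are correct, including the threefold case via weak factorization \cite{abramovich-karu-matsuki-wlodarczyk}, where the point and curve centers contribute only dots.

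The gap is in the one step that carries all the content: ``it suffices to prove the Dolbeault blowup formula, classical via $R\pi_*\mathcal O$ and Leray''. Stelzig's criterion requires that your specific $\Phi$ induce an isomorphism on $H_{\overline\partial}$. An abstract isomorphism, i.e.\ equal dimensions, proves nothing, because Dolbeault and conjugate Dolbeault numbers do not determine zigzag multiplicities. For example, take the two length-three zigzags supported on $\{(0,0),(1,0),(0,1)\}$ and on $\{(1,0),(0,1),(1,1)\}$. Together they have the same $h^{p,q}_{\overline\partial}$, $h^{p,q}_{\partial}$ and Betti numbers as two dots in bidegrees $(1,0)$ and $(0,1)$. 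Yet the first configuration violates the $\partial\overline\partial$-Lemma and the second satisfies it. Moreover, $R\pi_*\mathcal O_{\tilde X}=\mathcal O_X$ only covers $p=0$, where no $Z$-summand appears at all. What you need is $R\pi_*\Omega^p_{\tilde X}$ for every $p$, plus degeneration of Leray and compatibility with $\Phi$.

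To close the gap, regard $\Phi$ as a morphism of complexes of sheaves on $X$,
\[
\mathcal{A}^{p,\bullet}_X\oplus\bigoplus_{i=1}^{r-1}\iota_*\mathcal{A}^{p-i,\bullet-i}_Z\longrightarrow\pi_*\mathcal{D}^{p,\bullet}_{\tilde X},
\]
where $\iota\colon Z\hookrightarrow X$, $\mathcal A$ denotes smooth forms and $\mathcal D$ denotes currents. Both sides consist of fine sheaves. Their global sections compute $H^{p,\bullet}_{\overline\partial}(X)\oplus\bigoplus_i H^{p-i,\bullet-i}_{\overline\partial}(Z)$ and $H^{p,\bullet}_{\overline\partial}(\tilde X)$ respectively. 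So it is enough that $\Phi$ be a quasi-isomorphism on stalks, namely that it induce
\[
\Omega^p_X\simeq\pi_*\Omega^p_{\tilde X}
\quad\text{and}\quad
\iota_*\Omega^{p-b}_Z\simeq R^b\pi_*\Omega^p_{\tilde X}\ \text{ for } 1\leq b\leq r-1,
\]
with $R^b\pi_*\Omega^p_{\tilde X}=0$ for $b\geq r$. This is local near $Z$, so it reduces to the blowup of a polydisc along a coordinate subspace and a computation along the $\mathds{CP}^{r-1}$-fibres. It is the real content of the Dolbeault blowup formula (cf.\ \cite{rao-yang-yang}). Proving it for $\Phi$ itself gives Leray degeneration and the identification with $\Phi$ at once. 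Conjugation then handles $H_\partial$, as you say, and the rest of your argument goes through.
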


\begin{theorem}[{\cite{voisin-FR, wu, angella-tomassini-InvMath}}]
Satisfying the $\partial\overline\partial$-Lemma property is a stable property under small deformations of the
complex structure, in the sense of \cite{kodaira-book}.
\end{theorem}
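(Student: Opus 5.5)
The plan is to use the numerical characterization of the $\partial\overline\partial$-Lemma from the theorem of \cite{angella-tomassini-InvMath} stated above: $X$ satisfies the property if and only if $\Delta_k=0$ for every $k$. The Betti numbers $b_k$ depend only on the underlying differentiable manifold, so they are constant along a smooth family $\{X_t\}_{t\in B}$ of compact complex manifolds with $X_0=X$ (by Ehresmann's theorem all fibres are diffeomorphic). It therefore suffices to show that $t\mapsto \sum_{p+q=k}\bigl(h^{p,q}_{BC}(X_t)+h^{p,q}_A(X_t)\bigr)$ is upper semicontinuous in $t$. Once we have this, $\Delta_k(X_t)\le \Delta_k(X_0)=0$ for $t$ near $0$. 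Combined with $\Delta_k(X_t)\ge 0$, this gives $\Delta_k(X_t)=0$ for all $k$ (only finitely many $k$ occur), and so $X_t$ satisfies the $\partial\overline\partial$-Lemma.

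For the semicontinuity I would follow the Kodaira--Spencer strategy. Fix a smooth family of Hermitian metrics $g_t$ and consider the Bott--Chern Laplacian
\[
\tilde\Delta_{BC,t}=\partial\overline\partial(\partial\overline\partial)^*+(\partial\overline\partial)^*\partial\overline\partial+(\overline\partial^*\partial)(\overline\partial^*\partial)^*+(\overline\partial^*\partial)^*\overline\partial^*\partial+\overline\partial^*\overline\partial+\partial^*\partial,
\]
built from $\partial_t,\overline\partial_t$. One checks that it is a fourth-order elliptic, formally self-adjoint operator whose kernel is isomorphic to $H^{p,q}_{BC}(X_t)$. This Hodge theory is deferred to \Cref{sec:hodge}, and I would assume it. Since the bidegree splitting $\wedge^{p,q}X_t$ varies smoothly with $t$, we can transport the bundles to a fixed one via smooth identifications. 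Then $\tilde\Delta_{BC,t}$ becomes a smoothly varying family of elliptic self-adjoint operators on a fixed bundle. The Kodaira--Spencer upper semicontinuity theorem for kernels of such families then gives upper semicontinuity of $h^{p,q}_{BC}(X_t)$. For Aeppli, I would use the duality $h^{p,q}_A=h^{n-p,n-q}_{BC}$ (or the analogous Aeppli Laplacian).

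The main obstacle is verifying ellipticity of $\tilde\Delta_{BC}$ and the identification of its kernel with $\ker\partial\cap\ker\overline\partial\cap\ker(\partial\overline\partial)^*$. Ellipticity requires computing the principal symbol, which should be $|\xi|^4$ up to a positive constant. The extra second-order terms $\overline\partial^*\overline\partial+\partial^*\partial$ only affect lower order, so one has to check that the fourth-order part alone is elliptic; this is where the mixed terms $\overline\partial^*\partial$ are needed. For the kernel, pairing $\tilde\Delta_{BC}\alpha$ with $\alpha$ writes it as a sum of norms; vanishing of all terms gives $\partial\alpha=\overline\partial\alpha=0$ and $(\partial\overline\partial)^*\alpha=0$, and the orthogonal decomposition $\ker\partial\cap\ker\overline\partial=\im\partial\overline\partial\oplus\ker\tilde\Delta_{BC}$ follows from standard elliptic theory. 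Everything else is the classical argument.
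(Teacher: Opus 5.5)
Your proposal is correct and follows the same route the paper indicates in the hint to the corresponding exercise. You combine the upper semicontinuity of $h^{p,q}_{BC}$ and $h^{p,q}_{A}$, obtained from Kodaira--Spencer theory for the fourth-order elliptic operator $\tilde\Delta_{BC}$ (with Schweitzer duality for Aeppli), with the constancy of the Betti numbers and the numerical characterization ($\Delta_k\geq 0$, with $\Delta_k=0$ for all $k$ iff the $\partial\overline\partial$-Lemma holds). Your check of the principal symbol and of the harmonic kernel supplies the Hodge-theoretic input that the paper takes for granted.
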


\subsection{Computations of complex cohomologies for nilmanifolds and solvmanifodls}

The explicit computation of cohomology groups of holomorphic manifolds is not an easy problem in general. However, there are classes of manifolds where such computations can be reduced to algebraic equations. This is the case for instance of nilmanifolds and (some) solvmanifolds. First of all, we recall the Chevalley-Eilenberg complex of a Lie algebra.

Let $\mathfrak{g}$ be a Lie algebra and denote with $\left\lbrace x_1,\dots,x_s\right\rbrace$ a basis for $\mathfrak{g}$ and with $\left\lbrace \alpha_1,\dots,\alpha_s\right\rbrace$ the dual basis for $\mathfrak{g}^*$. Define a differential operator $d$ on the exterior algebra $\wedge\mathfrak{g}^*$ as follows. On degree $1$ elements, set
$$
d\alpha^k(x_i,x_j)=-\alpha^k([x_i,x_j]),
$$
and then extend this definition to $\wedge\mathfrak{g}^*$ as a graded bilinear derivation. Setting $[x_i,x_j]=\sum_\ell c_{ij}^\ell x_\ell$, by duality one has $d\alpha^k(x_i,x_j)=-c_{ij}^k$ and so on generators
$$ d\alpha^k=-\sum_{i<j}c_{ij}^k\alpha^i\wedge\alpha^j. $$
As a general formula for $\alpha\in\wedge^k\mathfrak{g}^*$ and $y_1,\dots,y_{k+1}\in\mathfrak{g}$, one has
$$
d\alpha(y_1,\dots,y_{k+1})=
\sum_{1\leq i<j\leq k+1} (-1)^{i+j}\alpha([y_i,y_j], y_1,\dots,\hat{y_i},\dots, \hat{y_j},\dots,y_{k+1})\,.
$$ The Jacobi identity turns out then to be equivalent to $d^2=0$.

The complex $(\wedge\mathfrak{g}^*,d)$ is called the \emph{Chevalley-Eilenberg complex of $\mathfrak{g}$}. If $G$ is a Lie group with Lie algebra $\mathfrak g\simeq T_eG$, any element of $a\in\wedge\mathfrak g^*$ induces a differential form $\alpha$ on $G$ such that $\alpha_e=v$, by means of the differential operators associated to left-translations, and $(d\alpha)_e=dv$. Conversely, any form $\alpha$ on $G$ (or any of its smooth quotients) that is invariant under the action of left-translations (namely $L_g^*\alpha=\alpha$ for any $g\in G$) is completely determined by its value $\alpha_e\in \wedge\mathfrak g^*$. For this reason, the Chevalley-Eilenberg complex can be identified with the subspace of invariant forms (with respect to left-translations, also called left-invariant forms).
The cohomology of the Chevalley-Eilenberg complex $(\wedge\mathfrak{g}^*,d)$ will be denoted by $H^{\bullet}_d(\mathfrak{g},\mathds{R})$.

In the following, we will be interested in the special case where $\mathfrak{g}$ is nilpotent. Recall that if $\mathfrak{g}$ is a Lie algebra we can consider the ascending central series $\mathcal{Z}^0\subseteq \mathcal{Z}^1\subseteq\mathcal{Z}^2\subseteq\cdots$ where $\mathcal{Z}^0:=\mathfrak{g}$ and
$\mathcal{Z}^{i+1}:=\left\lbrace
x\in\mathfrak{g}\,:\,[x,\mathfrak{g}]\subseteq \mathcal{Z}^i\right\rbrace$. In particular, notice that $\mathcal{Z}^1=\mathcal{Z}$ is the center of $\mathfrak{g}$.
The Lie algebra $\mathfrak{g}$ is called \emph{nilpotent} if there exists $n\in\mathds{N}$ such that $\mathcal{Z}^n=\mathfrak{g}$ and the smallest of such $n$ is called the \emph{nilpotency step of} $\mathfrak{g}$. A similar definition can be given at the level of Lie groups and the corresponding subalgebras and subgroups are related via the exponential map. A Lie group is in fact called nilpotent if its Lie algebra is nilpotent.

A \emph{nilmanifold} $\Gamma\backslash G$ is the quotient of a connected, simply-connected nilpotent Lie group $G$ by a lattice $\Gamma$ ({\itshape i.e.} a co-compact discrete subgroup). The following theorem states that the de Rham cohomology of nilmanifolds can be computed using only invariant forms.

\begin{theorem}[{\cite{nomizu}}]
Let $X=\Gamma\backslash G$ be a nilmanifold and denote with $\mathfrak{g}$ the Lie algebra of $G$. Then, the inclusion of invariant forms in the space of forms $\wedge\mathfrak{g}^* \hookrightarrow \Omega^\bullet(X)$ induces an isomorphism in cohomology, namely
$$
H^{\bullet}_d(\mathfrak{g},\mathds{R})
\simeq
H^{\bullet}_{dR}(X,\mathds{R})\,.
$$
\end{theorem}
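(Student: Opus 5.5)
We prove Nomizu's theorem by induction on $\dim G$, using the fibration structure coming from the center. By Malcev's theory, the lattice $\Gamma$ meets the center in a lattice. Choose a one-dimensional subgroup $Z\subseteq Z(G)$ with $\Gamma\cap Z\simeq\mathds Z$; such a choice is possible because $\Gamma$ admits a rational structure, so the center contains rational one-dimensional subspaces. Put $\bar G:=G/Z$, which is again connected, simply connected and nilpotent, and $\bar\Gamma:=\Gamma/(\Gamma\cap Z)$, a lattice in $\bar G$. Then
\[
S^1=\sfrac{Z}{\Gamma\cap Z}\to X=\Gamma\backslash G\to \bar X=\bar\Gamma\backslash\bar G
\]
is a principal circle bundle. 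The base case $\dim G=1$ (or more generally $G$ abelian, so that $X$ is a torus) is classical: the invariant forms with constant coefficients are exactly the harmonic forms for the flat metric, so Hodge theory gives the isomorphism. By the induction hypothesis we may assume $\wedge\bar{\mathfrak g}^*\hookrightarrow\Omega^\bullet(\bar X)$ is a quasi-isomorphism.

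Next we compare two Gysin sequences. On the algebraic side, let $z\in\mathfrak z$ span $\mathrm{Lie}(Z)$ and pick $\theta\in\mathfrak g^*$ with $\theta(z)=1$. Since $z$ is central, $d\theta$ is basic, i.e.\ it lies in $\wedge^2\bar{\mathfrak g}^*$, and we get a splitting
\[
\wedge\mathfrak g^*=\wedge\bar{\mathfrak g}^*\oplus\theta\wedge\wedge\bar{\mathfrak g}^*,
\]
which yields a short exact sequence of complexes
\[
0\to\wedge\bar{\mathfrak g}^*\to\wedge\mathfrak g^*\xrightarrow{\iota_z}\wedge\bar{\mathfrak g}^*[-1]\to0,
\]
whose connecting morphism is multiplication by $[d\theta]$. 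On the geometric side, the invariant form $\theta$ descends to a connection form on the circle bundle with curvature $d\theta$. The complex $\Omega^\bullet(X)^{S^1}$ of forms invariant under the circle action fits into the analogous sequence
\[
0\to\Omega^\bullet(\bar X)\to\Omega^\bullet(X)^{S^1}\to\Omega^{\bullet-1}(\bar X)\to0,
\]
with the same connecting morphism, multiplication by the Euler class $[d\theta]$. The inclusions of invariant forms give a morphism between these two short exact sequences. The outer terms are quasi-isomorphic by induction, so the five lemma applied to the long exact sequences shows that $\wedge\mathfrak g^*\to\Omega^\bullet(X)^{S^1}$ is a quasi-isomorphism.

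It remains to show that $\Omega^\bullet(X)^{S^1}\hookrightarrow\Omega^\bullet(X)$ is a quasi-isomorphism. Since $S^1$ is a compact connected group acting on $X$, we average over the action: $\rho(\alpha)=\int_{S^1}t^*\alpha\,dt$ commutes with $d$. Moreover, each $t^*$ is homotopic to the identity, and Cartan's formula $t_s^*\alpha-\alpha=d\,h\alpha+h\,d\alpha$ with $h=\int_0^s t_u^*\iota_z\,du$ shows that $\rho$ induces the identity on cohomology. Hence $\rho$ is a homotopy inverse to the inclusion on cohomology.

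The main obstacle is the first step. We need that $\Gamma\cap Z(G)$ is a lattice in $Z(G)$ and that $\Gamma/(\Gamma\cap Z)$ is a lattice in $G/Z$. This is where Malcev's rationality theorem is genuinely needed: the structure constants can be chosen rational with respect to a basis in $\log\Gamma$. We would cite it rather than reprove it.
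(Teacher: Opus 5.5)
Your proof is correct, but it takes a different route from the paper's, even though both are inductions over central fibrations.

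\textbf{The paper's route.} It inducts on the nilpotency step and quotients by the whole center, obtaining a principal torus bundle $T\to X\to X'$. It then compares two spectral sequences: the Leray--Serre spectral sequence of $\Omega^\bullet(X)$ filtered by horizontal degree, with $E_2=H_{dR}(X')\otimes H_{dR}(T)$, and the Hochschild--Serre spectral sequence of $0\to\mathfrak h\to\mathfrak g\to\mathfrak g/\mathfrak h\to 0$. The inclusion of invariant forms respects the filtrations and is an isomorphism on $E_2$ by induction and the torus case, hence also on the limits.

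\textbf{Your route.} You split off one rational central circle at a time and induct on $\dim G$. You replace both spectral sequences by two Gysin short exact sequences, compared via the five lemma. This forces the extra averaging step reducing $\Omega^\bullet(X)$ to $\Omega^\bullet(X)^{S^1}$, which is exactly what makes the splitting $\beta+\theta\wedge\gamma$ with basic $\beta,\gamma$ available on the geometric side.

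\textbf{What your route buys.} It is more elementary and self-contained: only long exact sequences and Cartan's formula are needed. There is no spectral sequence comparison theorem and no identification of the $E_2$ page of a fibration. That identification requires the local system $H^\bullet(T)$ over $X'$ to be trivial, which holds because the structure group is connected, a point the paper leaves implicit.

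\textbf{What it costs.} You need a slightly finer input from Malcev theory: an infinite cyclic subgroup of the lattice $\Gamma\cap Z(G)$, and closedness of $\Gamma Z$ so that $\Gamma/(\Gamma\cap Z)$ is a lattice in $G/Z$. The paper takes the corresponding lattice statements for granted as well.

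\textbf{What the paper's route buys.} It treats a whole torus fibre at once, and it is the template that adapts to Dolbeault cohomology. There the fibres of the tower must be complex tori, so peeling off single real circles is not available; this is precisely the obstruction (odd-dimensional center) discussed right after the proof.
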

We recall now the main steps of the proof and we refer also to \cite{rollenske-Nilmanifold} for a survey on the subject.

\begin{proof}
First of all notice that, denoting with $ZG$ the center of $G$, then $Z\Gamma:=\Gamma\cap ZG$ is still a lattice. Since elements in $ZG$ commute with elements in $\Gamma$, the principal $ZG$-bundle
\[
\begin{tikzcd}
ZG \arrow[r] & G \arrow[d] \\
           & \frac{G}{ZG}
\end{tikzcd}
\]
descends to a principal torus bundle
\[
\begin{tikzcd}
T \arrow[r] & X \arrow[d] \\
           & X'
\end{tikzcd}
\]
with fibers real tori $T=\frac{ZG}{Z\Gamma}$.

Since $G$ is nilpotent, this process can be iterated using the ascending central series
$$0=\mathcal{Z}^0\subseteq \mathcal{Z}^1\subseteq\mathcal{Z}^2\subseteq\cdots\subseteq \mathcal{Z}^n=\mathfrak{g}.$$
Indeed, all $\mathcal{Z}^i$ are ideal and $\mathcal{Z}^{i-1}\backslash\mathcal{Z}^i$ is central in $\mathcal{Z}^{i-1}\backslash \mathfrak{g}$ and in particular $\mathcal{Z}^{i-1}\backslash\mathcal{Z}^i$ is abelian.
This corresponds to a filtration of normal subgroups
$$1=Z^0\subseteq Z^1\subseteq Z^2\subseteq\cdots\subseteq Z^n=G$$
with $Z^{i-1}\backslash Z^i$ abelian and $Z^i\Gamma:=\Gamma\cap Z^i$ lattice in $Z^i$.
More precisely, denoting with $
T_i$ the torus obtained as a quotient of
$Z^{i-1}\backslash Z^i$ by $Z^{i-1}\Gamma\backslash Z^i\Gamma$, then there is a tower of principal $T_i$-bundles
\[
\begin{tikzcd}
T_1 \arrow[r] & X_1 \arrow[d, "\pi_1"] \\
T_2 \arrow[r]& X_2 \arrow[d, "\pi_2"] \\
& \vdots \arrow[d] \\
T_{n-1} \arrow[r]& X_{n-1} \arrow[d, "\pi_{n-1}"] \\
& X_n = T_n
\end{tikzcd}
\]
where $X_i$ are nilmanifolds obtained as quotients of $Z^{i-1}\backslash G$ and $Z^{i-1}\Gamma\backslash \Gamma$.

For the proof one can argue now by induction on the nilpotency step $n$. If $n=1$, then $G$ is abelian and so $X$ is a torus and the result is well-known.
For the induction step, assume that we have a principal $T$-bundle over a nilmanifold of lower nilpotency step
\[
\begin{tikzcd}
T \arrow[r] & X \arrow[d] \\
           & X'\,.
\end{tikzcd}
\]
The idea is to describe the cohomology of $X$ using the cohomologies of the fiber $T$ and the base $X'$ and 
this is done using two spectral sequences.

First of all, consider the following filtration on the space of forms $\Omega^\bullet(X)$,
$$
F^p\Omega^{p+q}(X):=
\left\lbrace
\omega\in\Omega^{p+q}(X)\,:\,
\omega(v_1,\dots,v_{p+q})=0 \text{\,for any\,} v_{i_1},\dots,v_{i_{q+1}}\in\ker d\pi
\right\rbrace\,.
$$
This filtration determines a spectral sequence $\left\lbrace (E_r^{\bullet,\bullet},d_r)\right\rbrace_{r\geq 0}$ which computes the cohomology of the complex $(\Omega^\bullet(X),d)$. This spectral sequence is called \emph{Leray-Serre spectral sequence} and in this case one has
$$
E^{p,q}_2=H^{p}_{dR}(X')\otimes H^q_{dR}(T)\Rightarrow H^{p+q}_{dR}(X)\,.
$$
Now one can argue similarly at the level of invariant forms. The fibration
\[
\begin{tikzcd}
T \arrow[r] & X \arrow[d] \\
           & X'\,.
\end{tikzcd}
\]
corresponds to a short exact sequence of Lie algebras
$$
0\to \mathfrak{h}\to \mathfrak{g}\to \frac{\mathfrak{g}}{\mathfrak{h}}\to 0\,.
$$
For $\wedge^\bullet\mathfrak{g}^*$ one obtains the \emph{Hochschild-Serre spectral sequence} $\left\lbrace (\tilde E_r^{\bullet,\bullet},d_r)\right\rbrace_{r\geq 0}$ and in this case one has
$$
\tilde E^{p,q}_2=H^{p}_{d}\left(\frac{\mathfrak{g}}{\mathfrak{h}}\right)\otimes H^q_{d}(\mathfrak{h})\Rightarrow H^{p+q}_{d}(\mathfrak{g})\,.
$$
Now, since the inclusion of invariant forms in the space of forms, $\wedge\mathfrak{g}^* \hookrightarrow \Omega^\bullet(X)$, is compatible with the filtrations, one has a homomorphism of spectral sequences. In particular, at the second page, one has an isomorphism by induction hypothesis:
$$
H^{p}_{dR}(X')\otimes H^q_{dR}(T)\to H^{p}_{dR}\left(\frac{\mathfrak{g}}{\mathfrak{h}}\right)\otimes H^q_{dR}(\mathfrak{h})\,.
$$
As a consequence, passing to the limit, we have the desired isomorphism,
$$
H^{p+q}_{dR}(X)\simeq H^{p+q}_{d}(\mathfrak{g}) ,
$$
completing the proof of the theorem
\end{proof}

Nomizu's theorem states that the de Rham cohomology of the nilmanifold \( M \) is entirely determined by the finite-dimensional complex \( \wedge^\bullet \mathfrak{g}^* \), which indeed serves as a Sullivan model \cite{sullivan}. This result extends more generally to compact quotients of completely-solvable Lie groups \cite{hattori}, but not to arbitrary solvmanifolds \cite{debartolomeis-tomassini-AIF}. Nevertheless, various techniques have been developed to compute the de Rham cohomology for solvmanifolds \cite{guan, console-fino-SNS, console-fino-kasuya}.

Analogous results hold for the Dolbeault cohomology of special classes of complex structures on nilmanifolds, see  for instance \cite{console-fino, rollenske-Nilmanifold} and the references therein. More precisely,
if $J$ is a complex structure on $\mathfrak{g}$, then denoting with $\mathfrak{g}_{\mathds{C}}$ the complexified Lie algebra, one has the usual decompositions
$$
\mathfrak{g}_{\mathds{C}}^*=
\mathfrak{g}^{*1,0}\oplus\mathfrak{g}^{*0,1}
$$
and
$$
\wedge^{p,q}\mathfrak{g}^*=
\wedge^p\mathfrak{g}^{*1,0}\otimes\wedge^q\mathfrak{g}^{*0,1}
$$
and accordingly $d=\partial+\overline\partial$. Then one has the complex $(\wedge^{p,\bullet}\mathfrak{g}^*,\overline\partial)$ and the associated Dolbeault cohomology $H^{p,q}_{\overline\partial}(\mathfrak{g}_{\mathds{C}})=H^q(\wedge^{p,\bullet}\mathfrak{g}^*,\overline\partial)$.
It is conjectured that, for a nilmanifold $X=\Gamma\backslash G$ with invariant complex structure $J$, the analogue of Nomizu's Theorem should hold for the Dolbeault cohomology, that is
$$
H^{p,q}_{\overline\partial}(\mathfrak{g}_{\mathds{C}})\simeq H^{p,q}_{\overline\partial}(X).
$$
The problem in adapting Nomizu's argument is that not every nilmanifold with invariant complex structure is a holomorphic principal bundle, for instance the projections in the tower bundle above could have odd-dimensional center. The conjecture is known to be true in some special cases, for instance when:
\begin{itemize}
    \item[-] $X$ is complex parallelizable \cite{sakane};
    \item[-] $J$ is an abelian complex structure \cite{console-fino-poon};
    \item[-] $J$ is a nilpotent complex structure \cite{cordero-fernandez-gray-ugarte};
    \item[-] $J$ is a rational complex structure \cite{console-fino};
    \item[-] $X$ is a $6$-dimensional nilmanifold \cite{fino-rollenske-ruppenthal}.
\end{itemize}

Kasuya \cite{kasuya-MathZ} developed techniques for computing the Dolbeault cohomology of solvmanifolds. These techniques can also be extended to Bott-Chern and Aeppli cohomologies \cite{angella-JGA, angella-kasuya-AGAG}.
\begin{theorem}[\cite{angella-JGA, angella-kasuya-AGAG}]
Let $X=\Gamma\backslash G$ be a solvmanifold endowed with an invariant complex structure $J$. If 
$$
H^{\bullet}_{d}(\mathfrak{g})\simeq H^{\bullet}_{dR}(X)
\quad\text{and}\quad
H^{\bullet,\bullet}_{\overline\partial}(\mathfrak{g}_{\mathds{C}})\simeq H^{\bullet,\bullet}_{\overline\partial}(X)
$$
then
$$
H^{\bullet,\bullet}_{BC}(\mathfrak{g}_{\mathds{C}})\simeq H^{\bullet,\bullet}_{BC}(X)\,.
$$
\end{theorem}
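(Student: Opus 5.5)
The plan is to compare the invariant sub-double complex $\wedge^{\bullet,\bullet}\mathfrak g_{\mathds C}^*$ with the full double complex $\Omega^{\bullet,\bullet}(X)$. Via Stelzig's decomposition into squares and zigzags, the inclusion $\iota$ is an \emph{$E_1$-quasi-isomorphism}: by hypothesis it is an isomorphism on Dolbeault cohomology, and by conjugation it is also one on $H_\partial$. The goal is then to show that an $E_1$-isomorphism of double complexes induces an isomorphism on Bott–Chern cohomology. Since the statement concerns solvmanifolds, I will not use the Dolbeault isomorphism abstractly only. Instead I will use a retraction: average over $G$ acting on $\Gamma\backslash G$ with a Haar measure. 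This uses unimodularity, which holds because $\Gamma\backslash G$ is compact. The averaging gives a map $\mu\colon\Omega^{\bullet,\bullet}(X)\to\wedge^{\bullet,\bullet}\mathfrak g^*_{\mathds C}$ with $\mu\circ\iota=\mathrm{id}$, and, provided $J$ is invariant, $\mu$ commutes with $\partial$ and $\overline\partial$. Then $\iota_*$ on $H_{BC}$ is injective, since $\mu_*\iota_*=\mathrm{id}$. The same holds for $H_A$, $H_{dR}$ and $H_{\overline\partial}$. For general solvmanifolds (as in Kasuya's setting) the averaging may fail to be a chain map with respect to $J$, so the main argument should avoid it. I would use it only as a shortcut for injectivity where available.

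The main route uses Varouchas' exact sequences \eqref{eq:succesatta-1} and \eqref{eq:succesatta-2} together with the dimension identity from the proof of the Angella–Tomassini inequality:
\[
h^{p,q}_{BC}+h^{q,p}_{A}=h^{p,q}_{\overline\partial}+h^{p,q}_{\partial}+f^{p,q}+a^{p,q}.
\]
Both complexes are finite-dimensional in each bidegree, the invariant one trivially and $X$ by compactness. Everything is natural in $\iota$, so I reduce to showing that $\iota$ induces isomorphisms on the Varouchas spaces $A,B,C,D,E,F$. Then a five-lemma argument on \eqref{eq:succesatta-2} gives $H_{BC}(\mathfrak g_{\mathds C})\simeq H_{BC}(X)$.

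To get the isomorphisms on $A,\dots,F$, I filter the complexes and reason zigzag-by-zigzag. By Stelzig's theorem each complex is a direct sum of squares and zigzags. A morphism inducing isomorphisms on $H_{\overline\partial}$ and $H_\partial$ is an isomorphism on the ``zigzag part'', that is, on multiplicities of each zigzag shape. The point is that Dolbeault and conjugate Dolbeault cohomology together detect every zigzag, since a zigzag's endpoints contribute to one of them. Bott–Chern cohomology depends only on these multiplicities, so this gives the claim. The de Rham hypothesis enters because the Dolbeault/conjugate data determine zigzags only up to the pages of the Frölicher spectral sequence. Agreement of $H_{dR}$ together with the $E_1$-pages pins down the odd-length zigzags, which is the one remaining ambiguity.

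I expect the main obstacle to be this functoriality step. A morphism of double complexes that is an isomorphism on $H_{\overline\partial}$ need not a priori respect the abstract direct-sum decomposition, so the multiplicity count must be made natural. I would first try Stelzig's characterization of $E_1$-quasi-isomorphisms, which should give isomorphisms on all $E_r$ pages and hence on $H_{BC}$ and $H_A$. If that is unavailable, I would prove directly, by diagram chasing, that $\iota$ is surjective on $\ker\partial\cap\ker\overline\partial$ modulo $\im\partial\overline\partial$. Given a $BC$-class, I would correct it step by step, using the Dolbeault isomorphism to absorb the $\overline\partial$-part and the de Rham isomorphism to absorb the $d$-exact part. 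Injectivity would then follow from averaging, or from the dimension count via Schweitzer duality $h^{p,q}_A=h^{n-p,n-q}_{BC}$.
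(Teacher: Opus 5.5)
\paragraph{The gap.} The decisive step is never proved: that an isomorphism on $H_{\overline\partial}$ and $H_\partial$ forces an isomorphism on $H_{BC}$. The justification you give for your main route is also false.

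Dolbeault and conjugate Dolbeault cohomology record only where zigzags start and end, not how the endpoints are paired, and de Rham cohomology does not resolve the pairing. Compare two double complexes:
\begin{itemize}
\item the sum of the length-three zigzags $(0,2)\xrightarrow{\partial}(1,2)\xleftarrow{\overline\partial}(1,1)$ and $(1,1)\xrightarrow{\partial}(2,1)\xleftarrow{\overline\partial}(2,0)$;
\item the sum of the length-five zigzag $(0,2)\to(1,2)\leftarrow(1,1)\to(2,1)\leftarrow(2,0)$ with a dot in bidegree $(1,1)$.
\end{itemize}
They have the same bigraded dimensions, the same $h^{p,q}_{\overline\partial}$ (nonzero at $(0,2),(1,1)$) and the same $h^{p,q}_\partial$ (at $(1,1),(2,0)$). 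They have the same total cohomology (two classes in degree $2$), and both Fr\"olicher spectral sequences degenerate at $E_1$ with identical pages. Yet $h^{1,1}_{BC}$ is $0$ for the first and $1$ for the second.

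Several consequences follow.
\begin{itemize}
\item No count of zigzag multiplicities from these data can work, even combined with injectivity from averaging. The same applies to obtaining the Varouchas spaces $A,F$ by counting, since they record interior vertices of zigzags.
\item Your claim that $H_{dR}$ ``pins down the odd-length zigzags'' is wrong. The de Rham hypothesis is in fact redundant: an isomorphism on $E_1=H_{\overline\partial}$ already gives one on $H_{dR}$, by comparing the bounded Fr\"olicher filtrations.
\item Your fallbacks do not close the gap. ``All $E_r$ pages, hence $H_{BC}$'' is a non sequitur, since the example has identical $E_r$ pages. Invoking Stelzig's characterization just cites the statement you need. The step-by-step correction of a Bott--Chern class is never carried out.
\end{itemize}

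\paragraph{How to close it.} The missing idea uses a tool you already have, and your doubt about it is unfounded. For a left-invariant $J$, the symmetrization $\mu$ preserves bidegrees and commutes with $d$, hence with $\partial$ and $\overline\partial$; only unimodularity of $G$ is used. It also commutes with conjugation. Since $\mu\circ\iota=\mathrm{id}$, you get a splitting of double complexes
\[
\Omega^{\bullet,\bullet}(X)=\iota\big(\wedge^{\bullet,\bullet}\mathfrak g^*_{\mathds C}\big)\oplus K, \qquad K:=\ker\mu ,
\]
and all the cohomologies involved are additive. The Dolbeault hypothesis gives $H_{\overline\partial}(K)=0$, and conjugation gives $H_\partial(K)=0$.

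A bounded double complex with $H_{\overline\partial}=H_\partial=0$ has $H_{BC}=0$. One way to see it: each zigzag contributes an endpoint class to $H_{\overline\partial}\oplus H_\partial$, so by the decomposition theorem such a complex is a sum of squares. Directly:
\begin{itemize}
\item if $\partial x=\overline\partial x=0$, write $x=\overline\partial y$;
\item solve the staircase $\partial y=\overline\partial z_1$, $\partial z_1=\overline\partial z_2,\dots$;
\item unwind it from the bounded end using $H_\partial=0$, obtaining $w$ with $y+\overline\partial w$ $\partial$-closed, hence $\partial$-exact, so $x\in\im\partial\overline\partial$.
\end{itemize}
Thus $H_{BC}(K)=0$, and $\iota$ is an isomorphism on $H_{BC}$ (and on $H_A$).

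The paper itself only cites this result. As I recall, the cited proofs use the same two ingredients, injectivity from invariance and surjectivity from a chase using the Dolbeault isomorphism. The splitting above makes both the Varouchas sequences and the de Rham hypothesis unnecessary.
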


\ifdefined\frommain
\else
    \bibliographystyle{alpha}
    \bibliography{biblio}
\fi

\subsection{Problems}

\begin{exercise}
Let $X$ be a differentiable manifold endowed with an almost-complex structure $J$. Prove that $J$ is integrable if and only if $d=\partial+\overline\partial$.
\end{exercise}

\begin{exercise}[{\cite[Lemma 5.15]{deligne-griffiths-morgan-sullivan}}]
Let $(A^{\bullet,\bullet},\partial,\overline\partial)$ be a bounded double complex of vector spaces, and denote by $(A^\bullet:=\bigoplus_{p+q=\bullet}A^{p,q},d=\partial+\overline\partial)$ the corresponding total complex.
Prove that, for any $k$, the following conditions are equivalent:
\begin{description}
\item[$(a)_n$] $\ker\partial\cap\ker\overline\partial\cap\im d=\im\partial\overline\partial$ in $A^k$,
\item[$(b)_n$] $\ker\overline\partial\cap\im\partial=\ker\partial\cap\im\overline\partial=\im\partial\overline\partial$ in $A^k$,
\item[$(c)_n$] $\ker\partial\cap\ker\overline\partial\cap(\im\partial+\im\overline\partial)=\im\partial\overline\partial$ in $A^k$,
\item[$(a^*)_{n-1}$] $\im\partial+\im\overline\partial+\ker d=\ker\partial\overline\partial$ in $A^{k-1}$,
\item[$(b^*)_{n-1}$] $\im\overline\partial+\ker\partial=\im\partial+\ker\overline\partial=\ker\partial\overline\partial$ in $A^{k-1}$,
\item[$(c^*)_{n-1}$] $\im\partial+\im\overline\partial+(\ker\partial\cap\ker\overline\partial)=\ker\partial\overline\partial$ in $A^{k-1}$.
\end{description}
\end{exercise}

\begin{exercise}[{\cite[Lemma 9]{console-fino}}]
Let $G$ be a Lie group, and denote by $\mathfrak g$ its associated Lie algebra. Let $\Gamma$ be a discrete subgroup, acting on $G$ on the left, such that the quotient $X=\sfrac{G}{\Gamma}$ is compact.
Consider the space of left-invariant forms on $G$, which is isomorphic to the finite-dimensional vector space $\wedge^\bullet\mathfrak g^*$ and induces a subcomplex $\Omega^\bullet (X)^G$ of the de Rham complex $(\Omega^\bullet(X),d)$. Prove that the inclusion induces an injective map in cohomology.
\end{exercise}

\begin{proof}[Hint]
Construct a left-invariant metric on $X$, and use the fact that each cohomology class admits a unique harmonic representative.
\end{proof}

\begin{exercise}
Let \( G = \mathrm{Heis}(3, \mathds{C}) \) be the three-dimensional Heisenberg group, consisting of matrices of the form
\[
\begin{pmatrix}
1 & z_1 & z_3 \\
0 & 1 & z_2 \\
0 & 0 & 1
\end{pmatrix},
\]
where \( z_1, z_2, z_3 \in \mathds{C} \).  
Consider the subgroup \( \Gamma \subset G \) consisting of matrices whose entries are Gaussian integers, acting on \( G \) by left multiplication.  
The quotient \( X = \sfrac{G}{\Gamma} \) is a compact complex manifold called the \emph{Iwasawa manifold}, whose holomorphic tangent bundle is trivial \cite{nakamura}.  
Compute the complex cohomologies of \( X \).
\end{exercise}

\begin{proof}[Hint]
Fix the left-invariant coframe
\[ \varphi^1 = dz_1 , \quad \varphi^2 = dz_2 , \quad \varphi^3 = dz_3 - z_1\,dz_2 \]
for the holomorphic cotangent bundle. The corresponding structure equations are:
\[ d\varphi^1 = 0, \quad d\varphi^2 = 0, \quad d\varphi^3 = -\varphi^1 \wedge \varphi^2. \]
Show that the double complex of left-invariant forms has the structure in \Cref{fig:iwasawa}.  
Then compute the various cohomologies by counting the contributions from each component, as depicted in \Cref{fig:cohomology-iwasawa}.
\end{proof}

\begin{figure}[p]
\centering
\includegraphics[width=5.5cm]{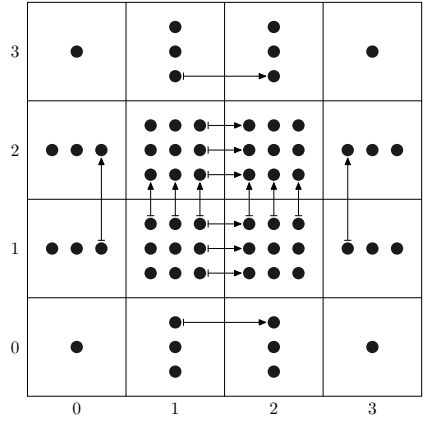}
\caption{\small The double complex of left-invariant forms on the Iwasawa manifold.}
\label{fig:iwasawa}
\end{figure}

\begin{figure}[p]
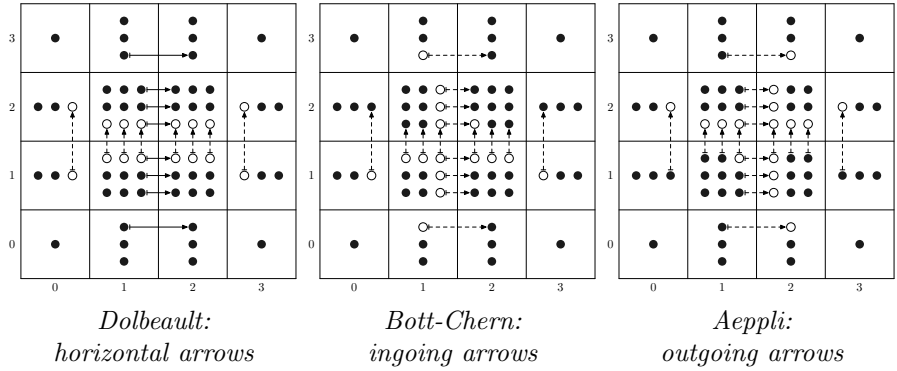

    \centering
    \begin{subfigure}[b]{0.28\textwidth}
        \includegraphics[width=\textwidth]{./images/iwasawa.2}
        \captionsetup{labelformat=empty, font={footnotesize,it}, justification=centering}
        \caption{Dolbeault:\\horizontal arrows}
    \end{subfigure}
    \begin{subfigure}[b]{0.28\textwidth}
        \includegraphics[width=\textwidth]{./images/iwasawa.4}
        \captionsetup{labelformat=empty, font={footnotesize,it}, justification=centering}
        \caption{Bott-Chern:\\ingoing arrows}
    \end{subfigure}
    \begin{subfigure}[b]{0.28\textwidth}
        \includegraphics[width=\textwidth]{./images/iwasawa.5}
        \captionsetup{labelformat=empty, font={footnotesize,it}, justification=centering}
        \caption{Aeppli:\\outgoing arrows}
    \end{subfigure}
    \caption{Cohomologies of the Iwasawa manifold.}
    \label{fig:cohomology-iwasawa}
\end{figure}

\begin{exercise}
Let \( X \) be a differentiable manifold. Define a partially defined operation on the de Rham cohomology \( H^\bullet_{dR}(X, \mathds{R}) \) as follows.
Let $a_{12} = [\alpha_{12}] \in H^{|\alpha_{12}|}_{dR}(X, \mathds{R})$, $a_{23} = [\alpha_{23}] \in H^{|\alpha_{23}|}_{dR}(X, \mathds{R})$, $a_{34} = [\alpha_{34}] \in H^{|\alpha_{34}|}_{dR}(X, \mathds{R})$ be three cohomology classes such that
\[
a_{12} \cdot a_{23} = 0 \quad \text{and} \quad a_{23} \cdot a_{34} = 0.
\]
This means there exist forms \( \alpha_{13} \) and \( \alpha_{24} \) such that
\[
(-1)^{|\alpha_{12}|}\alpha_{12} \wedge \alpha_{23} = d\alpha_{13}, \quad
(-1)^{|\alpha_{23}|}\alpha_{23} \wedge \alpha_{34} = d\alpha_{24}.
\]
Then the \emph{triple Massey product} is defined as
\[
\langle a_{12}, a_{23}, a_{34} \rangle := \left[ (-1)^{|\alpha_{12}|} \alpha_{12} \wedge \alpha_{24} + (-1)^{|\alpha_{13}|} \alpha_{13} \wedge \alpha_{34} \right] .
\]
Show that this class is well-defined in $H^{|\alpha_{12}| + |\alpha_{23}| + |\alpha_{34}| - 1}_{dR}(X, \mathds{R})$ modulo the indeterminacy
\[
a_{12} \cdot H^{|\alpha_{23}| + |\alpha_{34}| - 1}_{dR}(X, \mathds{R}) + a_{34} \cdot H^{|\alpha_{12}| + |\alpha_{23}| - 1}_{dR}(X, \mathds{R}).
\]
Inductively extend this construction to define higher-order Massey products.
Finally, prove that on a formal manifold, all Massey products of any order vanish.
\end{exercise}

\begin{exercise}[{\cite{cordero-fernandez-gray, benson-gordon, hasegawa}}]
Prove that a nilmanifold satisfies the \(\partial\overline\partial\)-Lemma if and only if it is a complex torus.
\end{exercise}

\begin{exercise}[{\cite{voisin-FR, wu, angella-tomassini-InvMath}}]
Prove that the $\partial\overline\partial$-Lemma property is stable under small deformations of the
complex structure, in the sense of \cite{kodaira-book}.
\end{exercise}

\begin{proof}[Hint]
Use the fact that, thanks to Hodge theory, the dimensions of the cohomology groups vary upper semi-continuously under small deformations. Then exploit the numerical characterization of the $\partial\overline\partial$-Lemma property.
Note that the property is not necessarily closed under deformations, see for example \cite{angella-kasuya-NWEJM}.
\end{proof}

\ifdefined\frommain
\else
    \bibliographystyle{alpha}
    \bibliography{biblio}
\fi

\cleardoublepage

\section{Canonical Hermitian metrics and connections}\label{sec:metric}

In this section, we introduce the fundamental tools of Hermitian geometry and review several results concerning canonical Hermitian metrics and connections, largely inspired by the foundational work of Paul Gauduchon. The exposition partly follows the recent survey \cite{angella-LeMat}. In the final section, we focus on threefolds admitting locally conformally Kähler metrics and present some partial results toward their classification, following \cite{angella-parton-vuletescu, angella-petcu-vuletescu}.

\subsection{Hermitian geometry}\label{sec:hodge}

Thanks to the paracompactness property, we can take the (pullback of) the standard Hermitian inner product on each holomorphic chart and then glue them together using a partition of unity. This construction yields a {\em Hermitian metric} on $X$, defined as a family of Hermitian inner products $h_x$ on each complex vector space $(T_xX, J_x)$, varying smoothly with $x\in X$.
Via the linear isomorphisms $(T_xX, J_x) \simeq (T^{1,0}_xX, \sqrt{-1}) \simeq (T^{0,1}X, -\sqrt{-1})$, this induces a family of Hermitian inner products on each fibre of $TX\otimes_{\mathds R}\mathds C$. Decomposing this inner product into real and imaginary parts, say $g-\sqrt{-1}\omega$, we obtain: a Riemannian metric $g$ on $X$, for which $J$ acts as isometry; a $J$-invariant $2$-form $\omega$ on $X$, satisfying $\omega(V,JV)>0$ pointwise for any nowhere-zero vector field $V$. Since $\omega=g(J\_,\_)$, we will refer to a Hermitian metric either as $g$ or $\omega$ without ambiguity.

Once a Hermitian metric is fixed on a compact holomorphic manifold $X$, the associated volume form $\omega^n$ induces an inner product on the space of global differential forms. Indeed, the pointwise Hermitian inner product extends naturally to tensor spaces, and hence to the space of differential forms. For $\alpha, \beta \in \Omega^k(X)$, we define
$$
\langle \alpha, \beta \rangle := \int_X g_x(\alpha_x, \beta_x)\, \omega^n .
$$
This allows us to define the codifferential operator $d^* \colon \Omega^k(X) \to \Omega^{k-1}(X)$ as the formal adjoint of the exterior differential:
$$
\langle d^* \alpha, \beta \rangle = \langle \alpha, d\beta \rangle \quad \text{for all } \alpha \in \Omega^k(X),\, \beta \in \Omega^{k-1}(X).
$$
Analogously, we define the formal adjoints of the Dolbeault operators:
$$
\partial^* \colon \Omega^{p,q}(X) \to \Omega^{p-1,q}(X), \qquad \overline\partial^* \colon \Omega^{p,q}(X) \to \Omega^{p,q-1}(X).
$$
We can give an explicit expression for the adjoint operators by using the Hodge star operator. Define the $\mathds{C}$-linear map
$$ * \colon \Omega^{p,q}(X) \to \Omega^{n-q, n-p}(X) $$
by the condition that, for any $\alpha, \beta \in \Omega^{p,q}(X)$,
$$ \alpha \wedge *\bar{\beta} = g(\alpha, \beta) \, \omega^n. $$
Therefore, (noting that the real dimension of $X$ is even,) we obtain the identities:
$$
d^* = - * d *, \qquad
\partial^* = - * \overline{\partial} *, \qquad
\overline{\partial}^* = - * \partial *.
$$

Lastly, we define the de Rham Laplacian by
$$
\Delta := [d, d^*] = dd^* + d^*d,
$$
which, locally and up to a sign, coincides with the classical analytic Laplacian on functions.
Similarly, we define the Dolbeault Laplacians:
$$
\overline{\Box} := \overline{\partial} \, \overline{\partial}^* + \overline{\partial}^* \, \overline{\partial},
\qquad
\Box := \partial \, \partial^* + \partial^* \, \partial.
$$
All these operators are self-adjoint elliptic differential operators, see {\itshape e.g.} \cite[Section~IV.2]{wells}.
In particular, we can apply the Hodge theorem \cite{hodge} to obtain an orthogonal decomposition (analogously for the other operators):
$$
\Omega^{\bullet,\bullet}(X) = \ker \overline{\Box} \oplus \im \overline{\partial} \oplus \im \overline{\partial}^*,
$$
from which we deduce the isomorphism
$$
H^{\bullet,\bullet}_{\overline{\partial}}(X) \simeq \ker \overline\Box.
$$
We can also construct a Laplacian suitable for computing Bott–Chern cohomology. Starting from the non-elliptic operator $\overline{\partial}^* \overline{\partial} + \partial^* \partial + (\partial \overline{\partial})(\partial \overline{\partial})^*$, one can modify it to obtain an elliptic operator without altering its kernel:
$$
\tilde{\Delta}_{BC} := (\partial \overline{\partial})(\partial \overline{\partial})^* + (\partial \overline{\partial})^*(\partial \overline{\partial}) + (\overline{\partial}^* \partial)(\overline{\partial}^* \partial)^* + (\overline{\partial}^* \partial)^*(\overline{\partial}^* \partial) + \overline{\partial}^* \overline{\partial} + \partial^* \partial.
$$
This operator was first introduced in \cite{kodaira-spencer-3}, and later studied in detail by Schweitzer \cite{schweitzer}. Also in this case, one has a Hodge-type decomposition and an isomorphism
$$ H^{\bullet,\bullet}_{BC}(X) \simeq \ker \tilde{\Delta}_{BC}, $$
see \cite{schweitzer}. Analogous results hold for Aeppli cohomology, by considering
$$
\tilde{\Delta}_{A} := \partial\partial^*+\overline\partial\overline\partial^*+(\partial \overline{\partial})^*(\partial \overline{\partial}) + (\partial \overline{\partial})(\partial \overline{\partial})^* + (\overline{\partial} \partial^*)^*(\overline{\partial} \partial^*) + (\overline{\partial} \partial^*)(\overline{\partial} \partial^*)^*.
$$

There are some immediate applications of the harmonic theory. First, on a compact holomorphic manifolds, de Rham cohomology, Dolbeault cohomology, Bott-Chern and Aeppli cohomologies are finite-dimensional vector spaces.
Second, note that $*\Delta=\Delta*$, while $*\Box=\overline\Box*$ and $*\tilde\Delta_{BC}=\tilde\Delta_{A}*$, which realize the dualities
\begin{align*}
H^k_{dR}(X,\mathds C) &\simeq H^{2n-k}_{dR}(X,\mathds C) \quad \text{(Poincaré)} ,\\
H^{p,q}_{\overline\partial}(X) &\simeq H^{n-p,n-q}_{\overline\partial}(X) \quad \text{(Serre)} , \\
H^{p,q}_{BC}(X) &\simeq H^{n-p,n-q}_{A}(X) \quad \text{(Schweitzer)} . \\
\end{align*}

\subsection{Riemannian geometry in a nutshell}

To fix notation and motivate the next section, we begin by recalling the underlying Riemannian framework. For further details and a comprehensive treatment, we refer the reader to the classic reference \cite{besse}.

Let $g$ be a Riemannian metric on a differentiable manifold $M$. The {\em Levi-Civita connection} is the unique affine connection $D$ on $M$ that preserves $g$ ({\itshape i.e.}, $D g=0$) and is torsion-free ({\itshape i.e.}, $T=0$ where $T(V, W) := D_V W - D_W V - [V, W]$). This connection is fully determined by the Koszul formula: $2g(D_VW,Z) = Vg(W, Z) + W g(Z, V) - Zg(V, W) + g([V, W], Z) - g([W, Z], V) + g([Z, V], W)$.
It defines a {\em curvature} tensor $R(V,W)= D_{[V,W]}-[D_V, D_W]$, which can be interpreted as a $\mathrm{End}(TX)$-valued $2$-form.
We can also interpret the curvature in the following way. Consider $\alpha$ to be a differential $k$-form with values in $TM$. Define the exterior derivative $d^D$ associated with $D$ by $(d^D\alpha)(V_0,\ldots,V_k)=\sum_i (-1)^i D_{V_i}(\alpha(V_0, \ldots, \hat{V_i}, \ldots, V_n)) + \sum_{i\neq j} (-1)^{i+j} \alpha([V_i,V_j], V_0, \ldots, \hat{V_i}, \ldots, \hat{X_j}, \ldots, X_n)$, where ``$\hat{X_i}$'' means skipped. Then $R=-(d^D)^2$ measure the failure of $d^D$ to be a genuine differential.
The curvature tensor satisfies several additional symmetries. It is clear that $R(V,W)=-R(W,V)$, and we also have the identity $g(R(V, W )Z, Y) = -g(R(V, W) Y, Z)$. Moreover, $R\wedge \mathrm{id}=0$ (known as the first, or algebraic, Bianchi identity) and $d^D R = 0$ (the second, or differential, Bianchi identity), see \cite{gauduchon-book} for details.
The curvature tensor contains exactly the information about the {\em sectional curvatures} $K(V,W)= \sfrac{g(R(V,W)V,W)}{(g(V,V)g(W,W)-g(V,W)^2)}$ of any plane $\sigma_p=\mathrm{span}\{V,W\} \subseteq T_pX$ at $p$, which correspond to the Gaussian curvature of the surface spanned by geodesics whose tangent vector at $p$ is in $\sigma_p$.
Still, interesting information is encoded in the {\em Ricci curvature}, defined as $\mathrm{Ric}(V,W):=\mathrm{tr}(Z \mapsto R(V,Z)W)$.
Lastly, another important metric invariant is the function $s=\mathrm{tr}_gr$, called the {\em scalar curvature}.

In order to find {\em canonical metrics}, unique up to automorphisms and encoding the topological or differentiable properties of the manifold, several problems must be addressed.

\begin{itemize}
\item Complete Riemannian metrics with constant sectional curvature give rise to the notion of {\em space forms}. In each dimension $n$, there are only three simply-connected model space forms, up to homothety: the Euclidean space $\mathds{R}^n$ with its flat metric ($R = 0$); the unit sphere $S^n$ with the round metric of constant positive sectional curvature ($R = +1$); and the hyperbolic space $H^n$, which has constant negative sectional curvature ($R = -1$).
 
\item 
Metrics \( g \) satisfying 
\[
\mathrm{Ric}_g = \lambda \cdot g
\]
for some constant \(\lambda\) are called \emph{Einstein metrics}. This condition relates closely to the Einstein field equations \(\mathrm{Ric} - \tfrac{1}{2} s g = T\), where \(T\) is the energy-momentum tensor; in vacuum (\(T=0\)) solutions correspond to Ricci-flat metrics. Einstein metrics are critical points of the total scalar curvature functional \( S(g) = \int_M s_g \, \mu_g \) restricted to metrics with fixed volume. As noted in \cite[page 6]{besse}, finding explicit examples is challenging, and known constructions often rely on large symmetry groups or coupled equations.
A key existence result is the Aubin-Yau theorem \cite{aubin, yau}, proving the existence of K\"ahler-Einstein metrics on compact K\"ahler manifolds with non-positive first Chern class \(c_1(X) \leq 0\). Recent advances \cite{chen-donaldson-sun-1, chen-donaldson-sun-2, chen-donaldson-sun-3, tian} also address existence on Fano manifolds (\(c_1(X) > 0\)). 
Simply-connected manifolds with trivial canonical bundle, called {\em Calabi-Yau manifolds}, admit Ricci-flat K\"ahler metrics with holonomy in \(\mathrm{SU}(n)\). These manifolds play a central role in string theory compactifications due to their supersymmetry-preserving properties \cite{candelas-horowitz-strominger-witten}.
Einstein metrics are stationary points of the normalized Ricci flow
\[
\frac{\partial}{\partial t} g(t) = -2 \mathrm{Ric}_{g(t)} + \frac{2}{n} r_{g(t)} \cdot g(t),
\]
where \(n = \dim M\) and \(r_g\) is the average scalar curvature. Hamilton \cite{hamilton} proved long-time existence and convergence to Einstein metrics for compact three-dimensional Riemannian manifolds with positive Ricci curvature. The Ricci flow preserves the K\"ahler condition, yielding the K\"ahler-Ricci flow. Cao \cite{cao} used this flow to reprove the Aubin-Yau theorem by showing convergence to the unique K\"ahler-Einstein metric for compact K\"ahler manifolds with \(c_1(X) \leq 0\).

\item The study of {\em constant scalar curvature metrics} was initiated by Yamabe \cite{yamabe} and resolved by combined efforts of Trudinger \cite{trudinger}, Aubin \cite{aubin-2}, and Schoen \cite{schoen}, who used the positive mass theorem from General Relativity \cite{schoen-yau}. They proved that any Riemannian metric \(g\) on a compact \(n\)-dimensional manifold can be conformally rescaled to \(u^{\frac{4}{n-2}} g\) with constant scalar curvature, reducing the problem to solving the PDE
\[
\frac{4(n-1)}{n-2} \Delta_g u - s_g u + c u^{\frac{n+2}{n-2}} = 0,
\]
where \(\Delta_g\) is the Laplace–Beltrami operator and \(c\) a constant.
In the K\"ahler setting, scalar curvature admits a symplectic interpretation as a moment map for an infinite-dimensional Hamiltonian action \cite{donaldson, fujiki}. The existence of constant scalar curvature K\"ahler (cscK) metrics in a fixed K\"ahler class was recently established by Chen and Cheng \cite{chen-cheng-1, chen-cheng-2}. These are special cases of {\em extremal metrics} \cite{calabi}, which minimize the Calabi functional. For a comprehensive introduction, see \cite{gauduchon-book, szekelyhidi, guedj-zeriahi}.
\end{itemize}

\subsection{Hermitian connections and curvatures}

In the Hermitian setting, a key issue is that the Levi-Civita connection generally does not preserve the Hermitian structure unless the metric is K\"ahler. Let \(X\) be a holomorphic manifold of dimension \(n\), endowed with a Hermitian metric \(g\) and associated \((1,1)\)-form \(\omega = g(J\_, \_)\), where \(J\) is the complex structure. To respect both \(g\) and \(J\), one must consider linear connections \(\nabla\) satisfying \(\nabla g = 0\) and \(\nabla J = 0\), which in general come with torsion \(T \ne 0\). The space of such Hermitian connections forms an affine space modeled on \(\Omega^{1,1}(X, TX)\).

In his celebrated paper \cite{gauduchon-BUMI}, Gauduchon introduced a one-parameter family of Hermitian connections \(\{\nabla^t\}_{t\in\mathds{R}}\), now known as the \emph{Gauduchon connections}, defined by
\[
g(\nabla^t_V W, Z) = g(D_V W, Z) - \frac{1 - t}{4} d\omega(JV, JW, JZ) + \frac{1 + t}{4} d\omega(JV, W, Z).
\]
All Gauduchon connections coincide with the Levi-Civita connection when the metric is Kähler. Some notable cases include:
\begin{description}
  \item[\(t = 1\):] the \emph{Chern connection}, the unique Hermitian connection whose \((0,1)\)-part equals the Cauchy-Riemann operator \(\overline\partial_{TX}\) on the holomorphic tangent bundle;
  \item[\(t = -1\):] the \emph{Bismut connection} \cite{bismut}, the unique Hermitian connection with totally skew-symmetric torsion;
  \item[\(t = 0\):] the orthogonal projection of the Levi-Civita connection onto the affine space of Hermitian connections, also known as the \emph{first canonical connection};
  \item[\(t = \sfrac{1}{2}\):] the \emph{conformal connection}, in the sense of Libermann;
  \item[\(t = \sfrac{1}{3}\):] the Hermitian connection with minimal torsion.
\end{description}

Associated to any such Hermitian connection \(\nabla\), there is a \emph{curvature operator}
\[
R \in \Omega^2(X, \mathrm{End}(TX)), \qquad R(V,W) := [\nabla_V, \nabla_W] - \nabla_{[V,W]}.
\]
By means of the metric, one can also view it as a $4$-covariant tensor:
\[
R(V, W, Z, Y) := g(R(V, W)Z, Y) \in \Omega^2(X) \otimes \Omega^2(X).
\]

In the Kähler case, all Gauduchon connections coincide with the Levi-Civita connection, which also equals the Chern connection. Consequently, the curvature tensor satisfies additional symmetries: \(R \in S^2\Omega^2(X)\) and \(R \in \Omega^{1,1}(X, \mathrm{End}(T^{1,0}X))\). The extent to which these symmetries persist in the non-Kähler setting has been investigated by Gray \cite{gray}, Yang and Zheng \cite{yang-zheng-2}, and more recently in \cite{angella-otal-ugarte-villacampa}.

The failure of these symmetries in the general case leads to several notions of \emph{Ricci curvature}, obtained via distinct traces:
\begin{align*}
\mathrm{Ric}^{(1)}(V, W) &:= \mathrm{tr}\, R(V, W) \in \Omega^2(X), \\
\mathrm{Ric}^{(2)}(Z, Y) &= \mathrm{tr}_g R(\cdot, \cdot, Z, Y) \in \mathrm{End}(TX).
\end{align*}
Both yield the same \emph{scalar curvature}:
\[
s := \mathrm{tr}_g \mathrm{Ric}^{(1)} = \mathrm{tr}\, \mathrm{Ric}^{(2)} \in \mathcal{C}^\infty(X, \mathds R).
\]
A third trace can also be considered, though no compelling geometric interpretation has been identified.

In the case of the Chern connection, the first Ricci form \(\mathrm{Ric}^{(1)}\) is a closed real \((1,1)\)-form representing the first Bott–Chern class \(c_1^{BC}(X) \in H^{1,1}_{BC}(X, \mathds{R})\), which maps to the usual first Chern class of the anti-canonical bundle \(K_X^{-1}\).

\subsection{Special Hermitian metrics}

We continue with the same notation. Consider \( D\omega \), the covariant derivative of the \((1,1)\)-form \(\omega\) with respect to the Levi-Civita connection. This can be viewed as a tensor in \( \Omega^2(X, TX) \) via the identity $g((D\omega)(V,W),Z) = (D_V \omega)(W,Z)$.
For a detailed decomposition in terms of complex bi-degrees, see \cite[Proposition 1]{gauduchon-BUMI}.

Under the action of the unitary group \( U(n) \), the tensor \( D\omega \) decomposes into four irreducible components, one of which vanishes in the integrable (complex) case. The vanishing of specific subsets of these components gives rise to the celebrated Gray–Hervella classification \cite{gray-hervella} of almost-Hermitian structures into \( 2^4 = 16 \) classes. We now describe some of the Hermitian metric types most relevant to our discussion.

\begin{description}
\item[Kähler] A Hermitian metric is called \emph{Kähler} if \( D\omega = 0 \), which is equivalent to \( d\omega = 0 \). In this case, \(\omega\) defines a symplectic structure, and the complex structure is parallel, i.e., \( DJ = 0 \). Kähler metrics exhibit strong geometric, cohomological, and topological properties.

\item[balanced] Hermitian metrics satisfying \( d\omega^{n-1} = 0 \), where \( n \) is the complex dimension, or equivalently \( d^*\omega = 0 \), are called \emph{balanced metrics} \cite{michelsohn}. They are notable for being stable under modifications \cite{alessandrini-bassanelli}.

\item[SKT] Hermitian metrics for which the torsion $3$-form of the Bismut connection, given by \( d^c\omega \), is closed, {\itshape i.e.}, \( dd^c\omega = 0 \), are called \emph{strong K\"ahler with torsion}, or equivalently, \emph{pluriclosed} metrics.

\item[Gauduchon]  A Hermitian metric \(\omega\) is called standard or \emph{Gauduchon} \cite{gauduchon-cras} if it satisfies \( \partial \overline{\partial} \omega^{n-1} = 0 \). In complex dimension \(2\), this condition coincides with the pluriclosed condition. The significance of Gauduchon metrics lies in the fact that, on any compact complex manifold, every conformal class of Hermitian metrics contains a unique Gauduchon metric of volume one, see \cite[Th\'eor\`eme 1]{gauduchon-cras} (cf. Section \ref{subsec:gauduchon}).

\item[LCK] A Hermitian metric is called \emph{locally conformally K\"ahler} if it is locally conformal to a K\"ahler metric. By the Poincaré lemma, the conformal factors can be encoded in a global closed $1$-form \(\theta\), called the Lee form, satisfying
\[
d\omega = \theta \wedge \omega, \quad \text{with } d\theta = 0.
\]
LCK geometry can be understood as an equivariant (homothetic) version of Kähler geometry \cite{gini-ornea-parton-piccinni}. If \(\omega\) is an LCK metric on \(X\) with Lee form \(\theta\), then there exists a covering \(\tilde{X}\) endowed with a Kähler metric \(\tilde{\omega}\), such that the pullback of \(\theta\) is exact, \(\tilde{\omega}\) is conformal to the pullback of \(\omega\), and the deck transformation group acts on \(\tilde{\omega}\) by homotheties.
Note that most compact complex surfaces, with the only exception of certain Inoue surfaces, admit either K\"ahler or LCK metrics, see \cite{belgun}.
For a recent survey on LCK geometry, see \cite{ornea-verbitsky}.
\end{description}

We remark that, for compact holomorphic surfaces, the existence of a Kähler metric is a topological property: it depends solely on whether the first Betti number is even. This follows from the classification results of Enriques, Kodaira, and Siu, and was later independently and intrinsically proven by Lamari \cite{lamari} and Buchdahl \cite{buchdahl}.

\subsection{Existence of Gauduchon metrics}\label{subsec:gauduchon}

In general, special Hermitian metrics do not exist on compact holomorphic manifolds, except for Gauduchon metrics, as proven in \cite[Th\'eor\`eme 1]{gauduchon-cras}. Let $X$ be an $n$-dimensional compact holomorphic manifold and $\omega$ an Hermitian metric on $X$. If $\theta$ denotes the Lee form of the metric, namely the unique $1$-form defined by $d\omega^{n-1}=\theta\wedge\omega^{n-1}$, then $\omega$ is Gauduchon if and only if $d^*\theta=0$.
We are going to recall the proof of the following

\begin{theorem}[{\cite{gauduchon-cras}}]\label{thm:gauduchon}
Let $(X,J)$ be a compact holomorphic manifold of dimension $n>1$. Then, there exists and it is unique, up to multiplication with positive constant, a Gauduchon metric in the conformal class of any Hermitian metric.
\end{theorem}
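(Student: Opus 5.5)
Fix a Hermitian metric $\omega_0$ and look for the Gauduchon metric in its conformal class as $\omega = f\,\omega_0$ with $f>0$ smooth. Writing $f^{n-1}=u$, we have $\omega^{n-1}=u\,\omega_0^{n-1}$. The Gauduchon condition $\partial\overline\partial\omega^{n-1}=0$ then becomes a linear equation for $u$:
\[
P(u):=\sqrt{-1}\,\partial\overline\partial\bigl(u\,\omega_0^{n-1}\bigr)=0,
\]
where the left-hand side is a top form, which we identify with a function via division by $\omega_0^n$. Expanding, $P(u)=\Delta_{\omega_0} u + (\text{first-order terms in }u) + c\,u$. Here the leading part is (up to a positive constant) the complex Laplacian $\operatorname{tr}_{\omega_0}\sqrt{-1}\partial\overline\partial u$, the first-order terms involve $\partial\omega_0^{n-1}$ and $\overline\partial\omega_0^{n-1}$, and $c$ is a smooth function coming from $\partial\overline\partial\omega_0^{n-1}$. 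So $P$ is a second-order elliptic operator with no assumption on the sign of $c$. The theorem is therefore equivalent to showing that $\ker P$ is one-dimensional and spanned by a positive function. Uniqueness up to constants is exactly that $\ker P$ is one-dimensional. Since $n>1$, $f=u^{1/(n-1)}$ is well defined, and this is where the hypothesis is used.

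I would study the formal adjoint $P^*$ with respect to the $L^2$ pairing given by $\omega_0^n$. By integration by parts (Stokes on the compact $X$), $\int_X v\,\sqrt{-1}\partial\overline\partial(u\,\omega_0^{n-1}) = \int_X u\,\sqrt{-1}\partial\overline\partial v\wedge\omega_0^{n-1}$. Hence
\[
P^*(v)=\frac{\sqrt{-1}\,\partial\overline\partial v\wedge \omega_0^{n-1}}{\omega_0^n}=\tfrac{1}{n}\,\Delta^{c}_{\omega_0} v,
\]
which is an elliptic operator with no zeroth-order term. By the strong maximum principle (E. Hopf) on the compact connected $X$, $\ker P^*$ consists exactly of the constants. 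Since $P$ is elliptic on a compact manifold, it is Fredholm of index $0$ (it has the same principal symbol as the Laplacian), so $\dim\ker P=\dim\ker P^*=1$. Moreover, $\operatorname{im}P=(\ker P^*)^\perp=\{w:\int_X w\,\omega_0^n=0\}$.

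It remains to show that a generator $u$ of $\ker P$ can be taken strictly positive. I expect this to be the main obstacle, because $P$ has a zeroth-order term of unknown sign, so the maximum principle cannot be applied to it directly. My plan is the following. First show that $u$ does not change sign. Suppose instead that $u$ takes both signs. Because $\operatorname{im}P=\{\int_X w\,\omega_0^n=0\}$, the equation $P(w)=u^- - \lambda$, with $\lambda$ chosen so that the right-hand side has zero mean, is solvable. Pairing with positive test functions and using $P^*(1)=0$, I would aim for a contradiction. Failing that, I would use a cleaner route: replace $P$ by $P-\mu$ for $\mu$ large, so that a maximum principle holds and the resolvent $(\mu-P)^{-1}$ is a positive operator, hence positivity-improving by the strong maximum principle. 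The Krein--Rutman theorem applied to the compact positive operator $(\mu-P)^{-1}$ then gives a principal eigenfunction $u>0$, with simple principal eigenvalue. Next I must check that this principal eigenvalue of $P$ is $0$. The principal eigenvalue of $P^*$ is the same, and $P^*$ has the positive eigenfunction $1$ with eigenvalue $0$; since only the principal eigenfunction of $P^*$ can be positive, the common principal eigenvalue is $0$. So $u>0$ spans $\ker P$. Then elliptic regularity makes $u$ smooth, $\omega=u^{1/(n-1)}\omega_0$ is the Gauduchon metric, and uniqueness up to a positive constant follows from $\dim\ker P=1$.
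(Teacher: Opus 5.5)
Your argument is correct. Your setup is the paper's: your $P$ is, up to a positive factor, the paper's $(\Delta^{Ch})^*$, and your $P^*$ is the Chern Laplacian. So $\dim\ker P=1$ follows from Hopf's maximum principle and index zero, exactly as in the paper. Your substitution $u=f^{n-1}$, which makes the Gauduchon condition visibly linear and produces $P^*$ by Stokes, is a cleaner reduction than the paper's detour through the Lee form and the conformal transformation rules for $\Delta^{Ch}$ and its adjoint.

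The positivity step is where you genuinely diverge. The paper stays elementary:
\begin{enumerate}
\item A nonzero $f\in\ker(\Delta^{Ch})^*$ has $\int_X f\,\omega^n\neq 0$; otherwise $1\in\operatorname{im}\Delta^{Ch}$, which the maximum principle forbids.
\item A sign change is ruled out by choosing $\varphi>0$ with $\int_X\varphi f\,\omega^n=0$ and applying the previous fact to the conformal metric $\varphi\omega$.
\item The strong maximum principle for nonnegative solutions, valid whatever the sign of the zeroth-order term, upgrades $f\geq 0$ to $f>0$.
\end{enumerate}
You instead apply Krein--Rutman to the compact, positivity-improving resolvent $(\mu-P)^{-1}$. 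This is heavier machinery: you need a Banach space with a solid cone, compactness, and the strong form of the theorem. In return it exhibits the Gauduchon density as the Perron--Frobenius eigenfunction of $P$.

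Two simplifications are available. First, the principal eigenvalue vanishes directly, since $\lambda_1\int_X u\,\omega_0^n=\int_X\sqrt{-1}\partial\overline\partial(u\,\omega_0^{n-1})=0$ by Stokes and $\int_X u\,\omega_0^n>0$. There is no need to compare with the spectrum of $P^*$.

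Second, your abandoned first attempt dualizes in the wrong direction, but it closes in one line via $\operatorname{im}P^*=(\ker P)^\perp$. If $u$ changed sign, some $h>0$ would satisfy $\int_X hu\,\omega_0^n=0$. Then $h=P^*v$ for some $v$, which is impossible at a maximum of $v$. This is exactly what the paper's two lemmas do: the conformal change there just implements multiplication by $\varphi$, since $\Delta^{Ch}_{\varphi\omega}=\varphi^{-1}\Delta^{Ch}_\omega$. Together with the strong maximum principle, this makes Krein--Rutman unnecessary.
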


\begin{proof}
Fix an arbitrary Hermitian metric $\omega$ on $X$. We wish to prove that there exists and it is unique (up to constant) a smooth function $\psi:X\to \mathds{R}^+$ such that $\psi\omega$ is Gauduchon, {\itshape i.e.} $\partial\overline{\partial}(\psi^{n-1}\omega^{n-1})=0$. First of all, consider the \emph{Chern Laplacian} acting on smooth functions
$$\Delta^{\text{Ch}}f:=\Delta_{\omega}f+g(df,\theta_{\omega})=2\,\text{tr}_{\omega}(\sqrt{-1}\partial\overline{\partial}f)
$$
and its formal adjoint 
$$
(\Delta^{\text{Ch}})^*f=\Delta_{\omega}f-g(df,\theta_{\omega})+
d^{*_{\omega}}\theta_{\omega}f\,.
$$
In particular, notice that $d^{*_{\omega}}\theta_{\omega}=(\Delta^{\text{Ch}})^*(1)$. Consider now the conformal change $\tilde\omega=\Phi^{\frac{2}{n-1}}\omega$ and denote by $\Delta^{\text{Ch}}_\Phi$ the associated Chern Laplacian. Then
$$
\Delta^{\text{Ch}}_\Phi=\Phi^{-\frac{2}{n-1}}\Delta^{\text{Ch}}
\quad\text{and}\quad
(\Delta^{\text{Ch}}_{\Phi})^{\tilde *}=
\Phi^{-\frac{2n}{n-1}} (\Delta^{\text{Ch}})^*\circ \Phi^2\,.
$$
Therefore, $\tilde\omega$ is Gauduchon if and only if $d^{*_{\tilde\omega}}\tilde\theta_{\tilde\omega}=0$ if and only if $(\Delta^{\text{Ch}}_{\Phi})^{\tilde *}(1)=0$ if and only if $(\Delta^{\text{Ch}})^*(\Phi^2)=0$ if and only if $\Phi^2\in\ker\,(\Delta^{\text{Ch}})^*$. One needs to show then the existence and uniqueness of a positive function in the kernel of the linear elliptic operator $(\Delta^{\text{Ch}})^*$. Indeed, if $f\in\ker\,(\Delta^{\text{Ch}})^*$ is positive, then by the previous argument $\tilde\omega:=f^{\frac{1}{n-1}}\omega$ is a Gauduchon metric.

Notice that the Chern Laplacian $\Delta^{\text{Ch}}$ is a real, elliptic operator of second order with smooth coefficients and without zero order terms, {\itshape i.e.} $\Delta^{\text{Ch}}(1)=0$. Clearly, $\mathds{C}\subseteq \ker(\Delta^{\text{Ch}})$. Since $X$
is compact and connected, if $f\in \ker(\Delta^{\text{Ch}})$, by Hopf maximum principle, $f$ is constant. Hence, $\ker(\Delta^{\text{Ch}})=\mathds{C}$. Notice that $(\Delta^{\text{Ch}})^*$ has zero order terms, however $(\Delta^{\text{Ch}})^*$ is elliptic and both the index of $\Delta^{\text{Ch}}$ and the index of $(\Delta^{\text{Ch}})^*$ are equal to the index of the Laplacian $\Delta_\omega$ hence equal to $0$. Therefore, $\dim\ker(\Delta^{\text{Ch}})^*=\dim\ker(\Delta^{\text{Ch}})=1$. We need now the following
\begin{lemma}\label{lem:0}
  Let $f:X\to\mathds{R}$ be a smooth function. If $f\in\ker(\Delta^{\text{Ch}})^*$ and $f \not\equiv 0$, then $(f,1)_\omega=\int_X f\,dV_{\omega}\neq 0$.   
\end{lemma}
 \begin{proof}[Proof of \Cref{lem:0}]
     By contradiction, assume that $(f,1)_\omega=\int_X f\,dV_{\omega}= 0$. Since
     $\dim\ker(\Delta^{\text{Ch}})^*=1$, by hypothesis $f$ is a generator of $\ker(\Delta^{\text{Ch}})^*$. Therefore, the constant function $1$ is orthogonal to $\ker(\Delta^{\text{Ch}})^*$. By the orthogonal decomposition $$
     \mathcal{C}^\infty(X,\mathds{C})=
     \ker(\Delta^{\text{Ch}})^*\oplus\text{Im}(\Delta^{\text{Ch}})
     $$ one has that $1\in\text{Im}(\Delta^{\text{Ch}})$, namely there exists a function $h:X\to\mathds{C}$ such that $\Delta^{\text{Ch}}(h)=1>0$. By the maximum principle, it follows that $h$ is constant and so $\Delta^{\text{Ch}}(h)=0\neq 1$ giving a contradiction.
 \end{proof}

\begin{lemma}\label{lem:1}
 Let $f:X\to\mathds{R}$ be a smooth function. If $f$ is a generator of    $\ker(\Delta^{\text{Ch}})^*$ then $f\geq 0$ or $f\leq 0$ on $X$.
\end{lemma}
 \begin{proof}[Proof of \Cref{lem:1}]
Since $f$ is a generator of    $\ker(\Delta^{\text{Ch}})^*$ then $f\not\equiv 0$ and so by the previous result $(f,1)_\omega=\int_X f\,dV_{\omega}\neq 0$. Suppose that $(f,1)_\omega>0$. We will show that $f\geq 0$ on $X$. By contradiction assume that there exists a point $x_0\in X$ such that $f(x_0)<0$. By continuity of $f$, there exists an open neighborhood $U$ of $x_0$ such that $f<0$ on $U$. Consider the open set, containing $U$, given by
$$
V:=\left\lbrace x\in X \,:\,f(x)<0\right\rbrace\,.
$$
One can then construct a smooth function $\varphi:X\to\mathds{R}^+$ such that
$$
\int_V\varphi f\,\text{dV}_\omega+
\int_{X\setminus V}\varphi f\,\text{dV}_\omega=
\int_X\varphi f\,\text{dV}_\omega=0
$$
namely $(\varphi f,1)_\omega=0$. Therefore, with respect to the conformal metric $\varphi\omega$, $(\varphi^{1-n} f,1)_{\varphi\omega}=0$. Moreover, since $f\in\ker(\Delta^{\text{Ch}})^*$, then $\varphi^{1-n}f\in\ker(\Delta^{\text{Ch}})_{\varphi\omega}^*$ and $\varphi^{1-n}f\not\equiv 0$ but this contradicts Lemma \ref{lem:0}.
\end{proof}

Finally we recall the following
\begin{lemma}\label{lem:2}
Let $X$ be a connected smooth manifold and let $Q:\mathcal{C}^\infty(X,\mathds{R})\to \mathcal{C}^\infty(X,\mathds{R})$ be an elliptic differential operator of second order with real smooth coefficients. If $f\in \mathcal{C}^\infty(X,\mathds{R})$ is such that $Q(f)=0$ and $f\geq 0$ on $X$, then $f>0$ or $f\equiv 0$ on $X$.
\end{lemma}
We refer to {\itshape e.g.} \cite{popovici-book} for the proof.
As a consequence of the previous results one can prove the following
\begin{corollary}\label{cor:3}
    Let $f:X\to\mathds{R}$ be a smooth function such that $f\in\ker(\Delta^{\text{Ch}})^*$. Then $f> 0$ or $f< 0$ or $f\equiv 0$ on $X$.
\end{corollary}
\begin{proof}
If $f\not\equiv 0$ then $f$ generates $\ker(\Delta^{\text{Ch}})^*$ because it is $1$-dimensional. Then by Lemma \ref{lem:1}, $f\geq 0$ or $f\leq 0$ on $X$. Suppose that $f\geq 0$, then applying Lemma \ref{lem:2} with $Q=(\Delta^{\text{Ch}})^*$ one has $f>0$.
\end{proof}
We can finally conclude the proof of Theorem \ref{thm:gauduchon}. One needs to find a unique (up to constant) smooth positive function $f$ on $X$ such that $f\in\ker(\Delta^{\text{Ch}})^*$. Since $\ker(\Delta^{\text{Ch}})^*$ is $1$-dimensional the uniqueness up to constants follows immediately.
Now let $f\in \ker(\Delta^{\text{Ch}})^*$ be a generator, hence $f\not\equiv 0$ and so by Corollary \ref{cor:3}, 
$f> 0$ or $f< 0$ on $X$. Assuming $f> 0$ we are done indeed $\tilde\omega:=f^{\frac{1}{n-1}}\omega$ is a Gauduchon metric.
\end{proof}

\subsection{Classification of LCK threefolds}

For a compact holomorphic manifold \( X \) of complex dimension \( n \), the algebra of meromorphic functions \( \mathcal{M}_X \) is finitely generated, and its transcendence degree takes values in \( \{0, \dots, n\} \). This invariant, first established by Siegel, is known as the \emph{algebraic dimension}:
\[
a(X) := \mathrm{tr}\deg_{\mathds{C}} H^0(X, \mathcal{M}_X).
\]
Its geometric meaning becomes clear after the Algebraic Reduction Theorem, see {\itshape e.g.} \cite{ueno-LNM}. If a compact complex manifold \( X \) has algebraic dimension \( a(X) = k \), then there exists a (unique up to bimeromorphic equivalence) diagram
\[
\xymatrix{
X \ar@{<-->}[r] & X^* \ar[d]^f \\
& V
}
\]
where \( X^* \) is bimeromorphically equivalent to \( X \) of the same dimension, \( V \) is a projective manifold of dimension \( k \), and \( f \) is a holomorphic surjective map inducing an isomorphism between the fields of meromorphic functions of \( X^* \) and \( V \).
We recall that the \emph{Weak Factorization Theorem}, which also holds in the analytic category \cite{abramovich-karu-matsuki-wlodarczyk}, states that any bimeromorphic map can be factored as a finite sequence of blow-ups and blow-downs along smooth centers. It is conjectured that a \emph{Strong Factorization} also holds \cite{hironaka-AnnMath-1, hironaka-AnnMath-2, abramovich-karu-matsuki-wlodarczyk}, that is, any bimeromorphism can be expressed as
\[
\xymatrix{
& \hat X \ar[ld]_{\sigma} \ar[rd]^c & \\
X \ar@{<-->}[rr] && X^*
}
\]
where \(\sigma\) and \(c\) are compositions of blow-ups with smooth centers.

Let us survey the classification of compact holomorphic surfaces \( X \) based on their algebraic dimension.
\begin{description}
\item[$a(X)=2$:] By a theorem of Chow and Kodaira \cite{chow-kodaira}, a compact holomorphic surface has two algebraically independent meromorphic functions, {\itshape i.e.}, algebraic dimension \( a(X) = 2 \), if and only if it is algebraic.

\item[$a(X) = 1$:] If \( X \) is non-algebraic and \( a(X) = 1 \), then \( X \) is an \emph{elliptic surface}: there exists a holomorphic surjective map \( f\colon X \to C \) onto a non-singular complex curve \( C \), whose general fibre is a smooth elliptic curve. The classification of the possible singular fibres is due to Kodaira.
If moreover \( X \) is non-Kähler (note that it still admits a LCK structure), that is, \( b_1(X) \) is odd, then all smooth fibres are isomorphic, and the only singular fibres are either multiple fibres of the generic one or, if the surface is not minimal, trees of rational curves. In this case, the surface is called a \emph{quasi-bundle}; see \cite{brinzanescu, brinzanescu-book}.

\item[$a(X) = 0$:] If \( X \) has algebraic dimension zero, then it admits no non-constant meromorphic functions. In this case, \( X \) is necessarily non-algebraic. Surfaces of algebraic dimension zero include certain tori, certain K3 surfaces, and certain class VII surfaces.
\end{description}

In \cite{angella-parton-vuletescu}, we addressed the classification problem for threefolds under the assumption that \( X \) admits an LCK metric.
The case of maximal algebraic dimension, \( a(X) = 3 \), is well understood thanks to \cite{moishezon, deligne-griffiths-morgan-sullivan, kokarev-kotschick}: indeed, in this situation \( X \) is Mo\v{\i}\v{s}hezon, hence it satisfies the \(\partial \overline{\partial}\)-Lemma property, and is therefore Kähler. We then turn to the case in which the algebraic dimension is one less than maximal, namely \( a(X) = n - 1 = 2 \). The remaining case \( a(X) = 1 \) will be considered in \cite{angella-petcu-vuletescu}.

Under the Algebraic Reduction theorem and the Strong Factorization Conjecture, we consider the following diagram:
\[
\xymatrix{
& \hat{X} \ar[ld]_\sigma \ar[rd]^c & \\
X \ar@{<-->}[rr] && X^* \ar[d]^f \\
&& B
}
\]
where \( B \) is a projective surface, and \( f: X^* \to B \) is a surjective, proper, holomorphic map with connected fibres.  
It is known that the general fibre of \( f \) is a smooth elliptic curve \cite{kawai}.  
We further assume that \( f \) is flat (to avoid unexpected behavior in fibre degeneration) and that its singular locus \( S(f) \) is a simple normal crossing divisor.

One of the key tools in the argument is the following result, stated in the more general context of a \emph{weak LCK} (WLCK) structure. This means that $\omega \geq 0$ is assumed to be positive only outside a proper analytic subset \( N_\omega \), which may be non-empty. Such structures are necessary since, in general, metric properties do not behave well under blowups and blowdowns. On the other hand, if \(X\) is WLCK, then so is \(\hat{X}\), see \cite[Proposition 2.2]{angella-parton-vuletescu}.

\begin{theorem}[Lemma on fibrations, {\cite[Lemma 2.4]{angella-parton-vuletescu}}]
Let \( X \) and \( B \) be holomorphic manifolds with \(\dim X > \dim B\), and let \( f \colon X \to B \) be a surjective, proper holomorphic map with connected fibres. Suppose \(\omega\) is a WLCK structure on \(X\) with Lee form \(\theta\). If the cohomology class of the Lee form satisfies \( [\theta] = f^*[\alpha], \) for some class \([\alpha] \in H^1(B, \mathds{R})\), then \([\theta]=0\).
\end{theorem}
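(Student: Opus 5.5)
We would argue by contradiction, exploiting the fact that a nonzero Lee class pulled back from the base allows us to modify $\omega$ within its conformal class so that the Lee form itself is a pullback. Concretely, since $[\theta]=f^*[\alpha]$, we choose a closed $1$-form $\alpha$ on $B$ representing $[\alpha]$ and write $\theta=f^*\alpha+dh$ for some smooth real function $h$ on $X$. The conformally rescaled form $\omega':=e^{-h}\omega$ is again WLCK, with degeneracy locus $N_{\omega'}=N_\omega$, and its Lee form is $\theta'=\theta-dh=f^*\alpha$. Hence we may assume from the start that $\theta=f^*\alpha$ and $d\omega=f^*\alpha\wedge\omega$.

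Next we localize over the base. On a simply connected open set $U\subseteq B$, say a small ball, write $\alpha=d\phi$ with $\phi\in\mathcal C^\infty(U,\mathds R)$. Then $\tilde\omega:=e^{-f^*\phi}\omega$ is a closed semipositive $(1,1)$-form on $f^{-1}(U)$, positive away from $N_\omega$. The key step is to integrate over the fibres. Let $m:=\dim X-\dim B>0$. The fibre integral $\int_{X_b}\omega^m$ over a general fibre $X_b$ gives a function $v(b)$ on $B$, which is positive, since $\omega>0$ on the dense set $X_b\setminus N_\omega$ for general $b$. On $U$, the closed form $\tilde\omega^m$ has fibre integral $e^{-m\phi(b)}v(b)$. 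Because $\tilde\omega^m$ is closed and $f$ is proper, the pushforward current $f_*\tilde\omega^m$ is a $d$-closed current of degree $0$, that is, a constant on the connected set $U$. Equivalently, the homology class of the fibres is constant and fibres over $U$ are homologous in $f^{-1}(U)$, so Stokes' theorem applies. We conclude that $v=c_U e^{m\phi}$ on $U$, with $c_U>0$.

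Globally, this gives $d\log v=m\,\alpha$ on the open dense set of regular values. The function $v$ is defined on all of $B$ via the pushforward $f_*\omega^m$ and is continuous. It is also positive, because $f_*\omega^m$ is a degree-zero current satisfying the local identity above everywhere. Therefore $\alpha=\tfrac1m d\log v$ is exact on $B$. Then $[\alpha]=0$, and hence $[\theta]=0$, as claimed.

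The main obstacle is the fibre-integration step near critical values of $f$ and where fibres meet $N_\omega$. There one must justify that $f_*\omega^m$ is a genuine smooth, or at least continuous and positive, function. We would handle this with currents: $f_*(e^{-f^*\phi}\omega^m)=e^{-m\phi}f_*\omega^m$ holds by the projection formula. The current $f_*\omega^m$ is closed up to $\alpha$, in the sense that $d(f_*\omega^m)=m\,\alpha\, f_*\omega^m$. A degree-zero current $T$ solving $dT=m\alpha T$ is locally $c\,e^{m\phi}$, so it is smooth. Positivity follows from the generic fibre, where the current equals the positive fibre volume, and this forces $c>0$ everywhere by connectedness of $B$.
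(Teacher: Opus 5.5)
Your proof is correct. It shares two steps with the paper: the conformal reduction to $\theta=f^*\alpha$, and the projection-formula identity $d(f_*\omega^m)=m\,\alpha\wedge f_*\omega^m$. Where you differ is in how you pass from the good locus to all of $B$.

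The paper uses fibre integration only on the open set $B'$ where $f$ is a submersion and the fibre volume $v$ is positive. There it deduces $[\alpha]|_{B'}=0$. It then invokes a topological fact: $H^1(B,\mathds{R})\to H^1(B',\mathds{R})$ is injective, because $B\setminus B'$ is a proper analytic subset and hence has real codimension at least $2$. This is proved via the long exact sequence of the pair, excision and the Thom isomorphism.

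You instead read the same identity as a first-order distributional equation for the degree-zero current $T=f_*\omega^m$ on all of $B$. Locally $d(e^{-m\phi}T)=0$, so $T=c_U e^{m\phi}$ is smooth and its sign is locally constant. Positivity at one generic point, together with connectedness of $B$ (inherited from $X$), then gives $T>0$ everywhere, so $\alpha=\frac1m\, d\log T$ is globally exact.

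What each route buys:
\begin{itemize}
\item Your route needs neither the analyticity of the bad locus nor the cohomological extension lemma. It also shows that the fibre-volume function extends to a smooth positive function across the critical values; in particular every smooth fibre has positive $\omega$-volume.
\item The paper's route stays within classical fibre integration over a proper submersion and needs no theory of currents. Its cost is the codimension argument.
\end{itemize}

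A minor point: you announce an argument by contradiction but then argue directly, which is fine.
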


\begin{proof}
Up to a global conformal change, we may assume \(\theta = f^* \alpha\). Denote by \(S_f\) the union of the non-flat and singular loci, that is, the set of points \(b \in B\) for which the fibre \(f^{-1}(b)\) is either singular or has dimension strictly greater than \(k := \dim X - \dim B > 0\). It is known that \(S_f\) is an analytic subspace. Moreover, the restriction $f \colon X \setminus f^{-1}(S_f) \to B \setminus S_f$ is a submersion. 
We also consider the open subset \(B' \subset B \setminus S_f\) where, for any \(b \in B'\),
\[
v(b) := (f_* \omega^k)(b) = \int_{f^{-1}(b)} \omega^k\vert_{f^{-1}(b)} = \mathrm{vol}_\omega(f^{-1}(b)) > 0.
\]
By the ``weak'' assumption on the structure, the complement \( B \setminus B' \) is a proper analytic subset of \( B \).

By the WLCK condition \( d\omega = f^*\alpha \wedge \omega \) and using the projection formula, we compute
\[
d(f_* \omega^k) = f_*(d\omega^k) = k f_*(f^*\alpha \wedge \omega^k) = k \alpha \wedge f_* \omega^k,
\]
which we rewrite as
\[
d v = k \alpha \wedge v.
\]
This implies that on \( B' \),
\[
\alpha_{\vert B'} = \frac{1}{k} \frac{d v}{v} = \frac{1}{k} d \log v,
\]
and therefore the cohomology class satisfies \( [\alpha] \vert_{B'} = 0 \).

The map \( H^1(B, \mathds{R}) \to H^1(B', \mathds{R}) \) induced by the inclusion \( B' \hookrightarrow B \) is injective. This follows, for instance, from the long exact sequence in cohomology for the pair \((B, B')\), together with the fact that $H^1(B, B', \mathds{R}) = 0$, which holds by excision, homotopy invariance, and the Thom isomorphism, and to the real codimension of \( B \setminus B' \) in \( B \) being at least $2$, see, for example, \cite[Theorem II.5.3]{suwa}. Therefore, we conclude that \([ \alpha ] = 0\) in \( H^1(B, \mathds{R}) \), proving the statement.
\end{proof}

The second tool is the following generalization of the Vaisman lemma to the WLCK context.

\begin{theorem}[{Vaisman Lemma, \cite[Lemma 2.5]{angella-parton-vuletescu}, after \cite{vaisman}}]
Let \(X\) be a compact holomorphic manifold of dimension \(\dim X = n > 1\) endowed with a WLCK structure \(\omega\) with Lee form \(\theta\). If \(X\) admits Kähler metrics, then \([\theta] = 0\).
\end{theorem}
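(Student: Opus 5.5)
We argue by contradiction in the style of Vaisman's classical argument, using a Kähler metric $\Omega$ on $X$ as a test form. Assume $[\theta]\neq 0$ in $H^1(X,\mathds R)$. Since $X$ is compact Kähler, Hodge theory identifies $H^1(X,\mathds R)$ with real harmonic $1$-forms, and every such form can be written as $\eta+\bar\eta$ with $\eta$ a holomorphic $1$-form. After a global conformal change of $\omega$ by a smooth positive function, which preserves the WLCK condition and the degenerate locus $N_\omega$ and shifts $\theta$ by an exact form, we may therefore assume $\theta=\eta+\bar\eta$ with $\eta$ holomorphic and $\eta\neq 0$. In particular $d\theta=0$, $\partial\theta=0$ and $\bar\partial\theta=0$ hold separately.

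The key step is an integration identity. Set $\theta^{0,1}=\bar\eta$, which is $\partial$-closed and $\bar\partial$-closed. From $d\omega=\theta\wedge\omega$, taking the $(1,2)$-part gives $\bar\partial\omega=\bar\eta\wedge\omega$. Then $\partial\bar\partial\omega=\partial(\bar\eta\wedge\omega)=-\bar\eta\wedge\partial\omega=-\bar\eta\wedge\eta\wedge\omega$, so
\[
\sqrt{-1}\,\partial\bar\partial\omega=\sqrt{-1}\,\eta\wedge\bar\eta\wedge\omega .
\]
Wedge this with the closed form $\Omega^{n-2}$ and integrate over $X$. By Stokes' theorem the left-hand side integrates to zero. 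The right-hand side is $\int_X \sqrt{-1}\eta\wedge\bar\eta\wedge\omega\wedge\Omega^{n-2}$, and its integrand is pointwise a non-negative multiple of the volume form, because $\sqrt{-1}\eta\wedge\bar\eta\ge 0$ and $\omega\ge 0$ are semipositive $(1,1)$-forms and $\Omega>0$. Outside the proper analytic set $N_\omega\cup\{\eta=0\}$, where $\omega>0$ and $\eta\neq0$, the integrand is strictly positive. That set is nonempty because $\eta\not\equiv0$, so the integral is strictly positive, which is a contradiction. Hence $\eta=0$ and $[\theta]=0$. Here $n>1$ is exactly what is needed for $\Omega^{n-2}$ to make sense. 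For $n=2$ the argument is the same with no power of $\Omega$.

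The main obstacle I expect is the reduction to a harmonic Lee form. The conformal factor must be smooth and positive everywhere, including on $N_\omega$, which is fine since we only change $\theta$ by $d\log$ of a positive smooth function. We also need to confirm that the WLCK identity $d\omega=\theta\wedge\omega$ continues to hold everywhere after rescaling, including on $N_\omega$. A second point to handle carefully is the strict positivity claim: in $\sqrt{-1}\eta\wedge\bar\eta\wedge\omega\wedge\Omega^{n-2}$, semipositive $\omega$ paired with a rank-one form and $\Omega^{n-2}$ has to be strictly positive at points where $\omega$ is positive definite and $\eta\ne0$. This follows from a pointwise linear-algebra check, diagonalising $\omega$ and $\Omega$ simultaneously.
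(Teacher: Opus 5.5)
Your proof is correct. It shares the paper's first step: a conformal change makes the Lee form $\eta+\bar\eta$ with $\eta$ holomorphic, hence closed, via Hodge theory for the K\"ahler metric. After that you use a different integral identity.

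The paper wedges the first-order relation $d\omega_0^{n-1}=(n-1)\theta_0\wedge\omega_0^{n-1}$ with the closed form $\alpha=\theta_0^{1,0}$ and applies Stokes. Its test form is therefore $\omega_0^{n-1}$ itself, and the output is a positive multiple of $\|\alpha\|^2_{L^2(\omega_0)}$ on $X\setminus N_\omega$. You instead derive $\sqrt{-1}\partial\overline\partial\omega=\sqrt{-1}\eta\wedge\bar\eta\wedge\omega$ and pair it with the closed form $\Omega^{n-2}$.

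As a result you use the K\"ahler metric twice, whereas the paper's argument shows that only the first use matters. The paper's argument runs verbatim whenever $[\theta]$ is represented by $\alpha+\bar\alpha$ with $\alpha$ a $d$-closed $(1,0)$-form, for instance under the $\partial\overline\partial$-Lemma. The second use is also avoidable inside your own computation. The same manipulation gives $\sqrt{-1}\partial\overline\partial\omega^{n-1}=(n-1)^2\sqrt{-1}\eta\wedge\bar\eta\wedge\omega^{n-1}$, whose integral vanishes by Stokes. This is essentially the complex conjugate of the paper's identity.

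In exchange, your formulation gives a transparent positivity statement. After normalization, $\sqrt{-1}\partial\overline\partial\omega$ is a semipositive $\partial\overline\partial$-exact $(2,2)$-form that is non-zero unless $\eta=0$. Hence any $\partial\overline\partial$-closed strictly positive $(n-2,n-2)$-form would serve as a test form, in the spirit of the current-theoretic obstructions recalled in the paper.

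Your pointwise positivity check is fine. One wording fix: what must be non-empty is the set where the integrand is strictly positive, namely the complement of the proper analytic set $N_\omega\cup\{\eta=0\}$, which is non-empty because $X$ is connected.
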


\begin{proof}
Fix a Kähler metric \( g_0 \) on \( X \), and let \( \theta_0 \) be the harmonic representative of \( [\theta] \) with respect to \( g_0 \). Up to a global conformal change, we may assume the WLCK structure \( \omega_0 \) has Lee form \( \theta_0 \).
Decompose \( \theta_0 = \alpha + \overline{\alpha} \) into its pure-type components, where \( \alpha \in \wedge^{1,0} X \). Multiplying the equation $d\omega_0^{n-1} = (n-1) \theta_0 \wedge \omega_0^{n-1}$ by \( \alpha \), we obtain
\[
\alpha \wedge d\omega_0^{n-1} = (n-1) \alpha \wedge \overline{\alpha} \wedge \omega_0^{n-1}.
\]
By the strong Hodge decomposition, \( \alpha \) is itself harmonic, and in particular closed. Therefore,
\[
0 = \int_X \alpha \wedge d\omega_0^{n-1} = (n-1) \|\alpha\|^2_{\omega_0\vert_{X \setminus N_\omega}}.
\]
This implies \( \alpha = 0 \), and hence \( [\theta] = 0 \), concluding the proof.
\end{proof}

We are now ready to state the main result. In the following, we assume:
\begin{itemize}
\item the Strong Factorization Conjecture;
\item the algebraic reduction is flat and its singular locus is a simple normal crossing divisor (note that applying Hironaka flattening may introduce heavy singularities);
\item the canonical bundle \( K_{X^*} \) is nef, namely $K_{X^*}\cdot C\geq 0$ for any curve $C$ in $X^*$; we recall that the existence of minimal models for compact elliptic threefolds that are not uniruled  was established by Grassi \cite{grassi} in the projective case, and more generally by H\"oring and Peternell  \cite{horing-peternell} for K\"ahler threefolds.
\end{itemize}

\begin{theorem}[{\cite{angella-parton-vuletescu}}]
Let \( X \) be a compact holomorphic threefold with algebraic dimension \( a(X) = 2 \), endowed with an LCK metric whose Lee class is non-zero.
Under the technical assumptions stated above, \( X \) is a blown-up elliptic quasi-bundle over a projective surface.
\end{theorem}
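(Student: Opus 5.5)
We work on the diagram $X \xleftarrow{\sigma} \hat X \xrightarrow{c} X^* \xrightarrow{f} B$ provided by the Algebraic Reduction theorem together with the Strong Factorization Conjecture. Here $B$ is a projective surface and $f$ is flat, with elliptic general fibre and simple normal crossing singular locus.

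\textbf{Step 1: transfer the Lee class.} Pull the LCK structure back to a WLCK structure on $\hat X$ via \cite[Proposition 2.2]{angella-parton-vuletescu}. Since $\sigma$ is a composition of blowups with smooth centers, $\sigma^*\colon H^1(X,\mathds R)\to H^1(\hat X,\mathds R)$ is an isomorphism, so the Lee class $[\hat\theta]$ is still non-zero. The same holds for $c$, so $[\hat\theta]=c^*[\theta^*]$ for a unique non-zero class $[\theta^*]\in H^1(X^*,\mathds R)$. Here we use that $b_1$ is a bimeromorphic invariant and that blowup exceptional divisors carry no $H^1$ contributions.

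\textbf{Step 2: the Lee class is not a pullback from $B$.} Apply the Lemma on fibrations to $f\circ c\colon \hat X\to B$. This map is surjective and proper with connected fibres, and $\dim\hat X=3>2=\dim B$. If $[\hat\theta]$ were in $(f\circ c)^*H^1(B,\mathds R)$, the lemma would force $[\hat\theta]=0$. Hence $H^1(X^*,\mathds R)\neq f^*H^1(B,\mathds R)$.

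Consider the Leray spectral sequence of $f$. It gives an exact sequence
\[
0\to H^1(B,\mathds R)\to H^1(X^*,\mathds R)\to H^0(B,R^1f_*\mathds R)\to \cdots.
\]
So the monodromy-invariant part of $R^1f_*\mathds R$ must be non-trivial. In other words, there are global invariant cycles in the first cohomology of the elliptic fibres.

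By the Vaisman Lemma, $X$, $\hat X$ and $X^*$ admit no Kähler metrics. This already rules out the Kähler elliptic fibrations, where invariant classes on the fibres would come from a Kähler class.

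\textbf{Step 3: rigidity of the fibration.} Use the invariant class from Step 2 to show that the $j$-invariant of the fibres is constant and that $f$ has no singular fibres other than multiple fibres.

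First, a non-zero monodromy-invariant class in $H^1$ of an elliptic curve forces the monodromy of $R^1f_*\mathds Z$ (over $B$ minus the discriminant) to fix a rank-one sublattice. Being in $\mathrm{SL}_2(\mathds Z)$, the monodromy is then unipotent or finite. We then argue as follows.
\begin{itemize}
\item Finite monodromy together with the functional invariant $j\colon B\dashrightarrow\mathds P^1$ gives $j$ constant; the period map is holomorphic and bounded on a finite cover.
\item Unipotent monodromy around a component of the discriminant would produce $I_n$-type fibres, i.e.\ cycles of rational curves.
\end{itemize}
These rational curves are excluded using $K_{X^*}$ nef together with the canonical bundle formula $K_{X^*}=f^*(K_B+L)+\sum(m_i-1)F_i$. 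This is combined with flatness and the SNC assumption on the discriminant, and with a degree count that the non-vanishing Lee class forbids. I expect this step to be the main obstacle. The delicate part is translating a non-trivial element of $H^0(B,R^1f_*\mathds R)$, coming from a non-Kähler WLCK structure, into the vanishing of all non-multiple singular fibres.

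My first attempt would be local. Over a small disc transverse to a discriminant component, restrict $\hat\theta$ and integrate it along the vanishing cycle. Combined with the Lemma on fibrations applied locally, this should show that the invariant class pairs non-trivially with a cycle that degenerates, which is a contradiction.

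\textbf{Step 4: conclusion.} With constant $j$ and only multiple fibres, $f$ is an elliptic quasi-bundle over the projective surface $B$, in the sense of \cite{brinzanescu}. Then $X$, being bimeromorphic to $X^*$ through $\hat X$, is a blown-up elliptic quasi-bundle over $B$. Under the Strong Factorization Conjecture and $K_{X^*}$ nef, the map $X\dashrightarrow X^*$ is the contraction $c\circ\sigma^{-1}$, which justifies the phrase ``blown-up''.
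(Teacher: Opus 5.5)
Your Steps 1--2 are sound, and they give a genuinely different entry point from the paper's. The paper slices by a general surface $S_H=(f\circ c)^{-1}(H)$, shows $S_H$ is non-K\"ahler via the Vaisman Lemma and the Lemma on fibrations, and then invokes Br\^{i}nzanescu's structure theorem for non-K\"ahler elliptic surfaces. You instead extract a non-zero global section of $R^1f_*\mathds{R}$ from the Leray sequence.

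Your route is in fact stronger than you use. Every monodromy element fixes a non-zero integral vector, hence is unipotent, so after a change of basis the monodromy acts on the period by $\tau\mapsto\tau+kN$. Then $\exp(2\pi\sqrt{-1}\,\tau/N)$ is a bounded holomorphic function on $B\setminus\Delta$. It extends across $\Delta$ and is constant on the compact $B$, which forces $N=0$. So the monodromy is trivial and $j$ is constant. Your unipotent branch, with $I_n$ fibres to be removed by a canonical bundle formula, is therefore vacuous.

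The genuine gap comes after this. Trivial monodromy does not yet give that every singular fibre is $m\mathcal{E}$, and this is exactly the step you leave as an expectation. It has two parts.

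\begin{itemize}
\item[(a)] Over generic points of $S(f)$ you must still exclude non-minimal fibres carrying trees of rational curves. The paper does this with Br\^{i}nzanescu's theorem on $S_H$ together with $K_{X^*}$ nef. In your framework you would need Kodaira's local classification, which for identity monodromy leaves only ${}_mI_0$ on minimal germs, plus the fact that $K_{X^*}$ nef makes transverse slices minimal.
\item[(b)] More seriously, over the codimension-two points of $S(f)$ (nodes of the SNC divisor and special points on a branch) neither a transverse slice nor monodromy on $B\setminus\Delta$ controls the fibre. The paper passes to a Galois cover so that these fibres become finitely many. It then uses flatness with \cite[Lemma 2.9]{angella-parton-vuletescu}: a fibre with singular reduction would create non-trivial local monodromy, which is impossible because the punctured $2$-ball is simply connected.
\end{itemize}

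Your proposed tools do not supply either part. The canonical bundle formula you quote is Kodaira's surface formula; it does not hold in that form for a threefold over a surface and does not constrain individual fibres. Integrating the Lee class along a vanishing cycle presupposes non-trivial local monodromy, which is already excluded.

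Step 4 has a second gap. Strong Factorization only provides the roof $X\xleftarrow{\sigma}\hat X\xrightarrow{c}X^*$, so a priori $X$ is a blow-down of a blow-up of the quasi-bundle, not a blow-up of it. Nefness of $K_{X^*}$ alone does not make $c\circ\sigma^{-1}$ holomorphic: already for projective threefolds, two smooth minimal models related by a flop are bimeromorphic with neither map holomorphic. The paper needs the separate argument of \cite[Theorem 2.12]{angella-parton-vuletescu}. That result shows $\sigma$ can be taken to be the identity, and that each blow-up in $c$ is centred at a point or along a curve lying in a previously created exceptional divisor.

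A minor point: your remark in Step 2, that non-K\"ahlerness rules out fibrations whose invariant classes ``come from a K\"ahler class'', is off. The product $\mathcal{E}\times B$ is K\"ahler with $b_1>b_1(B)$; the non-K\"ahler input is carried entirely by the Lemma on fibrations.
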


\begin{proof}[Sketch of the proof]
We first study the general fibres and prove that they are all isomorphic. By the continuity of the \( j \)-invariant, it follows that all smooth fibres are isomorphic; denote this common elliptic curve by \( \mathcal{E} \).

Take a general smooth divisor \( H \subset B \). By applying twice the Bertini theorem, we can assume that both \( H \) and  \(S_H := (f \circ c)^{-1}(H)\) are smooth:
\[
\xymatrix{
& \hat{X} \ar[ld]_\sigma \ar[rd]^c & & S_H \ar@{_{(}->}[ll] \ar[dd]^{f \circ c} \\
X \ar@{<-->}[rr] && X^* \ar[d]^f & \\
&& B & H \ar@{_{(}->}[l]
}
\]
We can also assume that \( S_H \) is not contained in the exceptional divisor of \( c \), which assures that \( \sigma^*\omega \) induces a WLCK structure on \( S_H \).

If \( S_H \) were Kähler, then \( [\sigma^* \theta] \vert_{S_H} = 0 \) by the Vaisman Lemma. By standard cohomological arguments, it follows that there exists an analytic subset \( \mathcal{Z} \subseteq B \) such that
\[
[\sigma^* \theta] \vert_{\hat{X} \setminus (f \circ c)^{-1}(\mathcal{Z})} = (f \circ c)^*[\alpha]
\]
for some \( [\alpha] \in H^1(B \setminus \mathcal{Z}) \). By the Lemma on fibrations, it follows that \( [\sigma^* \theta] \vert_{\hat{X} \setminus (f \circ c)^{-1}(\mathcal{Z})} = 0 \), and therefore \( [\theta] = 0 \), which contradicts the assumption that the Lee class is non-trivial.  
Therefore, \( S_H \) is not Kähler. We can now apply the Br\^{i}nzanescu theorem on non-K\"ahler surfaces, see {\itshape e.g.} \cite[Proposition 3.17]{brinzanescu-book}, to conclude that the general fibres of \( S_H \to H \) are isomorphic. Therefore, the general fibres of \( f \) are isomorphic.

We now consider singular fibres. 
First, by applying Bertini's theorem and Br\^inz\v{a}nescu's result for surfaces, we prove that the general singular fibres over \( p \in S(f) \) are just multiple fibres of the form \( m\mathcal{E} \), with no trees of rational curves appearing due to the minimality assumption on \( X^* \).

Under the assumption that \( S(f) \) is a simple normal crossing divisor, the singular fibres that are not of the form \( m\mathcal{E} \) occur only at special points: either smooth points of a single branch, or nodes at the intersection of branches of different types, say \( m\mathcal{E} \) and \( q\mathcal{E} \). Up to a suitable Galois covering, these singular fibres form a finite set.

Finally, assuming \( f \) flat, we observe that singular fibres, endowed with the reduced structure, cannot be isolated, see \cite[Lemma 2.9]{angella-parton-vuletescu}. Naively, the reason is that a fibre with singular reduction would induce a non-trivial local monodromy around it, but this is impossible, since the punctured \( 2 \)-dimensional ball is simply connected.

The final step in the study of the geometric structure is to show that the map \(\sigma\) can be taken to be the identity and that the blow-ups in \(c\) are ``special'', in the sense that each blow-up is either at a point or along a curve contained in an exceptional divisor produced in an earlier step, see \cite[Theorem 2.12]{angella-parton-vuletescu}.
\end{proof}

\ifdefined\frommain
\else
    \bibliographystyle{alpha}
    \bibliography{biblio}
\fi

\subsection{Problems}

\begin{exercise}
Prove that, on a Hermitian manifold $(X,J,g,\omega)$, the Levi-Civita connection $D$ and the Chern connection $\nabla$ coincide if and only if the metric is K\"ahler.
\end{exercise}

\begin{proof}[Hint]
Prove the identities $3d\omega(V,W,Z) = g((D_VJ)W,Z) + g((D_WJ)Z,V) + g((D_ZJ)V,W)$ and $2g((D_VJ)W,Z) = 3d\omega(V,W,Z) - 3d\omega(V,JW,JZ)$.
\end{proof}

\begin{exercise}[Hodge decomposition]
Let \( X \) be a compact Kähler manifold. Prove that there exists a decomposition
\[
H^k_{\mathrm{dR}}(X, \mathds{C}) \simeq \bigoplus_{p+q=k} H^{p,q}_{\overline{\partial}}(X),
\quad
\text{ with }
\quad
H^{p,q}_{\overline{\partial}}(X) \simeq \overline{H^{q,p}_{\overline{\partial}}(X)}.
\]
\end{exercise}

\begin{proof}[Hint]
Let \( L \colon \Omega^k(X) \to \Omega^{k+2}(X) \) denote the Lefschetz operator defined by \( L = \omega \wedge \_ \), and let \( \Lambda \) its adjoint. Together with the counting operator \( H \), defined on \(\Omega^k(X)\) by \( H|_{\Omega^k(X)} = (n-k) \cdot \mathrm{id}\), where \( n = \dim_{\mathds{C}} X \), these operators give rise to an \(\mathfrak{sl}(2,\mathds{R})\)-representation on the space of differential forms.

Using this structure, one can verify the following Kähler identities:
\begin{align*}
[\overline{\partial}, L] &= 0, & \quad [\partial, L] &= 0, \\
[\overline{\partial}^*, L] &= \sqrt{-1}\, \partial, & \quad [\partial^*, L] &= -\sqrt{-1}\, \overline{\partial}, \\
[\overline{\partial}, \Lambda] &= \sqrt{-1}\, \partial^*, & \quad [\partial, \Lambda] &= -\sqrt{-1}\, \overline{\partial}^*, \\
[\overline{\partial}^*, \Lambda] &= 0, & \quad [\partial^*, \Lambda] &= 0,
\end{align*}
on each indecomposable component of the Lefschetz decomposition.

Another way to prove the Kähler identities is to notice that every Kähler metric osculates to order $2$ the standard Hermitian metric on \(\mathds{C}^n\). Therefore, one can reduce the proof to the corresponding identities on \(\mathds{C}^n\) equipped with the standard Kähler structure, which are known as the Akizuki–Nakano identities, see \cite{demailly-agbook}.

Finally, these identities imply that the Laplacians coincide up to a constant factor:
\[
\Delta = 2 \Box = 2 \overline{\Box}.
\]
The result then follows directly from Hodge theory.
\end{proof}

\begin{proof}[Remark]
Note that the Hodge decomposition above appears to depend on the choice of metric, since it is stated in terms of harmonic forms. However, for compact Kähler manifolds, the decomposition actually descends to cohomology independently of the metric, thanks to the \(\partial\overline{\partial}\)-Lemma property, which ensures a canonical pure-type decomposition in cohomology.
\end{proof}

\begin{exercise}[{\cite{weil, deligne-griffiths-morgan-sullivan}}]
Prove that compact K\"ahler manifolds satisfy the $\partial\overline\partial$-Lemma property.
\end{exercise}

\begin{proof}
Let \(\alpha \in \Omega^{p,q}(X)\) be a \(\partial\)-closed, \(\overline{\partial}\)-closed, and \(d\)-exact form on \(X\). In particular, by the Hodge theory for the de Rham cohomology, the form \(\alpha\) is orthogonal to the space of \(\Delta\)-harmonic forms. 
Note that, by the Kähler identities, the space of \(\Delta\)-harmonic forms coincides with the space of \(\Box\)-harmonic forms and with the space of \(\overline{\Box}\)-harmonic forms. Since \(\alpha\) is \(\partial\)-closed and orthogonal to the space of \(\Box\)-harmonic forms, thanks again to the Hodge theory for the conjugate Dolbeault cohomology, we have that there exists \(\gamma \in \Omega^{p-1,q}(X)\) such that $\alpha = \partial \gamma$.
Applying the Hodge theory for the Dolbeault cohomology to \(\gamma\), we obtain a \(\overline{\Box}\)-harmonic form \(h_\gamma\), a form \(\beta \in \Omega^{p-1,q-1}(X)\), and a form \(\eta \in \Omega^{p-1,q+1}(X)\) such that \( \gamma = h_\gamma + \overline{\partial}\beta + \overline{\partial}^*\eta \).
By the Kähler identities, we have \([\partial, \overline{\partial}^*] = 0\), and \(h_\gamma\) is also \(\Box\)-harmonic, in particular \( \partial \)-closed. Hence, \( \alpha = \partial \gamma = \partial h_\gamma + \partial \overline{\partial} \beta + \partial \overline{\partial}^* \eta = \partial \overline{\partial} \beta + \overline{\partial}^* \partial \eta \).
It suffices to prove that \(\overline{\partial}^* \partial \eta = 0\). Indeed, since \(\alpha\) is \(\overline{\partial}\)-closed, we get \( \overline{\partial} \overline{\partial}^* \partial \eta = 0 \).
Therefore,
\[
\|\overline{\partial}^* \partial \eta\|^2 = \langle \overline{\partial}^* \partial \eta, \overline{\partial}^* \partial \eta \rangle = \langle \overline{\partial} \overline{\partial}^* \partial \eta, \partial \eta \rangle = 0,
\]
which implies \(\overline{\partial}^* \partial \eta = 0\). 
Hence, \(\alpha = \partial \overline{\partial} \beta\) is \(\partial \overline{\partial}\)-exact, as claimed.
\end{proof}

\begin{exercise}
Prove that, on a compact K\"ahler manifold, the even Betti numbers are positive and the odd Betti numbers are even.
\end{exercise}

\begin{proof}[Hint]
For the even Betti numbers, let \(\omega\) denote the symplectic form associated to the K\"ahler metric. Note that the cohomology class \([\omega]\) is non-trivial, as are its non-trivial powers \([\omega^k]\).
For the odd Betti numbers, use the Hodge decomposition.
\end{proof}

\begin{exercise}
Prove that the Hopf manifold does not admit any Kähler metric. More generally, show that the differentiable manifold \(S^1 \times S^{2n-1}\) does not admit any complex structure compatible with a Kähler metric.
\end{exercise}

\begin{proof}[Hint]
Use the K\"unneth formula to compute $b_2(S^1 \times S^{2n-1})=0$.
\end{proof}

\begin{exercise}
Let $X$ be a compact holomorphic manifold of complex dimension $n > 2$, endowed with a Hermitian metric $\omega$.
\begin{itemize}
\item Prove that there always exists a $1$-form $\theta$ such that
\[ d\omega^{n-1} = \theta \wedge \omega^{n-1}, \]
called the (balanced) \emph{Lee form}, or torsion $1$-form. It corresponds to the trace of the torsion of the Chern connection.
\item Prove that $\omega$ is Gauduchon if and only if $d^*\theta = 0$.
\item Show that the Chern Laplacian \( \Delta^{\mathrm{Ch}} f := 2 \mathrm{tr}_\omega (\sqrt{-1} \partial \overline{\partial} f) \) and the Hodge–de Rham Laplacian $\Delta$ are related by
\[ \Delta^{\mathrm{Ch}} f = \Delta f + g(df, \theta). \]
\end{itemize}
\end{exercise}

\begin{exercise}[{\cite{alexandrov-ivanov}}]
Prove that, on a compact holomorphic manifold, a Hermitian metric that is both pluriclosed and balanced, must be Kähler.
\end{exercise}

\begin{proof}[Hint]
Following \cite{popovici-book}, show that, if $\omega$ is a pluriclosed metric, then $*\partial\omega\in\ker\partial^*$ and that, if $\omega$ is balanced, then $\omega^{n-2}\wedge\partial\omega=0$, namely $\partial\omega$ is a primitive form. In particular, if $\partial\omega$ is primitive, then $*\partial\omega\in\text{Im}\,\partial$. Since $\ker\partial^*\cap\text{Im}\,\partial=\left\lbrace 0\right\rbrace$, one concludes that $\partial\omega=0$, that is $\omega$ is K\"ahler.
\end{proof}

\begin{exercise}
Prove that the Iwasawa manifold \( X = \sfrac{{\rm Heis}(3, \mathds{C})}{\mathds{Z}[\sqrt{-1}]} \) admits a balanced Hermitian metric. Show that it does not admit any pluriclosed metrics.
\end{exercise}

\begin{proof}[Hint]
First, prove that \( X \) does not admit any \emph{invariant} pluriclosed metrics, meaning a metric induced by a Hermitian metric on \( \mathrm{Heis}(3, \mathds{C}) \) that is invariant under left-translations. (See also \cite{fino-parton-salamon} for a classification of six-dimensional nilmanifolds admitting pluriclosed metrics.) Then, apply the Belgun symmetrization trick \cite[Theorem 7]{belgun} to conclude that there are no pluriclosed metrics at all on \( X \).
\end{proof}

\begin{proof}[Remark]
The previous result is not specific for the Iwasawa manifold. In fact, more in general, it was shown in \cite{fino-vezzoni-PAMS} that a (2-step) nilmanifold endowed with an invariant complex structure admitting both a pluriclosed metric and a balanced metric is then a torus. Notice that the nilpotency step assumption is not necessary, due to \cite{arroyo-nicolini}. Alternatively, by \cite{egidi} one can show that the Iwasawa manifold does not admit any pluriclosed metric by showing that there exists a non-zero current $T$ of bi-degree $(n-1,n-1)=(2,2)$ such that $T\geq 0$ and $T$ is $\partial\overline{\partial}$-exact. More precisely, the $(2,2)$-form $\partial\overline{\partial}(\varphi^{3}\wedge\bar\varphi^{3}$ gives rise to such a current.
\end{proof}

\begin{exercise}[{\cite{guan-li, chiose-Toulouse}}]
Let \( X \) be a compact holomorphic manifold of complex dimension \( n \), endowed with a Hermitian metric \( \omega \). Prove that the following are equivalent.
\begin{enumerate}
\item The total volume is invariant under \( \sqrt{-1}\partial\overline\partial \)-deformations, namely, for any smooth real function \( \varphi \in \mathcal{C}^\infty(X, \mathds{R}) \) such that \( \omega + \sqrt{-1}\partial\overline\partial \varphi > 0 \), it holds
\[
\int_X \left( \omega + \sqrt{-1}\partial\overline\partial \varphi \right)^n = \int_X \omega^n.
\]
\item The metric \( \omega \) satisfies the so-called \emph{Guan-Li condition}:
\[
\sqrt{-1}\,\partial\overline\partial\omega = 0 \quad \text{and} \quad \sqrt{-1}\,\partial\omega\wedge\overline\partial\omega = 0.
\]
\item The metric \( \omega \) satisfies
\[
\sqrt{-1}\,\partial\overline\partial\omega^k = 0 \quad \text{for all } 1 \leq k \leq n-1.
\]
\end{enumerate}
\end{exercise}

\begin{exercise}
Compute the Chern curvature of the Hopf surface \( X = \sfrac{\mathds{C}^2 \setminus \{0\}}{\mathds{Z}} \), where \( \mathds{Z} \) acts via \( (z_1, z_2) \mapsto \left( \frac{1}{2} z_1, \frac{1}{2} z_2 \right) \), endowed with the Hermitian metric
\[
\omega = \frac{\sqrt{-1} \, dz_1 \wedge d\bar{z}_1 + \sqrt{-1} \, dz_2 \wedge d\bar{z}_2}{|z_1|^2 + |z_2|^2} .
\]
In particular, show that the second Chern-Ricci form satisfies
\[
\mathrm{Ric}^{(2)}(\omega) = 2\omega.
\]
\end{exercise}

\begin{proof}[Hint]
For explicit computations, see {\itshape e.g.} \cite{angella-sferruzza}.
\end{proof}

\ifdefined\frommain
\else
    \bibliographystyle{alpha}
    \bibliography{biblio}
\fi

\cleardoublepage

\section{Hermitian metrics of constant Chern scalar curvature}\label{sec:chern-yamabe}

In the following section, we consider a version of the Yamabe problem in the Hermitian setting: we look for Hermitian metrics in a fixed conformal class with constant scalar curvature with respect to the Chern connection, following \cite{angella-calamai-spotti}. Motivated by the role of scalar curvature as a moment map in the Donaldson-Fujiki picture for Kähler geometry, we also propose a new notion of canonical metric in the LCK setting, see \cite{angella-calamai-pediconi-spotti, angella-pediconi-scarpa-spotti-windare}.

\subsection{Curvature of the Chern connection}

Let \( X \) be a compact holomorphic manifold of complex dimension \( n \). Given a fixed Hermitian structure \( h \), we denote by $\nabla^{Ch}$ the Chern connection and with $R^{Ch}$ the Chern curvature tensor defined by $R^{Ch}(x,y)=\nabla^{Ch}_x\nabla^{Ch}_y-\nabla^{Ch}_y\nabla^{Ch}_x-\nabla^{Ch}_{[x,y]}$. Then, $R^{Ch}\in\wedge^2(X,\text{End}(TX))\simeq\wedge^2(X,TX\otimes \overline{TX})$. In local holomorphic coordinates $\left\lbrace z^j\right\rbrace$, denoting with $h_{i\bar j}$ the Hermitian matrix representing $h$, the curvature has the following expression
$$
R^{Ch}=R^{Ch}_{i\bar jk\bar l}\sqrt{-1}dz^i\wedge d\bar z^j\otimes\sqrt{-1}dz^k\wedge d\bar z^l
$$
where
$$
R^{Ch}_{i\bar jk\bar l}=-\frac{\partial^2h_{k\bar l}}{\partial z^i\partial \bar z^j}+h^{p\bar q}\frac{\partial h_{k\bar q}}{\partial z^i}\frac{\partial h_{p\bar l}}{\partial\bar z^j}\,.
$$
Since the curvature tensor does not have the Bianchi symmetry, as already recalled in the previous section, there are three different ways to contract $R^{Ch}$.
\begin{itemize}
\item The \emph{first Chern-Ricci form} is defined as
    \begin{align*}
    \text{Ric}^{(1)}(\omega)&=
    \text{tr}R^{Ch}=\text{tr}R^{Ch\,\bullet}_{i\bar j\bullet}\sqrt{-1}dz^i\wedge d\bar z^j\\
    &=h^{k\bar l}R^{Ch}_{i\bar jk\bar l}\sqrt{-1}dz^i\wedge d\bar z^j=
    -\frac{\partial^2\text{log}\,\text{det}(h_{k\bar l})}{\partial z^i\partial \bar z^j}dz^i\wedge d\bar z^j
    \end{align*}
It is a closed $(1,1)$-form representing the first Bott-Chern class $c_1^{BC}(X)\in H^{1,1}_{BC}(X,\mathds{R}):=\left\lbrace d\text{-closed real } (1,1)\text{-forms}\right\rbrace/
\left\lbrace \sqrt{-1}\partial\overline\partial\psi,\,\psi\in\mathcal{C}^\infty(X,\mathds{R})\right\rbrace$.
\item The \emph{second Chern-Ricci form} is defined as
    \begin{align*}
    \text{Ric}^{(2)}(\omega)&=
    \text{tr}_gR^{Ch}_{\bullet\bullet k\bar l}\,\sqrt{-1}dz^k\wedge d\bar z^l\\
    &=h^{i\bar j}R^{Ch}_{i\bar jk\bar l}\,\sqrt{-1}dz^k\wedge d\bar z^l
    \end{align*}
It is a $(1,1)$-form that is not $d$-closed in general.
\item The \emph{third Chern-Ricci form} is defined as
    \begin{align*}
    \text{Ric}^{(3)}(\omega)_{k\bar j}&=
    \text{tr}_gR^{Ch}_{\bullet\bar j k\bullet}\\
    &=h^{i\bar l}R^{Ch}_{i\bar jk\bar l}\,.
    \end{align*}
\end{itemize}
In the cases when the Chern connection satisfies the first Bianchi identity, then the Ricci curvatures coincide. Hermitian manifolds satisfying this property are called \emph{Chern K\"ahler-like} \cite{yang-zheng, angella-otal-ugarte-villacampa}. Notice that, to our knowledge, there are no examples of compact Chern K\"ahler-like manifolds that are not Chern-flat.

The \emph{Chern scalar curvature} takes then the following expression
$$
s^{Ch}(\omega)=\text{tr}_g\,{\rm Ric}^{(1)}=\text{tr}\,{\rm Ric}^{(2)}
=h^{i\bar j}h^{k\bar l}R^{Ch}_{i\bar j k\bar l}\,.
$$
Consider now a conformal change of the Hermitian metric $\omega_f=e^f\omega$, with
$f\in\mathcal{C}^\infty(X,\mathds{R})$. Denote by $R^{Ch}_f$ its Chern curvature. Then, the curvature changes as
$$
(R^{Ch}_f)_{i\bar j k\bar l}=e^f(R^{Ch}_{i\bar jk\bar l}-h_{k\bar l}\partial^2_{i\bar j}f)\,.
$$
Hence, for the Ricci forms we have
$$
\text{Ric}^{(1)}(\omega_f)=\text{Ric}^{(1)}(\omega)-n\sqrt{-1}\partial\overline{\partial}f\,,\quad
\text{Ric}^{(2)}(\omega_f)=\text{Ric}^{(2)}(\omega)+\frac{1}{2}\omega\Delta^{Ch}_{\omega}f,
$$
where $\Delta^{Ch}$ is the Chern Laplacian with local expression $\Delta^{Ch}_\omega f=-2 h^{i\bar j}\partial^2_{i\bar j}f$.\\
As a consequence, the scalar curvature changes as
$$
s^{Ch}(\omega_f)=e^{-f}\left(s^{Ch}(\omega)+\frac{n}{2}\Delta^{Ch}_\omega f\right).
$$

\subsection{Chern-Yamabe problem}

Let \( X \) be a compact holomorphic manifold of complex dimension \( n \). Given a fixed Hermitian metric \( \omega \), we denote by \( s^{Ch}(\omega) \) its scalar curvature with respect to the Chern connection, and by
\[ \{ \omega \} := \left\{ \exp(f) \omega \,\middle|\, f \in \mathcal{C}^\infty(X, \mathds{R}) \right\} \]
its conformal class, which consists entirely of Hermitian metrics.

Motivated by the classical Yamabe problem in Riemannian geometry, we ask whether the conformal class \( \{ \omega \} \) contains one or more Hermitian metrics with constant scalar curvature with respect to the Chern connection.
More precisely, we aim to study the moduli space
\[
\mathcal{C}h\mathcal{Y}a(X,\{\omega\}) := 
\frac{\left\{ \omega' \in \{\omega\} \;\middle|\; s^{Ch}(\omega') \text{ is constant} \right\}}{ \mathcal{HC}onf(X,\{\omega\}) \times \mathds{R}^{>0} },
\]
of Hermitian metrics with constant Chern scalar curvature, modulo the action of the group \( \mathcal{HC}onf(X,\{\omega\}) \) of biholomorphisms preserving the conformal class \( \{\omega\} \), and modulo the scaling action of \( \mathds{R}^{>0} \) by homotheties.

Let us observe that this problem is fundamentally different from the classical Yamabe problem. Indeed, as shown in \cite[Corollary 4.5]{liu-yang}, on a compact holomorphic manifold, if the (average of the) Chern scalar curvature of a Hermitian metric coincides with the (average of the) scalar curvature of the associated Riemannian metric, then the metric must be Kähler. For the same reason, the Chern–Yamabe problem also differs from the Yamabe-type problem for almost Hermitian manifolds studied by del Rio and Simanca in \cite{delrio-simanca}.

Our main goal is to investigate the properties of the moduli space of Chern scalar curvature metrics \( \mathcal{C}h\mathcal{Y}a(X,\{\omega\}) \). In particular, we ask whether it is always non-empty (a question we refer to as the {\em Chern-Yamabe problem}), when it reduces to a single point, and whether it can be compact under suitable assumptions.

As we will see later, under suitable normalizations, the expected constant curvature is a cohomological invariant, depending solely on the holomorphic manifold and the conformal class, known as the Gauduchon degree and denoted by $\Gamma_X(\{\omega\})$. We will prove the following theorem, conjecturing that the same result holds without the assumption that the Gauduchon degree is non-positive.

\begin{theorem}[{\cite{angella-calamai-spotti}}]
Let \(X\) be a compact holomorphic manifold endowed with a conformal class \(\{\omega\}\) of Hermitian metrics.
If the Gauduchon degree satisfies \(\Gamma_X(\{\omega\}) \leq 0\), then the Chern–Yamabe moduli space consists of a single point.
In particular, this result holds for any conformal class on a manifold with non-negative Kodaira dimension.
\end{theorem}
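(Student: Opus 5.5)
Fix the Gauduchon representative $\eta\in\{\omega\}$ given by \Cref{thm:gauduchon}, normalized to volume one, and set $\Gamma_X(\{\omega\}) := \int_X s^{Ch}(\eta)\,\eta^n$. This quantity depends only on $\{\omega\}$, since the Gauduchon metric is unique up to scaling. Writing $\omega' = e^{2f/n}\eta$ (up to constants in the exponent), the conformal change formula for $s^{Ch}$ recalled above turns the condition $s^{Ch}(\omega')=\lambda$ into the semilinear elliptic equation
\[
\Delta^{Ch}_\eta f + s^{Ch}(\eta) = \lambda\, e^{2f/n}.
\]
Integrating against $\eta^n$ and using that $\eta$ is Gauduchon kills the Laplacian term, because $(\Delta^{Ch}_\eta)^*(1)=0$ exactly by the Gauduchon condition. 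We obtain $\Gamma_X = \lambda\int_X e^{2f/n}\eta^n$, so the sign of $\lambda$ equals the sign of $\Gamma_X$. The argument then splits into the cases $\Gamma_X=0$ and $\Gamma_X<0$.

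\textbf{Case $\Gamma_X=0$.} Here $\lambda=0$ and the equation is linear: $\Delta^{Ch}_\eta f = -s^{Ch}(\eta)$. The proof of \Cref{thm:gauduchon} gives $\ker(\Delta^{Ch})=$ constants and $\ker(\Delta^{Ch})^*=\mathds{R}\cdot 1$ for the Gauduchon metric. The Fredholm alternative then yields a solution if and only if $\int_X s^{Ch}(\eta)\eta^n=0$, which holds by assumption. The solution is unique up to additive constants, i.e.\ up to homotheties, so the moduli space is a single point.

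\textbf{Case $\Gamma_X<0$.} Normalize $\lambda=-1$ and apply the continuity method to the family
\[
\Delta^{Ch} f + t\, s^{Ch}(\eta) + (1-t)c = -e^{2f/n}, \qquad t\in[0,1],
\]
with $c<0$ chosen so that $f=$ const solves the equation at $t=0$.

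\emph{Openness.} The linearization is $\Delta^{Ch} + \frac{2}{n}e^{2f/n}$. This is an operator with no first-order obstruction and positive zero-order term, so the maximum principle makes it injective. Its index equals that of the Laplacian, namely zero, so it is also surjective. The implicit function theorem in $C^{2,\alpha}$ then gives openness.

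\emph{Closedness.} A $C^0$ bound comes from the maximum principle. At a maximum point of $f$ we have $\Delta^{Ch}f\ge 0$ with the sign convention $\Delta^{Ch}=-2h^{i\bar j}\partial_i\partial_{\bar j}$, hence $e^{2f/n}\le \max(-s)$ up to harmless constants; the minimum is handled symmetrically. The positivity of $-\lambda$ is exactly what makes both bounds work. Schauder and elliptic estimates then bootstrap to higher regularity.

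\emph{Uniqueness.} Suppose $f_1,f_2$ are two solutions, and consider the maximum of $u=f_1-f_2$. Subtracting the equations gives $\Delta^{Ch}u = -(e^{2f_1/n}-e^{2f_2/n})$. At the maximum point the left-hand side is $\ge 0$, which forces $u\le 0$; symmetrically $u\ge 0$. Hence $u\equiv 0$ once $\lambda=-1$ is fixed. Scaling accounts for the quotient by $\mathds{R}^{>0}$.

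The main obstacle is the zero-order bounds together with keeping the sign of $\lambda$ fixed. The integral identity above relies on working with the Gauduchon representative, so the choice of $\eta$ as background metric is essential. I would secure the identity $\int_X \Delta^{Ch}_\eta f\,\eta^n=0$ for Gauduchon $\eta$ first.

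\textbf{Kodaira dimension statement.} If $\mathrm{Kod}(X)\ge0$, there is a nonzero section $\sigma\in H^0(X,K_X^{\otimes\ell})$. Recall that $\mathrm{Ric}^{(1)}$ represents $c_1^{BC}(X)=-c_1(K_X)$. Computing $\sqrt{-1}\partial\overline\partial\log|\sigma|^2_{h^{\ell}}$ in the sense of currents, with $h$ the metric induced on $K_X$ by $\eta$, gives $\ell\,\mathrm{Ric}^{(1)}(\eta) = -[\mathrm{div}\,\sigma] + \sqrt{-1}\partial\overline\partial\log|\sigma|^2$. Pairing with $\eta^{n-1}$, which is $\partial\overline\partial$-closed, makes the $\partial\overline\partial$-exact term vanish, while the effective divisor contributes non-positively. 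Therefore $\Gamma_X=\int\mathrm{Ric}^{(1)}\wedge\eta^{n-1}\cdot n\le0$, and the first part of the theorem applies.
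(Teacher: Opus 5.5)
\textbf{Gap in the $C^0$ lower bound.} Your overall plan is the paper's: Gauduchon normalization, the Fredholm alternative when $\Gamma_X=0$, the continuity method with the maximum principle when $\Gamma_X<0$, and the divisor of a pluricanonical section for the Kodaira remark. The closedness step, however, fails as written.

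You run the continuity path with background $\eta$. For the Gauduchon metric you only know $\int_X s^{Ch}(\eta)\,\eta^n<0$, not that $s^{Ch}(\eta)<0$ pointwise. The upper bound is fine: at a maximum point, $e^{2f_t/n}\le\max\{\max_X(-s^{Ch}(\eta)),-c\}$. At a minimum point $q$, though, the equation only gives
\[
e^{2f_t(q)/n}\;\ge\;-t\,s^{Ch}(\eta)(q)-(1-t)c .
\]
For $t$ close to $1$ the right-hand side is non-positive wherever $s^{Ch}(\eta)\ge 0$, so this yields no lower bound on $f_t$. The minimum is therefore not handled ``symmetrically'', and the sign of $\lambda$ alone does not make both bounds work. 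Without a lower $C^0$ bound you get no uniform elliptic estimates, so closedness, and hence existence, is not established.

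\textbf{The paper's fix.} First make a preliminary conformal change. Solve the linear equation $\Delta^{Ch}_\eta g=-s^{Ch}(\eta)+\Gamma_X$; this is solvable by exactly your $\Gamma_X=0$ argument, since the right-hand side has zero $\eta^n$-integral. Then $\omega:=e^{2g/n}\eta$ satisfies $s^{Ch}(\omega)=e^{-2g/n}\Gamma_X<0$ everywhere. Running the continuity method with $\omega$ as background, the minimum principle gives $e^{2f_t(q)/n}\ge\min\{\min_X(-s^{Ch}(\omega)),-c\}>0$.

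\textbf{An alternative fix keeping $\eta$.} Combine your upper bound with two further facts.
\begin{itemize}
\item The integral identity $\int_X e^{2f_t/n}\eta^n=-t\,\Gamma_X-(1-t)c\int_X\eta^n$ holds, and its right-hand side is bounded below by a positive constant independent of $t$. This forces $\max_X f_t$ to be bounded below.
\item The upper bound makes $\|\Delta^{Ch}_\eta f_t\|_{L^\infty}$ uniformly bounded. Since $\Delta^{Ch}_\eta$ has kernel and cokernel equal to the constants, elliptic $L^p$ estimates then bound the oscillation of $f_t$.
\end{itemize}
Together these give a lower bound on $\min_X f_t$.

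Your openness, uniqueness, and Kodaira-dimension arguments are fine.
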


\subsection{Liouville-type equation}

We translate the Chern-Yamabe problem into a nonlinear PDE, in fact, a semilinear one, similar to the Liouville equation.

We first notice that, under a conformal change of the form \( e^{\sfrac{2f}{n}} \omega \), the Chern scalar curvature transforms according to the formula:
\[
s^{Ch}\left( e^{\sfrac{2f}{n}} \omega \right) = e^{-\sfrac{2f}{n}} \left( s^{Ch}(\omega) + \Delta^{Ch} f \right).
\]
We recall that the Chern Laplacian operator \( \Delta^{Ch} \), associated to \( \omega \), is given by
\[
\Delta^{Ch} f = -2\sqrt{-1} \, \mathrm{tr}_\omega(\partial\overline{\partial} f) = \Delta f + g(df, \theta),
\]
where \( \theta \) is the so-called \emph{Lee form} (or \emph{torsion 1-form}), defined by the identity \( d\omega^{n-1} = \theta \wedge \omega^{n-1} \).
Two important special cases are of interest.
\begin{itemize}
\item If \( \omega \) is balanced, that is, \( d\omega^{n-1} = 0 \), (equivalently, \( \theta = 0 \),) then the Chern Laplacian \( \Delta^{Ch} \) reduces to the usual Hodge–de Rham Laplacian \( \Delta \) on functions.

\item If \( \omega \) is Gauduchon, meaning \( \partial\overline{\partial} \omega^{n-1} = 0 \), (equivalently, \( d^*_\omega \theta = 0 \),) then the Green-type identity
\[
\int_X \Delta^{Ch} f \, \omega^n = 0
\]
holds for any smooth function \( f \).
\end{itemize}

The Chern–Yamabe problem is then reduced to finding a smooth function \( f \in \mathcal{C}^\infty(X, \mathds{R}) \) and a constant \( \lambda \in \mathds{R} \) such that
\begin{equation}\tag{ChYa}\label{eq:ChYa}
s^{Ch}(\omega) + \Delta^{Ch} f = \lambda \cdot e^{\sfrac{2f}{n}} ,
\end{equation}
which corresponds to \( s^{Ch}(\exp({\sfrac{2f}{n}})\omega) = \lambda \).

We now give a geometric interpretation of the expected constant Chern scalar curvature \(\lambda\).  
To do so, we fix a suitable reference metric in the conformal class.  


More precisely, we fix the Gauduchon representative \(\eta\) of volume \(1\) in the conformal class \(\{\omega\}\). All geometric quantities are then referred to this choice.
Up to homotheties, we also normalize the solution \(f\) by imposing the condition
\[
\frac{1}{n!} \int_X \exp\left(\sfrac{2f}{n}\right) \eta^n = 1.
\]
Note that, by the Jensen inequality, the volume of any metric in the normalized slice is always greater than or equal to $1$. It follows that, when considering the more geometric slice of metrics with unit volume, the expected constant Chern scalar curvature has absolute value greater than or equal to the Gauduchon degree.

By integrating \eqref{eq:ChYa}, we find that the expected constant Chern scalar curvature \( \lambda \) is a cohomological invariant, completely determined by \( X \) and the conformal class \( \{\omega\} \):
\[
\lambda = \int_X s^{Ch}(\eta)\, \eta^n = \frac{1}{(n-1)!} \int_X c_1^{BC}(K_X^{-1}) \wedge [\eta^{n-1}] ,
\]
where \( c_1^{BC}(K_X^{-1}) \in H^{1,1}_{BC}(X, \mathds{R}) \) maps to the first Chern class of \( X \) in \( H^2(X, \mathds{R}) \), and \( [\eta^{n-1}] \in H^{n-1,n-1}_A(X, \mathds{R}) \) is well defined thanks to \( \eta \) being Gauduchon.
This quantity is known as the \emph{Gauduchon degree} \( \Gamma_X(\{\omega\}) \) of the conformal class \( \{ \omega \} \). It is equal to the volume of the divisor associated with any meromorphic section of the anti-canonical line bundle $K_X^{-1}$, measured with respect to the Gauduchon metric $\eta$, see \cite{gauduchon-cras-2, gauduchon-mathann}.
Recall that the Kodaira dimension measures the growth of sections of the pluricanonical bundles. When \(\mathrm{Kod}(X) \geq 0\), the existence of holomorphic sections of the pluricanonical bundle implies that the anti-canonical bundle admits meromorphic sections with negative volume. As a consequence, the Gauduchon degree satisfies \( \Gamma_X(\{\omega\}) \leq 0 \) for any conformal class \(\{\omega\}\). More precisely, \(\Gamma_X(\{\omega\}) < 0\) unless the canonical bundle \(K_X\) is holomorphically torsion, \emph{i.e.}, there exists an integer \(\ell\) such that \( K_X^{\otimes \ell} = \mathcal{O}_X \), in which case \(\Gamma_X(\{\omega\}) = 0\) for all conformal classes.

\subsection{The case of expected zero Chern scalar curvature}

We first consider the case \( \Gamma_X(\{\omega\}) = 0 \), meaning that the expected Chern scalar curvature is identically zero. In this case, the Chern–Yamabe equation reduces to a linear pde:
\[
\Delta^{Ch}(f) = -s^{Ch}(\omega).
\]
To show that it is solvable, it suffices to prove that  \( -s^{Ch}(\omega) \in \operatorname{im}(\Delta^{Ch}) = \ker((\Delta^{Ch})^*)^\perp \). In this case, the solution is clearly unique up to an additive constant, since \(\ker \Delta^{Ch}\) consists of constant functions.

Note that the Gauduchon condition on \( \eta \) guarantees that the kernel of the adjoint \( (\Delta^{Ch})^* \) consists only of constant functions (cf. the proof of Theorem \ref{thm:gauduchon}). Indeed, a straightforward computation shows that
\[
(\Delta^{Ch})^*f = \Delta f - g(df, \theta) .
\]
If \( f \in \ker(\Delta^{Ch})^* \), then
\[
0 = \int_X f \, (\Delta^{Ch})^* f \, \eta^n = \int_X \left( |df|^2 - \frac{1}{2} g(d(f^2), \theta) \right) \eta^n = \int_X |df|^2 \, \eta^n,
\]
since \( d^* \theta = 0 \) for a Gauduchon metric.

Therefore, the condition \( \Gamma_X(\{\omega\}) = 0 \) ensures that \( -s^{Ch} \) lies in the orthogonal complement of \( \ker((\Delta_\eta^{Ch})^*) = \mathds C\), thus making the linear pde solvable.

\subsection{The case of expected negative Chern scalar curvature}

We now consider the case \( \Gamma_X(\{\omega\}) < 0 \), meaning that the expected constant Chern scalar curvature is negative. In this setting, we apply conformal techniques and standard elliptic theory, using the continuity method, to prove the existence of a unique solution.

First of all, by exploiting conformal arguments, we show that we can choose a reference metric \( \omega \in \{\eta\} \) such that \( s^{Ch}(\omega) < 0 \) everywhere. Indeed, as before, we can solve the linear pde \( \Delta^{Ch} f = -s^{Ch}(\eta) + \int_X s^{Ch}(\eta)\, \eta^n \). Then the conformal metric \( \exp\left(\sfrac{2f}{n}\right) \eta \) has Chern scalar curvature \( s^{Ch}\left( \exp\left( \sfrac{2f}{n} \right) \eta \right) = \exp\left( -\sfrac{2f}{n} \right) \Gamma_X(\{\omega\}) < 0 \).
From now on, we fix this metric as our reference representative in \(\{\omega\}\), and all geometric quantities will be understood with respect to it.

We then apply the continuity method to establish the existence of a metric in the conformal class \(\{\omega\}\) with constant Chern scalar curvature, initially in the regularity class \( \mathcal{C}^{2,\alpha} \). Smoothness of the solution then follows from standard elliptic regularity theory.

We consider the following continuity path, where \( \lambda := \Gamma_X(\{\omega\}) \):
\[
{\rm ChYa} \colon [0,1] \times \mathcal{C}^{2,\alpha}(X, \mathds{R}) \to \mathcal{C}^{0,\alpha}(X, \mathds{R}),
\]
\[
{\rm ChYa}(t, f) := \Delta^{Ch} f + t\, s^{Ch}(\omega) - \lambda \exp\left( \sfrac{2f}{n} \right) + \lambda\, (1 - t) .
\]
Let us define
\[
T := \left\{ t \in [0,1] : \exists f_t \in \mathcal{C}^{2,\alpha}(X, \mathds{R}) \text{ such that } {\rm ChYa}(t, f_t) = 0 \right\} .
\]
Clearly, $T$ is non-empty, since ${\rm ChYa}(0, 0) = 0$.

We prove that \(T\) is open. The implicit function theorem ensures that \(T\) is open provided that the linearization of \(\mathrm{ChYa}\) with respect to the second variable is bijective. This linearization at $(t_0,f_{t_0})$ is given by the operator
\[ D \colon \mathcal{C}^{2,\alpha}(X, \mathds{R}) \to \mathcal{C}^{0,\alpha}(X, \mathds{R}), \]
\[ Dv = \Delta^{\mathrm{Ch}} v - \sfrac{2\lambda}{n} \exp\left(\sfrac{2 f_{t_0}}{n}\right) \cdot v. \]
Let \(v \in \ker D\). Since \(D\) is an elliptic operator, standard regularity theory allows us to apply the classical maximum principle. At a maximum point \(p\) of \(v\), we have
\[
- \sfrac{2\lambda}{n} \exp\left(\sfrac{2 f_{t_0}(p)}{n}\right) \cdot v(p) \leq 0,
\]
which implies \(v(p) \leq 0\). Similarly, at a minimum point \(q\), one obtains \(v(q) \geq 0\). Therefore, \(v\) must be identically zero, {\itshape i.e.}, \(\ker D = \{0\}\).
To prove that \(D\) is surjective, we observe that the index of the linear operator \(D\) coincides with the index of the Laplacian, since \(D\) differs from the Laplacian by a compact operator. Therefore, the injectivity of \(D\) directly implies its surjectivity.

Finally, we prove that \(T\) is closed. The argument relies on the following uniform \(L^\infty\) estimates for the solutions. Let \((t_n)_n \subseteq T\) be a sequence in \(T\) converging to \(t_\infty\), and let \(f_{t_n} \in \mathcal{C}^{2,\alpha}(X, \mathds{R})\) be such that \( \mathrm{ChYa}(t_n, f_{t_n}) = 0 \).
We want to show that there exists a uniform constant \(C > 0\), depending only on \(X\), \(\omega\), and \(\lambda\), such that for all \(n\),
\[
\|f_{t_n}\|_{L^\infty} \leq C.
\]
This follows again from the maximum principle. Indeed, evaluating the equation \( \mathrm{ChYa}(t_n, f_{t_n}) = 0 \) at a point \(p \in X\) where \(f_{t_n}\) attains its maximum, we obtain
\[
-\lambda \exp\left( \sfrac{2 f_{t_n}(p)}{n} \right) \leq -t_n\, s^{\mathrm{Ch}}(\omega)(p) - \lambda (1 - t_n) \leq -\max_X s^{\mathrm{Ch}}(\omega) - \lambda,
\]
where we used the fact that \(s^{\mathrm{Ch}}(\omega) < 0\) everywhere on \(X\).
Similarly, evaluating the equation at a point \(q \in X\) where \(f_{t_n}\) attains its minimum, we obtain
\[
- \lambda \exp\left( \sfrac{2 f_{t_n}(q)}{n} \right) \geq \min\left\{ \min_X (-s^{\mathrm{Ch}}(\omega)),\ -\lambda \right\} > 0,
\]
which yields a uniform lower bound for \(f_{t_n}\) and thus completes the proof of the claim.

Consider now the equation
\[ \Delta^{\mathrm{Ch}} f_{t_n} + t_n\, s^{\mathrm{Ch}}(\omega) + \lambda(1 - t_n) = \lambda \exp\left( \sfrac{2 f_{t_n}}{n} \right), \]
where the left-hand side can be interpreted as an elliptic operator acting on \(f_{t_n}\). The uniform \(L^\infty\) estimates obtained above imply that the right-hand side is uniformly bounded in \(L^\infty\). By iterating the Calderón–Zygmund inequality and applying the Sobolev embedding theorem, it follows that \(f_{t_n}\) is uniformly bounded in \(\mathcal{C}^3(X, \mathds{R})\). Then, by the Ascoli–Arzelà theorem, there exists a subsequence converging in \(\mathcal{C}^{2,\alpha}(X)\), whose limit \(f_\infty\) solves \(\mathrm{ChYa}(t_\infty, f_\infty) = 0\).

The arguments above ensure that \(T = [0,1]\). In particular, the equation \(\mathrm{ChYa}(1, f) = 0\), corresponding to \eqref{eq:ChYa}, admits a solution \(f \in \mathcal{C}^{2,\alpha}(X, \mathds{R})\). It remains to prove that this solution is in fact smooth. This follows from a standard bootstrap argument based on Schauder estimates.

It remains to prove uniqueness. Suppose that \( \omega_1 = \exp\left(\sfrac{2f_1}{n}\right) \omega \in \{\omega\} \) and \( \omega_2 = \exp\left(\sfrac{2f_2}{n}\right) \omega \in \{\omega\} \) are two metrics with constant Chern scalar curvature \( \lambda_1 < 0 \) and \( \lambda_2 < 0 \), respectively.
Consider the difference \( f_1 - f_2 \), which satisfies the equation
\[
\Delta^{\mathrm{Ch}}(f_1 - f_2) = \lambda_1 \exp\left( \sfrac{2f_1}{n} \right) - \lambda_2 \exp\left( \sfrac{2f_2}{n} \right).
\]
Evaluating this equation at a maximum point and at a minimum point of \( f_1 - f_2 \), we deduce
\[
f_1 - f_2 = \frac{n}{2} \log\left( \sfrac{\lambda_2}{\lambda_1} \right).
\]
Therefore, the functions differ by an additive constant. Imposing the chosen normalization condition, we conclude that \( f_1 = f_2 \), proving uniqueness.

\subsection{The case of expected positive Chern scalar curvature}

In the previous arguments, the non-positivity of the curvature played a fundamental role in the application of the maximum principle. For this reason, the last case, \( \Gamma_X(\{\omega\}) > 0 \), appears to be more challenging, and it remains an open problem whether the moduli space \(\mathcal{C}h\mathcal{Y}a(X, \{\omega\})\) is non-empty. We collect here some preliminary remarks concerning this situation.

An important example is the Hopf surface, whose classical Hermitian metric has the remarkable property that the second Chern-Ricci form satisfies \( \mathrm{Ric}^{(2)} = 2\omega \), see \cite{gauduchon-ivanov}, then the Chern scalar curvature is constant and equal to \(4\). Explicit computations can be found in \cite{angella-calamai-spotti-2}. Further examples can be constructed via an implicit function theorem argument, starting from a metric whose Chern scalar curvature is sufficiently small in the \( \mathcal{C}^{0,\alpha} \)-norm.

Differently from the classical Yamabe problem, it is important to note that the Chern–Yamabe problem is not variational in general: the Chern–Yamabe equation can be interpreted as the Euler–Lagrange equation of an associated functional if and only if the conformal class contains a balanced representative. However, it remains unclear whether this functional is bounded from below.

In the positive case, uniqueness of constant Chern scalar curvature metrics within a fixed conformal class generally fails. This can be shown by adapting the argument in \cite{delima-piccione-zedda} and applying a version of the Krasnosel’skii Bifurcation Theorem. Nevertheless, we still expect compactness of the moduli space \(\mathcal{C}h\mathcal{Y}a(X, \{\omega\})\), provided the volumes are uniformly bounded.

As an additional strategy to address the existence problem, we introduced the Chern–Yamabe flow. Further developments and contributions in this direction have been made in~\cite{lejmi-maalaoui, calamai-zou, yu}.

\subsection{Moment map picture}

Constant scalar curvature metrics play a central role in Kähler geometry, as the scalar curvature naturally arises as a moment map for an infinite-dimensional Hamiltonian action \cite{fujiki, donaldson}. More precisely, the problem of finding constant scalar curvature Kähler metrics in a fixed Kähler class can be reinterpreted as the search for a zero of the moment map within the orbit of the complex structure under the complexified group action. Inspired by the Hilbert-Mumford criterion in finite-dimensional Geometric Invariant Theory, this perspective leads to the notion of \(K\)-stability, see {\itshape e.g.} \cite{szekelyhidi}.

We briefly recall the geometric setting of the Fujiki-Donaldson picture in K\"ahler geometry. Let \(X\) be a differentiable manifold endowed with a symplectic form \(\omega\) admitting Kähler structures. The space \(\mathcal{J}_{\rm alm}(\omega)\) of \(\omega\)-compatible almost-complex structures carries a natural structure of infinite-dimensional Kähler manifold \((\mathbb J, \mathbb\Omega)\), invariant under the action of the symplectomorphism group \({\rm Aut}(X, \omega)\). This action preserves the analytic subset \(\mathcal{J}(\omega)\) of integrable ({\itshape i.e.} K\"ahler) structures.
The map \( s \colon \mathcal{J}_{\mathrm{alm}}(\omega) \to \mathcal{C}^{\infty}(X, \mathds{R}) \), which assigns to each \(J\) the scalar curvature of the metric \(g := \omega(\_, J \_)\), is smooth, \({\rm Aut}(X, \omega)\)-equivariant, and satisfies the moment map equation
\[
\int_M {\rm d}(\mathrm{scal})|_J(v) \, h_Y \, \omega^n = -\tfrac{1}{2} \, \mathbb\Omega_J(Y^*_J, v),
\]
for all \(J \in \mathcal{J}(\omega)\), \(v \in T_J \mathcal{J}_{\mathrm{alm}}(\omega)\), and \(Y \in \mathfrak{ham}(X, \omega)\). Here, \(\mathfrak{ham}(X, \omega)\) denotes the Lie subalgebra of infinitesimal automorphisms \(Y\) satisfying \(Y \lrcorner \omega = \mathrm{d} h_Y\) for some smooth function \(h_Y \in \mathcal{C}^\infty(M, \mathds{R})\), unique up to an additive constant. The fundamental vector field associated to \(Y\) is denoted by \(Y^*\). 
Hence, \(s\) is called a \emph{moment map} for the action of the Hamiltonian diffeomorphism group \({\rm Ham}(M, \omega)\) on \(\mathcal{J}(\omega)\).

Motivated by the desire to gain a deeper understanding of the role played by the Chern scalar curvature in non-Kähler geometry, in \cite{angella-calamai-pediconi-spotti} we extended the classical Donaldson–Fujiki framework to the broader setting of locally conformally Kähler geometry.
In this context, we consider special conformal vector fields, satisfying \( d_\theta(Y \lrcorner\, \omega) = 0 \), and \emph{twisted Hamiltonian vector fields}, that is, vector fields \( Y \) for which \( Y \lrcorner\, \omega = d_\theta h \), where the twisted differential is defined as \( d_\theta := d - \theta \).
A further technical subtlety arises from the fact that the automorphism group \( {\rm Aut}^*(X, \{\omega\}) \) preserves only the conformal class \( \{\omega\} \), so that the Riemannian scalar curvature is not \( {\rm Aut}^*(X, \{\omega\}) \)-equivariant. Additionally, an extra term involving \( D(\theta^{\sharp}) \) appears in the computations, which we do not know how to handle. These difficulties force us to introduce an additional symmetry assumption: we restrict to the subspace of structures invariant under the torus \( T \), defined as the closure of the flow generated by \( \theta^{\sharp_\omega} \), the symplectic dual of the Lee form.
Our main result can then be stated as follows.

\begin{theorem}[{\cite{angella-calamai-pediconi-spotti}}]
Let \( X \) be a compact holomorphic manifold of complex dimension \( n \), endowed with a locally conformally Kähler structure \( \omega \) with Lee form \( \theta \). 
The action of \( T \)-invariant twisted Hamiltonian diffeomorphisms on the space \( \mathcal{J}(\omega)^T \) of \( T \)-invariant compatible complex structures is Hamiltonian, with moment map
\[
\mu = s^{\mathrm{Ch}}(\omega) + n\, d^*\theta.
\]
That is, for every \( J \in \mathcal{J}(\omega)^T \), \( A \in T_J \mathcal{J}(\omega)^T \), and twisted Hamiltonian vector field \( Y \) with twisted Hamiltonian potential \( h \) and fundamental vector field \( Y_J^* \), the moment map equation holds:
\[
\int_X \left. d\mu \right|_J(A)\, h\, \omega^n = -\frac{1}{2} \, \mathbb\Omega_J(Y_J^*, A) .
\]
\end{theorem}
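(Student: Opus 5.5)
The plan is to reduce to the Kähler Donaldson–Fujiki computation on the minimal Kähler cover, keeping track of the corrections coming from the Lee form. Locally $\omega = e^{\phi}\omega_K$ with $\omega_K$ Kähler and $d\phi=\theta$. Twisted Hamiltonian vector fields $Y\lrcorner\omega = d_\theta h$ then correspond to genuine Hamiltonian vector fields for $\omega_K$, with potential $e^{-\phi}h$. The variation of $J$ inside $\mathcal{J}(\omega)$ is the same whether we use $\omega$ or $\omega_K$, since compatibility is conformally invariant. The $T$-invariance will make the averaging along the flow of $\theta^\sharp$ harmless and kill the uncontrolled $D(\theta^\sharp)$ terms.

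First, compute the variation of the Chern scalar curvature along a curve $J_t$ with $\dot J = A$, $A$ anticommuting with $J$ and $\omega$-symmetric. I would do this via $\mathrm{Ric}^{(1)} = -\sqrt{-1}\partial\overline\partial\log\det h$, i.e. via the Chern connection on the anticanonical bundle. The variation of $\mathrm{Ric}^{(1)}$ is $d\big(J\, d^*_\omega A\big)$-type, modulo Lee-form terms, exactly as in the Kähler case. Tracing with $\omega$ gives $d s^{Ch}(A) = \mathrm{tr}_\omega\, d(\cdots) + (\text{terms linear in } \theta \text{ and } \nabla A)$. Separately, $d^*\theta$ depends on $J$ through the metric $g=\omega(\_,J\_)$, so its variation contributes terms of the form $\langle A, D\theta^\flat\rangle$. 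The claim to verify is that the Lee-form error terms in $d s^{Ch}(A)$ are exactly cancelled by $n\,d(d^*\theta)(A)$. This is what singles out the combination $\mu = s^{Ch} + n\,d^*\theta$.

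Second, pair with $h$ and integrate by parts, using $\omega^n$ and the twisted differential. Integrating $h\, d\mu|_J(A)\,\omega^n$ by parts twice should produce $\int_X \langle A, J\mathcal{L}_{Y}J\rangle$-type expressions. Here I use $Y\lrcorner\omega = d_\theta h$ and $d_\theta$-Stokes: $\int d_\theta\alpha\wedge\omega^{n-1}$ picks up $\theta$ terms because $d\omega^{n-1} = (n-1)\theta\wedge\omega^{n-1}$. The fundamental vector field is $Y^*_J = \mathcal{L}_Y J$, and $\mathbb\Omega_J(Y^*_J,A) = \tfrac12\int \mathrm{tr}(J Y^*_J A)\,\omega^n$. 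Matching the two sides gives the identity with the factor $-\tfrac12$, exactly as in the Kähler proof. I would work equivalently on the universal Kähler cover with $\tilde\omega$. There, $\omega^n$ differs from $\tilde\omega^n$ by $e^{n\phi}$, and $h$ becomes $e^{-\phi}h$, so integrands are equivariant and descend.

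The main obstacle is the bookkeeping in the first step: the appearance of $D(\theta^\sharp)$, i.e. the $J$-dependence of $\theta^\sharp$ through $g$. I would first try to show that, for $A$ tangent to $\mathcal{J}(\omega)^T$ (so $\mathcal{L}_{\theta^\sharp}A=0$), these terms assemble into $\mathcal{L}_{\theta^\sharp}$ or $\mathcal{L}_{J\theta^\sharp}$ of something. They would then integrate to zero against $h$, which is $T$-invariant once we average over $T$, since $T$ is compact as the closure of a flow of isometries. Integrability of $J$ is used only to write $\mathrm{Ric}^{(1)}$ as a Chern–Ricci form; the remaining computation is algebraic.
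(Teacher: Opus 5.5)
The decisive identity is announced rather than derived, and the two shortcuts you propose for it do not work.

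\textbf{The Kähler cover.} It cannot carry the integrated Donaldson--Fujiki identity down to $X$. If a deck transformation satisfies $\gamma^*\omega_K=\lambda_\gamma\omega_K$, the Kähler integrand $ds_K(A)\,(e^{-\phi}h)\,\omega_K^n$ gets multiplied by $\lambda_\gamma^{\,n}\neq 1$, so it does not descend. What descends is $h\,d\mu|_J(A)\,\omega^n$, which is the Kähler integrand weighted by $e^{n\phi}$ plus the variation of the conformal-change terms. The Kähler identity rests on Stokes for $\omega_K^n$ on a compact manifold, so it says nothing about this weighted integral.

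Only pointwise formulas transfer. The Chern--Ricci form depends only on $J$ and the volume form, so $\mathrm{Ric}^{(1)}(\omega)=\mathrm{Ric}(\omega_K)-n\sqrt{-1}\partial\overline\partial\phi$. Both terms are global: the first because the Kähler Ricci form is homothety invariant, the second because $\overline\partial\phi=\theta^{0,1}$. Hence its variation is the Kähler one plus an exact term linear in $\theta\circ A$.

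Every integration by parts must then be redone on $X$ against $\omega^n$, where $d(h\,\omega^{n-1})=(d_\theta h+n\,h\,\theta)\wedge\omega^{n-1}$. Moreover, since $\omega^n$ is independent of $J$ and $\theta^{\sharp_{g_J}}=J\theta^{\sharp_\omega}$, you get $\tfrac{d}{dt}\,d^*\theta=-\mathrm{div}_{\omega^n}(A\,\theta^{\sharp_\omega})$. Showing that these Lee-form pieces recombine into $-\frac12\mathbb\Omega_J(Y^*_J,A)$, after a further integration by parts moving derivatives off $A$, is the whole content of the theorem. ``Exactly as in the Kähler proof'' is precisely what fails, and your text only asserts the outcome (``the claim to verify'', ``should produce''). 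Nor is the remaining computation ``algebraic''.

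\textbf{The role of $T$.} The paper points to two separate problems, and your mechanism handles neither.

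(a) For a twisted Hamiltonian $Y$ one has $\mathcal{L}_Y\omega=\theta(Y)\,\omega$ with $\theta(Y)=-\theta^{\sharp_\omega}(h)$. Only for $T$-invariant $h$ does the flow preserve $\omega$, hence $\omega^n$ and $\mathbb\Omega$. Only then are $s^{Ch}$ and $d^*\theta$ equivariant; otherwise the flow merely preserves the conformal class. This is what ``Hamiltonian'' in the statement requires, and your plan never checks it.

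(b) By counting derivatives, any term containing $D\theta^\sharp$ must have the form $\int_X h\,g_J(A,D\theta^\sharp)\,\omega^n$, with $h$ and $A$ undifferentiated. It is not a $\theta^\sharp$-derivative of anything, and integrating against a $T$-invariant $h$ does not kill it. Since $g_J(A\cdot,\cdot)$ is symmetric and $J$-anti-invariant, the term only sees the $J$-anti-invariant part of $\mathcal{L}_{\theta^\sharp}g_J$. What kills it is a pointwise fact about $J$, not about $A$ or $h$:
\begin{itemize}
\item $\mathcal{L}_{\theta^{\sharp_\omega}}\omega=d\theta+\theta(\theta^{\sharp_\omega})\,\omega=0$ gives $\mathcal{L}_{\theta^{\sharp_\omega}}g_J=\omega(\cdot,(\mathcal{L}_{\theta^{\sharp_\omega}}J)\,\cdot)$.
\item Hence for $J\in\mathcal{J}(\omega)^T$ the field $\theta^{\sharp_\omega}$ is $g_J$-Killing, and the pairing vanishes identically.
\item For the metric dual $J\theta^{\sharp_\omega}$ you additionally need integrability, via $\mathcal{L}_{JV}J=J\,\mathcal{L}_VJ$, to make it holomorphic.
\end{itemize}
So integrability is used beyond writing $\mathrm{Ric}^{(1)}$ locally, contrary to your last sentence.
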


These considerations suggest that, rather than requiring constant Chern scalar curvature, a more natural notion of canonical metric might be characterized by the constancy of the moment map \( \mu \). This class includes, as special cases, both constant scalar curvature Kähler metrics and Gauduchon metrics with constant scalar curvature. See \cite{angella-pediconi-scarpa-spotti-windare} for further details.

\ifdefined\frommain
\else
    \bibliographystyle{alpha}
    \bibliography{biblio}
\fi

\subsection{Problems}

\begin{exercise}
Let \( X \) be a holomorphic manifold endowed with a Hermitian metric \( \omega \), and let \( f \in \mathcal{C}^\infty(X, \mathds{R}) \) be a smooth real-valued function.
Prove the following formula for the conformal change of the Chern scalar curvature:
\[
s^{Ch}\left(e^{\sfrac{2f}{n}}\omega\right) = e^{-\sfrac{2f}{n}} \left( s^{Ch}(\omega) + \Delta^{Ch}_\omega f \right),
\]
where \( \Delta^{Ch}_\omega f := 2\, \mathrm{tr}_\omega(\sqrt{-1}\,\partial\overline{\partial} f) \) is the Chern Laplacian of \( f \) with respect to the metric \( \omega \).
\end{exercise}

\begin{exercise}[{\cite{angella-calamai-spotti-2}}]
Let \( X \) be a compact holomorphic manifold, and let \( \omega \) be a \emph{(weak) first Chern-Einstein} Hermitian metric on \( X \), namely
\[
\mathrm{Ric}^{(1)}(\omega) = \lambda\, \omega
\]
for some smooth function \( \lambda \neq 0 \). Show that \( \omega \) is conformal to a Kähler metric.
\end{exercise}

\begin{proof}[Hint]
Consider the unique Gauduchon metric $\eta=e^f\omega$ with total volume one in the conformal class of $\omega$ and prove that this metric is K\"ahler. To do so, take the function $\tilde\lambda=\lambda e^{-f}$ and notice that the first Chern-Einstein condition implies $\partial(\tilde\lambda\eta)=0$. To prove that $\eta$ is K\"ahler it is then enough to prove that $\tilde\lambda\neq 0$ is constant. The argument now reduces to show that the form $\tilde\lambda^{n-1}\sqrt{-1}\partial\overline\partial\tilde\lambda^{n-1}\wedge\eta^{n-1}$ is $d$-exact and show that the following equalities hold
$$
0=\int_X \tilde\lambda^{n-1}\sqrt{-1}\partial\overline\partial\tilde\lambda^{n-1}\wedge\eta^{n-1}=
\frac{1}{2n}\int_X |d\tilde\lambda^{n-1}|^2\eta^n\,.
$$
\end{proof}

\begin{exercise}[{non-Kähler Calabi–Yau, \cite{tosatti}}]
Let \( X \) be a compact holomorphic $n$-dimensional manifold with \( c_1^{\mathrm{BC}}(X) = 0 \). Show that, for any Hermitian metric \( \omega \), there exists a conformal representative \( \omega' = e^f \omega \), for some smooth function \( f \), such that
\[
\mathrm{Ric}^{(1)}(\omega') = 0.
\]
The function \( f \) is unique up to an additive constant.
\end{exercise}

\begin{proof}[Hint]
Since $c_1^{BC}(X)=0$, then $\mathrm{Ric}^{(1)}(\omega)=\sqrt{-1}\partial\overline{\partial}f$ for some smooth function $f$. Show that $\omega'=e^{\frac{f}{n}}\omega$ is first Chern-Ricci flat. The uniqueness up to constants comes from the uniqueness up to constants of the Ricci potential.    
\end{proof}

\begin{proof}[Remark]
By \cite{tosatti-weinkove}, the solution can also be expressed as \( \omega' = \omega + \sqrt{-1} \, \partial \overline{\partial} \varphi \) for some smooth function \( \varphi \), unique up to an additive constant.
\end{proof}

\begin{exercise}[{\cite{gauduchon-cras-2, angella-calamai-spotti-2}}]
Let \( X \) be a compact holomorphic manifold endowed with a Hermitian metric \( \omega \) such that 
\[
\mathrm{Ric}^{(2)}(\omega) = \lambda \omega
\]
for some smooth function \( \lambda \). 
\begin{itemize}
\item Show that the property is invariant for conformal changes.
\item Show that, up to a conformal change, one can assume \( \lambda \) to have constant sign, equal to the sign of the Gauduchon degree of the conformal class \( \{ \omega \} \).
\item Assuming a positive solution to the Chern–Yamabe problem, prove that one can choose a representative in the conformal class with \( \lambda \) constant.
\end{itemize}
\end{exercise}

\begin{proof}[Hint]
Show that, under a conformal change, the second Chern-Ricci curvature transforms as \( \mathrm{Ric}^{(2)}(e^f\, \omega) = \mathrm{Ric}^{(2)}(\omega) + \frac{1}{2}(\Delta^{\mathrm{Ch}}_\omega f)\, \omega \).
It then follows that one can use a conformal change to let the Einstein factor have definite sign. To do so, choose the unique Gauduchon metric $\eta$ in the conformal class of $\omega$, with total volume one, and show that the following equation has a smooth solution $f\in\mathcal{C}^\infty(X,\mathbb{R})$,
$$
\Delta^{Ch}f=-s^{Ch}(\eta)+\int_X s^{Ch}(\eta)d\mu_{\eta}\,.
$$
Then the metric $e^{\frac{2f}{n}}\eta$ is second Chern-Einstein and its Einstein factor has definite sign, depending on the sign of its Chern scalar curvature, that equals the sign of the Gauduchon degree of the conformal class \( \{ \omega \} \).
Finally, show that the problem of finding second Chern-Einstein metrics in \( \{ \omega \} \) with $\lambda$ constant reduces to solve the Liouville type equation
$$
\Delta^{Ch}_\eta f+s^{Ch}(\eta)=\lambda e^{\frac{2f}{n}}
$$
with $\eta$ Gauduchon metric in \( \{ \omega \} \).
\end{proof}

\begin{exercise}[{\cite{gauduchon-ivanov}}]
Prove that the only compact complex non-Kähler surface admitting a Hermitian metric \( \omega \) such that \( \mathrm{Ric}^{(2)}(\omega) = f\, \omega \) for some smooth function \( f \), is the Hopf surface.
\end{exercise}

\begin{proof}[Remark]
On compact holomorphic manifolds, the existence of Hermitian metrics satisfying \( \mathrm{Ric}^{(2)}(\omega) = f\, \omega \) for some smooth function \( f \) is subject to several obstructions, see \cite{teleman, gauduchon-cras-2, gauduchon-bsmf, yang}. In particular:
the sign of the Einstein factor \( f \) is constrained by the pseudo-effectiveness or unitary flatness of \( K_X \) and \( K_X^{-1} \);
second-Chern-Ricci-flat metrics can only exist if \( \mathrm{Kod}(X) \leq 0 \);
manifolds having second-Chern-Einstein metrics with positive (respectively, negative) curvature do not admit non-trivial holomorphic \( p \)-forms (respectively, holomorphic \( p \)-vector fields), for any \( p \geq 1 \).
Moreover, such metrics are weakly \( g \)-Hermitian–Einstein, see {\itshape e.g.} \cite{kobayashi-Nagoya, lubke-teleman-book, kobayashi-book}, and thus satisfy further geometric constraints:
the Bogomolov–Lübke inequality holds \cite{bogomolov, lubke};
by the Kobayashi–Hitchin correspondence, the holomorphic tangent bundle is \( g \)-semi-stable \cite{kobayashi, lubke-2}.
\end{proof}

\ifdefined\frommain
\else
    \bibliographystyle{alpha}
    \bibliography{biblio}
\fi

\cleardoublepage

\section{Geometric flows in non-K\"ahler geometry}\label{sec:flows}

In this section, we present the basic theory of the Chern-Ricci flow \cite{gill, tosatti-weinkove-CRF, tosatti-weinkove-surveyCRF}, with a focus on its applications to the study of compact complex surfaces \cite{tosatti-weinkove-surfacesCRF}. In particular, we discuss the long-time behavior of the flow on Inoue surfaces, following \cite{fang-tosatti-weinkove-zheng, angella-tosatti}.

\subsection{Geometric flows of Hermitian metrics}

Over the past few decades, the Ricci flow, originally introduced by Hamilton \cite{hamilton}, has emerged as a powerful tool in differential geometry. It played a central role in the resolution of the Poincaré and Geometrization conjectures, through the groundbreaking work of Perelman \cite{perelman-1, perelman-2, perelman-3}.

In the Hermitian setting, when the initial metric is Kähler, the flow preserves the Kähler condition \cite{hamilton, shi}; in this case, it is referred to as the \emph{Kähler-Ricci flow}.
In his seminal work, Cao \cite{cao} reproved the Aubin-Yau theorem using the Kähler-Ricci flow by showing that, on a compact Kähler manifold \( X \) with \( c_1(X) = 0 \) (respectively, \( c_1(X) < 0 \)), the Kähler-Ricci flow (respectively, the normalized K\"ahler-Ricci flow) starting from any Kähler metric \( \omega_0 \) converges smoothly to the unique Kähler-Einstein metric in the class \( [\omega_0] \) (respectively, in the class \( -c_1(X) \)).

On the other hand, if the initial metric is just Hermitian, the Ricci flow generally fails to preserve Hermitianity, except when the curvature of its Levi-Civita connection satisfies all Kähler symmetries, see \cite{angella-otal-ugarte-villacampa}. To address this issue, various geometric flows have been introduced to deal with the Hermitian non-K\"ahler context. These include the \emph{Chern-Ricci flow} \cite{gill, tosatti-weinkove-CRF}, the family of \emph{Hermitian curvature flows} introduced by Streets and Tian \cite{streets-tian}, such as the \emph{pluriclosed flow} \cite{streets-tian-IMRN} and the flow studied by Ustinovskiy \cite{ustinovskiy}, the \emph{anomaly flows} \cite{fei-phong-picard-zhang-CambJMath, phong-picard-zhang-JDG, phong-picard-zhang-MathZ, phong-picard-zhang-CAG, picard-LNM}, the \emph{generalized Ricci flow} \cite{garciafernandez-streets}, flows coupling the Ricci flow with a flow evolving the complex structure, and many others.

\subsection{Introduction to the Chern-Ricci flow}

In this section, we focus on the \emph{Chern-Ricci flow}. It was introduced by Gill \cite{gill} in the study of compact non-Kähler Calabi–Yau manifolds, that is, compact holomorphic manifolds \( X \) with vanishing first Bott–Chern class \( c_1^{BC}(X) = 0 \). Then it was later studied extensively by Tosatti and Weinkove \cite{tosatti-weinkove-CRF, tosatti-weinkove-surfacesCRF, tosatti-weinkove-surveyCRF}. One expects the behaviour of the Chern-Ricci flow to reflect the underlying holomorphic structure of \( X \), making it a powerful tool in the classification theory of compact non-Kähler holomorphic surfaces. A key feature of the Chern-Ricci flow is that, like the Kähler-Ricci flow, it reduces to a scalar parabolic equation, and thus inherits many interesting analytic properties.

Let \( X \) be a compact holomorphic manifold, of complex dimension \( n \), and let \( \omega_0 \) be a fixed Hermitian metric. The \emph{Chern-Ricci flow} evolves the initial metric according to the equation
\begin{equation}\tag{CRF}\label{eq:crf}
\frac{\partial \omega(t)}{\partial t} = -\mathrm{Ric}^{Ch}(\omega(t)), \quad \omega(0) = \omega_0,
\end{equation}
for \( 0 \leq t < T \), where \( 0 < T \leq +\infty \) will denote the maximal existence time, see below.
Here, \( \mathrm{Ric}^{Ch} \stackrel{\mathrm{loc}}{=} -\sqrt{-1} \, \partial \overline{\partial} \log \det \omega \) denotes the first Chern-Ricci curvature, that is, the tensor \( \mathrm{Ric}^{(1)} \) associated to the Chern connection.

We can reduce the Chern-Ricci flow to a parabolic complex Monge-Ampère equation.

\begin{theorem}[{\cite{gill, tosatti-weinkove-CRF}}]
Let \( X \) be a compact holomorphic manifold endowed with a Hermitian metric \( \omega_0 \). Then the Chern-Ricci flow \eqref{eq:crf} admits a solution \( (\omega(t))_{0 \leq t < T} \) if and only if there exists a family of smooth functions \( (\varphi(t))_{0 \leq t < T} \) solving the following parabolic equation for scalar functions:
\begin{equation}\tag{P$\mathds C$MA}\label{eq:pcma}
\left\{
\begin{array}{l}
\frac{\partial \varphi}{\partial t} = \log \frac{(\omega_0 - t\, \mathrm{Ric}^{Ch}(\omega_0) + \sqrt{-1} \partial \overline{\partial} \varphi)^n}{\omega_0^n}, \\[1.2ex]
\omega_0 - t\, \mathrm{Ric}^{Ch}(\omega_0) + \sqrt{-1} \partial \overline{\partial} \varphi > 0, \\[0.5ex]
\varphi(0) = 0.
\end{array}
\right.
\end{equation}
Moreover, the two solutions are related by
\[
\omega(t) = \omega_0 - t\, \mathrm{Ric}^{Ch}(\omega_0) + \sqrt{-1} \partial \overline{\partial} \varphi(t).
\]
\end{theorem}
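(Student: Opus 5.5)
The plan is to verify the two directions directly, using the local expression $\mathrm{Ric}^{Ch}(\omega) = -\sqrt{-1}\,\partial\overline\partial \log \det \omega$. The key observation is that the difference of two Chern-Ricci forms is globally $\partial\overline\partial$-exact. For Hermitian metrics $\omega,\omega_0$, the quotient $\omega^n/\omega_0^n$ is a globally defined positive smooth function. Hence
\[
\mathrm{Ric}^{Ch}(\omega) - \mathrm{Ric}^{Ch}(\omega_0) = -\sqrt{-1}\,\partial\overline\partial \log \frac{\omega^n}{\omega_0^n},
\]
since in any chart $\log\det\omega - \log\det\omega_0 = \log(\omega^n/\omega_0^n)$. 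This identity is used in both directions.

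\emph{From \eqref{eq:pcma} to \eqref{eq:crf}.} Given $\varphi(t)$ solving \eqref{eq:pcma}, I will set $\omega(t) := \omega_0 - t\,\mathrm{Ric}^{Ch}(\omega_0) + \sqrt{-1}\partial\overline\partial\varphi(t)$. This is a Hermitian metric by the positivity condition, and $\omega(0)=\omega_0$ because $\varphi(0)=0$. Differentiating in $t$ and substituting the first equation of \eqref{eq:pcma} gives
\[
\partial_t\omega = -\mathrm{Ric}^{Ch}(\omega_0) + \sqrt{-1}\partial\overline\partial \log\frac{\omega(t)^n}{\omega_0^n}.
\]
By the identity above, the right-hand side equals $-\mathrm{Ric}^{Ch}(\omega(t))$. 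So $\omega(t)$ solves \eqref{eq:crf}.

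\emph{From \eqref{eq:crf} to \eqref{eq:pcma}.} Given a solution $\omega(t)$ of \eqref{eq:crf}, I will define
\[
\varphi(t) := \int_0^t \log\frac{\omega(s)^n}{\omega_0^n}\,ds,
\]
which is smooth with $\varphi(0)=0$. Using the identity again,
\[
\partial_t\bigl(\omega(t) - \omega_0 + t\,\mathrm{Ric}^{Ch}(\omega_0) - \sqrt{-1}\partial\overline\partial\varphi\bigr) = -\mathrm{Ric}^{Ch}(\omega(t)) + \mathrm{Ric}^{Ch}(\omega_0) - \sqrt{-1}\partial\overline\partial\log\frac{\omega(t)^n}{\omega_0^n} = 0 .
\]
This expression vanishes at $t=0$, so $\omega(t) = \omega_0 - t\,\mathrm{Ric}^{Ch}(\omega_0) + \sqrt{-1}\partial\overline\partial\varphi$ for all $t$. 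Positivity is then automatic, and $\partial_t\varphi = \log(\omega(t)^n/\omega_0^n)$ is precisely the first equation of \eqref{eq:pcma}.

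The only delicate point is making sure the $\partial\overline\partial$-potential is a global function rather than a chart-dependent one. This is exactly why I use the ratio of volume forms rather than $\log\det$ in a chart. I also need smooth dependence on $t$, so that differentiation under $\partial\overline\partial$ and the integral defining $\varphi$ are justified. Beyond that, everything is a formal computation.
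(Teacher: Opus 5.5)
Your proof is correct, and it is the standard argument behind this result: the paper states the theorem without proof and cites Gill and Tosatti--Weinkove, who argue the same way, usually with a fixed smooth volume form $\Omega$ in place of $\omega_0^n$. Both directions rest on the global identity $\mathrm{Ric}^{Ch}(\omega)-\mathrm{Ric}^{Ch}(\omega_0)=-\sqrt{-1}\,\partial\overline\partial\log(\omega^n/\omega_0^n)$, and in the converse direction $\varphi(t)=\int_0^t\log(\omega(s)^n/\omega_0^n)\,ds$ recovers the potential, exactly as you do.
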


By standard parabolic theory, assuming \( X \) is compact, the Chern-Ricci flow admits a unique solution on a maximal time interval \( [0, T) \), for some \( 0 < T \leq +\infty \). The maximal existence time is characterized by \cite{tosatti-weinkove-CRF} as
\[
T = \sup \left\{ t > 0 : \exists \psi \in \mathcal{C}^\infty(X, \mathds{R}) \text{ such that } \omega_0 - t\, \mathrm{Ric}^{Ch}(\omega_0) + \sqrt{-1} \partial \overline{\partial} \psi > 0 \right\}.
\]
This parallels the result in the Kähler case, the only difference being the presence of torsion terms, which complicate the analytic estimates.
In particular, note that the condition \( T = +\infty \) is independent of the initial metric \( \omega_0 \), and it holds if and only if the canonical bundle \( K_X \) is \emph{nef} (this means that, for every \( \varepsilon > 0 \), there exists a smooth function \( \psi_\varepsilon \in C^\infty(X, \mathds{R}) \) such that \( - \mathrm{Ric}^{Ch}(\omega_0) + \sqrt{-1} \partial \overline{\partial} \psi_\varepsilon > -\varepsilon \omega_0 \)).

\subsection{Chern-Ricci flow for non-K\"ahler Calabi-Yau manifolds}

To motivate the study of the Chern-Ricci flow, we recall a fundamental result by \cite{gill} concerning non-Kähler Calabi–Yau manifolds \cite{tosatti}.

Recall that a compact holomorphic manifold $X$, of complex dimension $n$, is called {\em non-K\"ahler Calabi-Yau} if its first Bott–Chern class vanishes in Bott-Chern cohomology, \( c_1^{BC}(X) = 0 \) in \( H^{1,1}_{BC}(X, \mathds{R}) \).
In the Kähler case, this condition is equivalent to \( c_1(X) = 0 \) in de Rham cohomology \( H^2_{dR}(X, \mathds{R}) \), and the celebrated Calabi–Yau theorem guarantees the existence of a Ricci-flat Kähler metric.

In the non-Kähler setting, the following conditions are equivalent, see \cite{tosatti}:
\begin{itemize}
    \item \( c_1^{BC}(X) = 0 \) in \( H^{1,1}_{BC}(X, \mathds{R}) \);
    \item there exists a Hermitian metric \( \omega \) on \( X \) such that \( \mathrm{Ric}^{Ch}(\omega) = 0 \). Such a Chern-Ricci-flat metric can be obtained from a fixed Hermitian metric \( \omega_0 \) through various ansätze \cite{tosatti, szekelyhidi-tosatti-weinkove}: a conformal ansatz, \( \omega = e^\varphi \omega_0 \); a Calabi-type ansatz, \( \omega = \omega_0 + \sqrt{-1} \partial \overline{\partial} \varphi \); a Fu–Wang–Wu-type ansatz, \( \omega^{n-1} = \omega_0^{n-1} + \sqrt{-1} \partial \overline{\partial} \varphi \wedge \omega_1^{n-2} \), where \( \omega_1 \) is another fixed Hermitian metric on \( X \);
    \item the canonical bundle \( K_X \) is unitary flat, {\itshape i.e.}, it admits a Hermitian metric with vanishing curvature;
    \item there exists a Hermitian metric \( \omega \) whose Chern connection has restricted holonomy contained in \( {\rm SU}(n) \).
\end{itemize}
We observe that the condition that \( K_X \) is holomorphically torsion, namely, \( K_X^{\otimes \ell} \simeq \mathcal{O}_X \) for some \( \ell \geq 1 \), implies \( c_1^{BC}(X) = 0 \), but it is generally strictly weaker, see \cite{nakamura, magnusson}. Conversely, there exist holomorphic manifolds for which the de Rham class \( c_1(X) = 0 \) vanishes in \( H^2_{dR}(X, \mathds{R}) \), but \( c_1^{BC}(X) \neq 0 \): examples include Hopf surfaces and the hypothetical complex structure on the six-sphere \( {S}^6 \).

Similarly to Cao's theorem \cite{cao}, which gives a parabolic proof of the Calabi–Yau theorem for Kähler manifolds with \( c_1(X) = 0 \) via the Kähler-Ricci flow, an analogous result holds in the non-Kähler setting for compact non-Kähler Calabi–Yau manifolds.

\begin{theorem}[{\cite{gill}}]
Let \( X \) be a compact non-Kähler Calabi–Yau manifold. Then, for any initial Hermitian metric \( \omega_0 \), the Chern-Ricci flow \eqref{eq:crf} admits a long-time solution \( \omega(t) \) defined for all \( t \in [0, +\infty) \), which converges smoothly as \( t \to +\infty \) to a Hermitian metric \( \omega_\infty \) satisfying
\[
{\rm Ric}^{Ch}(\omega_\infty) = 0.
\]
More precisely, \( \omega_\infty \) is the unique Chern-Ricci flat metric of the form \( \omega_\infty = \omega_0 + \sqrt{-1} \, \partial \overline{\partial} \varphi \) for some smooth real function \( \varphi \).
\end{theorem}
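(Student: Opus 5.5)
The plan is to reduce the flow, via the theorem of Gill and Tosatti--Weinkove quoted above, to the parabolic complex Monge--Ampère equation \eqref{eq:pcma}, and then to follow Cao's strategy.

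\textbf{Reduction to a scalar equation.} Since $c_1^{BC}(X)=0$, there is a smooth real function $F$ with $\mathrm{Ric}^{Ch}(\omega_0)=\sqrt{-1}\partial\overline\partial F$. The reference form $\omega_0 - t\,\mathrm{Ric}^{Ch}(\omega_0)$ is then $\omega_0+\sqrt{-1}\partial\overline\partial(-tF)$. Setting $\psi:=\varphi - tF$, the equation \eqref{eq:pcma} becomes
\[
\frac{\partial \psi}{\partial t} = \log\frac{(\omega_0+\sqrt{-1}\partial\overline\partial\psi)^n}{\omega_0^n} - F, \qquad \psi(0)=0,
\]
with $\omega(t)=\omega_0+\sqrt{-1}\partial\overline\partial\psi(t)$. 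Taking $\psi_\varepsilon=-F$ in the characterization of the maximal time shows that $K_X$ is nef, hence $T=+\infty$. Long-time existence then follows from the stated criterion, provided we confirm that the a priori estimates below hold on every finite interval.

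\textbf{Uniform estimates.} First, $\dot\psi$ solves $\partial_t\dot\psi = \Delta_{\omega(t)}\dot\psi$, with the Chern Laplacian trace $\mathrm{tr}_{\omega(t)}\sqrt{-1}\partial\overline\partial$. The maximum principle therefore gives $|\dot\psi|\le \sup|F|$, which makes $\omega(t)^n$ uniformly equivalent to $\omega_0^n$. Next I would normalize by $\tilde\psi=\psi-\frac{1}{\int\omega_0^n}\int\psi\,\omega_0^n$ and seek a uniform $C^0$ bound on $\tilde\psi$. For this I would use Tosatti--Weinkove's $C^0$ estimate for the elliptic complex Monge--Ampère equation on Hermitian manifolds, applied at each fixed time to $(\omega_0+\sqrt{-1}\partial\overline\partial\psi)^n = e^{F+\dot\psi}\omega_0^n$, whose right-hand side is bounded. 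The second-order estimate $\mathrm{tr}_{\omega_0}\omega(t)\le C$ would come from the parabolic Aubin--Yau/Phong--Sturm computation, applied to $\log\mathrm{tr}_{\omega_0}\omega - A\tilde\psi$. The torsion terms of $\omega_0$ are absorbed as in Tosatti--Weinkove, and combining this with the volume bound yields uniform equivalence $C^{-1}\omega_0\le\omega(t)\le C\omega_0$. Higher-order estimates follow from a parabolic Evans--Krylov argument (or a Calabi-type $C^3$ estimate) and Schauder bootstrapping. \textbf{This $C^0$ and second-order step is where the main obstacle lies}: without the Kähler condition the torsion terms in the Laplacian of $\mathrm{tr}_{\omega_0}\omega$ are not cancelled, and the $C^0$ bound cannot be obtained by Yau's Moser iteration alone.

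\textbf{Convergence.} Differentiating again, $u:=\dot\psi$ satisfies a heat equation along $\omega(t)$, with the metrics now uniformly equivalent and smoothly bounded. A Harnack inequality (Li--Yau type, adapted as in Gill) then gives exponential decay of the oscillation, $\mathrm{osc}_X\dot\psi(t)\le Ce^{-\delta t}$. Hence $\tilde\psi(t)$ converges smoothly, by the uniform higher-order bounds and interpolation, to some $\psi_\infty$, while $\dot\psi$ tends to a constant $c$. The limit $\omega_\infty=\omega_0+\sqrt{-1}\partial\overline\partial\psi_\infty$ satisfies $\log(\omega_\infty^n/\omega_0^n)=F+c$, so $\mathrm{Ric}^{Ch}(\omega_\infty)=\mathrm{Ric}^{Ch}(\omega_0)-\sqrt{-1}\partial\overline\partial F=0$.

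\textbf{Uniqueness.} Suppose two such potentials solve $(\omega_0+\sqrt{-1}\partial\overline\partial\psi_i)^n=e^{F+c_i}\omega_0^n$. Comparing them at the maximum and minimum of $\psi_1-\psi_2$ forces $c_1=c_2$. The linearized operator has no zeroth-order term, so the strong maximum principle shows that $\psi_1-\psi_2$ is constant, and the two limits define the same metric.
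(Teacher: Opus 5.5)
The paper gives no proof of this statement beyond the citation to \cite{gill}, and your outline is essentially Gill's own argument. After the substitution $\psi=\varphi-tF$ you are solving exactly his equation $\partial_t\psi=\log((\omega_0+\sqrt{-1}\partial\overline\partial\psi)^n/\omega_0^n)-F$. The sequence is also his: a maximum-principle bound on $\dot\psi$, Tosatti--Weinkove-type zero- and second-order estimates, higher-order estimates, a Li--Yau Harnack inequality adapted from Cao giving $\mathrm{osc}_X\dot\psi\le Ce^{-\delta t}$, and uniqueness by the maximum principle.

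The only structural difference is that you read off $T=+\infty$ from the Tosatti--Weinkove characterization of the maximal time, rather than from the estimates. That is fine, and it makes your ``provided we confirm\dots'' caveat unnecessary. Two small fixes:
\begin{itemize}
\item With $\mathrm{Ric}^{Ch}(\omega_0)=\sqrt{-1}\partial\overline\partial F$ the correct choice is $\psi_\varepsilon=F$ (or $\psi=tF$ in the formula for $T$), not $-F$.
\item In the uniqueness step, say why a Chern--Ricci-flat $\omega'=\omega_0+\sqrt{-1}\partial\overline\partial\varphi'$ satisfies $\omega'^n=e^{F+c'}\omega_0^n$: the function $\log(\omega'^n/\omega_0^n)-F$ is pluriharmonic on the compact manifold $X$, hence constant.
\end{itemize}

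The step that does not work as you justify it is the $C^0$ bound. Tosatti--Weinkove's zero-order estimate for $(\omega_0+\sqrt{-1}\partial\overline\partial u)^n=e^{G}\omega_0^n$ is derived from their second-order estimate $\mathrm{tr}_{\omega_0}\omega'\le Ce^{A(u-\inf u)}$. The constant there depends on a lower bound for $\Delta G$, which enters Yau's computation of $\Delta'\log\mathrm{tr}_{\omega_0}\omega'$, and not only on $\sup|G|$. At a fixed time $G=F+\dot\psi(t)$, and nothing in your argument controls $\Delta\dot\psi$, so ``the right-hand side is bounded'' is not what that estimate needs.

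There are two ways to repair this.
\begin{enumerate}
\item Invoke an $L^\infty$ estimate that depends only on an $L^p$ bound for the density $e^{F+\dot\psi}$, for example the pluripotential estimate of Dinew--Ko\l{}odziej on Hermitian manifolds.
\item Reverse the order of your two steps. Apply the maximum principle to $\log\mathrm{tr}_{\omega_0}\omega-A\tilde\psi$ first; this gives $\mathrm{tr}_{\omega_0}\omega(t)\le Ce^{A(\tilde\psi-\inf_{X\times[0,t]}\tilde\psi)}$ with no $C^0$ input. The reason is that $\partial_t\mathrm{tr}_{\omega_0}\omega=\Delta_{\omega_0}\dot\psi=\Delta_{\omega_0}\log(\omega^n/\omega_0^n)-\Delta_{\omega_0}F$, so the term that is $\Delta G$ in the elliptic computation is here just $\Delta_{\omega_0}F$. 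Evaluating this bound at the time where $\inf_{X\times[0,t]}\tilde\psi$ is attained puts you exactly in the setting of Tosatti--Weinkove's $C^0$ argument, with bounded exponent $F+\dot\psi$.
\end{enumerate}
With either fix, the rest of your plan goes through.
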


\subsection{Chern-Ricci flow on compact complex surfaces}

We now focus on complex dimension \( 2 \), that is, on compact  holomorphic surfaces. As a first observation, we note that the Gauduchon condition, which in this setting is equivalent to the pluriclosed condition, is preserved along the Chern-Ricci flow, since the Chern-Ricci form \( {\rm Ric}^{Ch} \) is \( d \)-closed.
When the initial metric \( \omega_0 \) is Gauduchon, the maximal existence time of the flow can be also characterized as follows, thanks to Buchdahl’s Nakai–Moishezon criterion \cite{buchdahl}:
\[
T = \sup \left\{ \tau \geq 0 :  
\begin{array}{lc}
&\text{for all } t \in [0, \tau], \text{ the form } \alpha_t := \omega_0 - t\, \mathrm{Ric}^{Ch}(\omega_0) \text{ satisfies} \\
&\int_X \alpha_t^2 > 0 \text{ and } \int_D \alpha_t > 0 \\
& \text{ for every irreducible effective divisor } D \text{ with } D^2 < 0
\end{array}
\right\}.
\]
In other words, the Chern-Ricci flow exists as long as the volume of \( X \) with respect to \( \omega(t) \) remains positive and the volume of every irreducible curve with negative self-intersection does not collapse to zero.

We recall that compact holomorphic surfaces can be classified according to their Kodaira dimension, which takes the values \( -\infty \), \( 0 \), \( 1 \), or \( 2 \). For a detailed exposition of the Enriques–Kodaira–Siu classification, we refer the reader to the comprehensive book \cite{barth-hulek-peters-vandeven}.

A first fundamental distinction is whether the surface contains a \((-1)\)-curve, that is, a smooth rational curve \( D \simeq \mathds CP^1 \) with self-intersection \( D^2 = -1 \). If it does not, the surface is said to be \emph{minimal}.
By the Castelnuovo contraction theorem, every \((-1)\)-curve can be contracted. Consequently, any compact holomorphic surface can be obtained from a minimal one by blowing up finitely many points.

If \( X \) is {\bfseries non-minimal}, then there exists a \( (-1) \)-curve \( D \). By the adjunction formula, \( K_D = (K_X + D)|_D \), and since \( K_D \cdot D = 2g(D) - 2 = -2 \), it follows that \( K_X \cdot D = -1 < 0 \). Therefore, along the Chern-Ricci flow starting at any Gauduchon metric \( \omega_0 \), we have \( \int_D \omega(t) = \int_D \omega_0 + t \, K_X \cdot D = \int_D \omega_0 - t \), which becomes zero in finite time. Hence, the maximal existence time of the flow must be finite, \( T < +\infty \).
Conversely, if the initial metric $\omega_0$ is Gauduchon, the maximal existence time satisfies \( T < +\infty \), and the flow is non-collapsing, meaning that \( \mathrm{Vol}(X, \omega(t)) \geq c > 0 \) as \( t \to T^- \), then \( X \) contains \((-1)\)-curves, see \cite{tosatti-weinkove-CRF}.
In this case, denote the $(-1)$-curves by \( E_1, \dots, E_k \). Then, according to \cite{tosatti-weinkove-CRF}, there exists a smooth Gauduchon metric \( \omega_T \) on \( X \setminus \bigcup_i E_i \) such that \( \omega(t) \to \omega_T \) in \( \mathcal{C}^\infty_{\mathrm{loc}}(X \setminus \bigcup_i E_i) \) as \( t \to T^- \). Furthermore, the metric spaces \( (X, \omega(t)) \) converge in the Gromov-Hausdorff topology to a compact metric space \( (N, d_N) \) that is homeomorphic to the complex space obtained by contracting the curves \( E_1, \dots, E_k \) in \( X \); denote this map by \( \pi \colon X \to N \), with \( E_i = \pi^{-1}(p_i) \).
Independently, \cite{to-MathAnn, nie-MRL} proved that the Chern-Ricci flow can be continued in a weak sense on the limit space \( N \), with backward convergence in time. The argument above by Tosatti and Weinkove is proved under the additional assumption that \( d\omega_0 = \pi^*(d\beta) \) for some smooth real $(1,1)$-form \( \beta \) on \( N \), which in particular implies that \( d\omega_0 = 0 \) on the exceptional divisors of \( \pi \), see condition (*) in \cite{tosatti-weinkove-surfacesCRF}.
Once this condition is removed, what remains is to show that \( (N, d_N) \) is the metric completion of \( (N \setminus \{p_i\}, \omega_T) \). These two remaining steps would complete the proof that the Chern-Ricci flow contracts the \( (-1) \)-curves in the Gromov-Hausdorff topology, while converging smoothly outside the exceptional divisors.
This result would demonstrate that the Chern-Ricci flow performs a canonical surgical contraction, generalizing the results of \cite{song-tian} for Kähler surfaces, and fitting into the broader framework of the analytic Minimal Model Program.

In what follows, we will assume that the surface is {\bfseries minimal}.
Surfaces \( S \) of {\bfseries general type}, that is, those with \( \mathrm{Kod}(S) = 2 \), are always algebraic. 
Restricting attention to non-Kähler surfaces, which are topologically characterized by having odd first Betti number \cite{lamari, buchdahl}, only the following cases remain.

\begin{itemize}
\item For \( {\rm Kod}(X) = 1 \), we have \textbf{minimal non-K\"ahler properly elliptic surfaces}, which are described, up to finite covers, as elliptic bundles, see {\itshape e.g.} \cite{brinzanescu, brinzanescu-book}. In this case, the Chern-Ricci flow starting from any Gauduchon metric exists for all \( t \geq 0 \), and exhibits the following collapsing behavior: the normalized metrics \( \sfrac{\omega(t)}{t} \) converge, in the Gromov–Hausdorff sense, to a Riemann surface equipped with the distance function induced by an orbifold Kähler–Einstein metric, see \cite{tosatti-weinkove-MathAnn}.

\item For \( \mathrm{Kod}(X) = 0 \), we have \textbf{Kodaira surfaces}, which satisfy \( c_1^{BC}(X) = 0 \), meaning they are non-K\"ahler Calabi–Yau manifolds. This case was studied in detail by \cite{gill}, who showed that the Chern-Ricci flow exists for all time and converges to the unique Chern-Ricci flat metric in the \( \sqrt{-1}\partial\overline{\partial} \)-class of the initial metric.

\item Lastly, the {\bfseries class VII} refers to compact holomorphic surfaces satisfying \( \mathrm{Kod}(X) = -\infty \) and \( b_1(X) = 1 \). 
Those with \( b_2(X) = 0 \) are known to be either Hopf surfaces or Inoue–Bombieri surfaces \cite{inoue, bombieri}, as established in \cite{bogomolov, li-yau-zheng, teleman-IJM}. 
The first known examples with \( b_2(X) > 0 \) were introduced in \cite{inoue-1, inoue-2}, and subsequently extended by Kato \cite{kato}, who developed a construction based on iteratively blowing up the standard ball in \( \mathds{C}^2 \), followed by holomorphic surgery.
These surfaces are characterized by the existence of a \emph{global spherical shell}, that is, an open subset \( U \subset X \) biholomorphic to a neighborhood of the \( 3 \)-sphere \( S^3 \) in \( \mathds{C}^2 \), such that the complement \( X \setminus U \) remains connected. Moreover, they can be viewed as degenerations of blown-up primary Hopf surfaces \cite{nakamura-Sugaku} and admit singular holomorphic foliations \cite{dloussky-oeljeklaus}. Each Kato surface \( X \) contains exactly \( b_2(X) \) rational curves, and conversely, every minimal compact complex surface in class VII possessing precisely \( b_2(X) > 0 \) rational curves is a Kato surface \cite{dloussky-oeljeklaus-toma}. It is conjectured that no other examples exist in class VII beyond these, a statement known as the Global Spherical Shell conjecture.

For classical Hopf surfaces of the form \(\sfrac{\mathds{C}^2 \setminus \{0\}}{\mathds{Z}}\), where \(\mathds{Z}\) is generated by the map \(\lambda \cdot {\rm id}\) with \(\lambda \in \mathds{C}\) satisfying \(0 < |\lambda| < 1\), the solution to the Chern-Ricci flow starting from the standard metric admits an explicit expression \cite{tosatti-weinkove-surfacesCRF}. More precisely, the flow collapses in finite time and converges smoothly to a non-negative \((1,1)\)-form whose kernel defines a smooth distribution, whose iterated Lie brackets generate the full tangent space of the Sasaki \(S^3\). In this way, the limiting behavior of the Chern-Ricci flow encodes both the differential and complex geometric structures inherent to the Hopf surface. Further insights can be found in \cite{edwards}, where results support the conjecture that the Gromov-Hausdorff limit is isometric to a round \(S^1\).
The Chern-Ricci flow on Inoue-Bombieri surfaces has been extensively studied in \cite{tosatti-weinkove-surfacesCRF, lauret, lauret-valencia, fang-tosatti-weinkove-zheng, angella-tosatti}, which will be discussed in detail in the following subsection.
Finally, the ``optimistic conjecture'' in \cite[Section 4.6]{fang-tosatti-weinkove-zheng} seeks to recover a global spherical shell through the asymptotic behavior of the Chern-Ricci flow.
\end{itemize}

\subsection{Chern-Ricci flow on Inoue surfaces}

Inoue surfaces are divided into three distinct families: \( S_M \), \( S^+ \), and \( S^- \), each depending on parameters.

We first recall the construction and the geometric properties of the {\bfseries Inoue–Bombieri surface of type \( S_M \)} \cite{inoue, bombieri}. Let \( M \in {\rm SL}(3, \mathds{Z}) \) be a matrix with one real eigenvalue \( \lambda > 1 \), and a pair of complex conjugate eigenvalues \( \mu, \bar{\mu} \in \mathds{C} \). Fix a real eigenvector \( (\ell_1, \ell_2, \ell_3) \in \mathds{R}^3 \) corresponding to \( \lambda \), and a complex eigenvector \( (m_1, m_2, m_3) \in \mathds{C}^3 \) corresponding to \( \mu \).
For later use, we observe that \( \lambda \) is an irrational algebraic number, and at least one of the slopes of the real eigenvector, say \( \sfrac{\ell_1}{\ell_2} \), is irrational.
On \( \mathds{C} \times \mathds{H} \), where \( \mathds{H} = \{ w \in \mathds{C} : \Im w > 0 \} \) is the upper half-plane, consider complex coordinates \( (z = x_1 + \sqrt{-1} y_1, w = x_2 + \sqrt{-1} y_2) \), where $y_2>0$. We define an action of the group generated by the following automorphisms:
\[
f_0(z,w) = (\mu z, \lambda w), \qquad
f_j(z,w) = (z + m_j,\; w + \ell_j) ,
\]
for \( j \in \{1,2,3\} \).
The quotient
\[ S_M := \mathds{C} \times \mathds{H} / \langle f_0, f_1, f_2, f_3 \rangle \]
defines a compact, holomorphic, non-Kähler surface, called {\em Inoue–Bombieri surface of type \( S_M \)}. The fact that it does not admit any Kähler metrics follows from \( b_1(S_M) = \rk \left( \sfrac{\pi_1(S_M)}{[\pi_1(S_M), \pi_1(S_M)]} \right) = 1 \) being odd.

The Inoue–Bombieri surfaces $S_M$ possess a rich differentiable and holomorphic structure, which we now describe. The projection \( \mathds{C} \times \mathds{H} \ni (z, w) \mapsto \Im w \in \mathds{R}^{>0} \) descends to the quotient and induces the structure of a differentiable torus bundle:
\[
\xymatrix{
T^3 \ar@{^{(}->}[r] & S_M \ar[d]^\pi \\
& S^1
}
\]
where the fibre is a \( 3 \)-torus \( T^3 \simeq \sfrac{\mathds R^3}{\langle f_1,f_2,f_3\rangle} \simeq \sfrac{\mathds R^3}{\mathds Z^3} \), and the base is the circle obtained as \( \sfrac{\mathds R^{>0}}{\langle\lambda\rangle} \).
From the holomorphic perspective, consider the real, closed $(1,1)$-form
\[
\alpha := \frac{\sqrt{-1}}{y_2^2} \, dw \wedge d\bar{w},
\]
which is non-negative and represents \( c_1(K_{S_M}) = -c_1^{BC}(S_M) \). Its kernel defines a holomorphic parabolic foliation whose leaves are biholomorphic to \( \mathds{C} \).
A key point is the strict relation between the differentiable and holomorphic structures, arising from the irrationality in the construction. More precisely, the leaves of the holomorphic foliation are dense in the fibres of the smooth torus bundle \cite{inoue}, see \cite[Lemma 5.2]{tosatti-weinkove-surfacesCRF} for details.
Concerning the metric structure, we note that 
\[
\omega_T := 4\alpha + \sqrt{-1} \, y_2 \, dz \wedge d\bar{z}
\]
defines a Gauduchon and locally conformally Kähler metric on \( S_M \), called the Tricerri metric \cite{tricerri-Torino}, with the interesting property that its restriction to any leaf is a flat Kähler metric.

For details on the construction of the {\bfseries Inoue-Bombieri surfaces of type \( S^+ \)}, we refer to \cite{inoue}, see also \cite{angella-tosatti}. These surfaces share many geometric features with those of type \( S_M \). The main difference is that \( S^+ \) surfaces are bundles over \( S^1 \), whose fibres are compact quotients of the three-dimensional Heisenberg group. Once again, they admit a holomorphic parabolic foliation with leaves biholomorphic to \( \mathds{C}^\times \), which are dense in the fibres of the bundle.
These surfaces also carry a Gauduchon metric, (but not necessarily locally conformally Kähler,) originally constructed by Tricerri \cite{tricerri-Torino} and by Vaisman \cite{vaisman-PoliTo}.

Furthermore, every {\bfseries Inoue–Bombieri surface of type \( S^- \)} admits an unramified double cover given by an Inoue surface of type \( S^+ \).

In the following, we consider the Chern-Ricci flow on an Inoue–Bombieri surface \( S \), of type either \( S_M \) or \( S^\pm \), starting from a Gauduchon metric.  
Since \( c_1^{BC}(S) \leq 0 \), the flow exists for all times \( t \geq 0 \).  
To study its long-time behaviour, it is convenient to rescale the solution. Therefore we study the {\em normalized Chern-Ricci flow}, defined by
\[
\frac{\partial}{\partial t} \omega(t) = -\mathrm{Ric}^{Ch}(\omega(t)) - \omega(t), \quad \omega(0) = \omega_0,
\]
where \( \omega_0 \) is a Gauduchon metric on \( S \).

In \cite{tosatti-weinkove-surfacesCRF}, the explicit solution to the normalized Chern–Ricci flow starting from the Tricerri/Vaisman metric \( \omega_T \) is computed and analyzed. It is shown that \( \omega(t) \to \alpha \) uniformly as \( t \to +\infty \), and that the associated metric spaces \( (S, \omega(t)) \) converge in the Gromov–Hausdorff sense to a circle \( S^1 \). This convergence has been extended to the case where the initial metric is any locally homogeneous metric \cite{lauret-TAMS, lauret-valencia}, and more generally, to metrics of the form \( \omega_{\text{SFL}} + \sqrt{-1}\partial\overline{\partial} u \), for $u$ smooth real function, where \( \omega_{\text{SFL}} \) is strongly flat along the leaves \cite{fang-tosatti-weinkove-zheng}.
Here, the condition of being \emph{strongly flat along the leaves} means that the restriction of \( \omega_{\text{SFL}} \) to each leaf \( \mathcal{L} \) of the holomorphic foliation is a flat Kähler metric. Moreover, on the universal cover \( p \colon \mathds{C} \times \mathds{H} \to S \), one has \( p^* \omega_{\text{SFL}} = c \cdot \sqrt{-1} dz \wedge d\bar{z} \), where \( c = c(\Im w) \) is a smooth positive function depending only on the imaginary part of \( w \) in the case of \( S_M \), while it is a positive constant \( c > 0 \) in the case of \( S^\pm \).
We then proved that any Gauduchon metric can be deformed within its \(\sqrt{-1}\partial\overline{\partial}\)-class to a metric that is strongly flat along the leaves, thereby extending the convergence result to arbitrary Gauduchon initial metrics.

\begin{theorem}[{\cite{fang-tosatti-weinkove-zheng, angella-tosatti}}]
Let \( S \) be an Inoue–Bombieri surface endowed with a Gauduchon metric \( \omega \). Then the solution \( \omega(t) \) of the normalized Chern–Ricci flow starting at \( \omega \) exists for all time and converges uniformly to a positive multiple of the degenerate form \( \alpha \). Moreover, the associated metric spaces \( (S, \omega(t)) \) converge in the Gromov–Hausdorff sense to the circle \( S^1 \) equipped with the standard round metric.
\end{theorem}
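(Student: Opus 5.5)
The plan is to reduce to the strongly-flat-along-the-leaves case treated in \cite{fang-tosatti-weinkove-zheng}. The key observation is that the normalized Chern–Ricci flow depends on the initial metric only through its $\sqrt{-1}\partial\overline\partial$-class, up to a smooth change of the scalar potential. Indeed, as in the reduction of \eqref{eq:crf} to \eqref{eq:pcma}, if $\omega_0' = \omega_0 + \sqrt{-1}\partial\overline\partial u$, then both flows can be written as $e^{-t}\omega_0 + (1-e^{-t})(-\mathrm{Ric}^{Ch}(\omega_0)) + \sqrt{-1}\partial\overline\partial\varphi(t)$. The two potentials differ by $e^{-t}u$ plus the solution of a parabolic Monge–Ampère equation. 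This difference is controlled uniformly in time, since $e^{-t}u\to 0$, so the $C^0$ and Gromov–Hausdorff conclusions transfer.

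\textbf{Step 1 (the main obstacle).} Given an arbitrary Gauduchon metric $\omega$ on $S$, find a smooth real function $u$ such that $\omega_{\mathrm{SFL}} := \omega + \sqrt{-1}\partial\overline\partial u$ is positive and strongly flat along the leaves. I would work on the universal cover $\mathds{C}\times\mathds{H}$ in the case $S_M$; the case $S^+$ is analogous, and $S^-$ follows by passing to the double cover and averaging with the deck involution. There I would write the leafwise component $\omega_{z\bar z}$. On each leaf, which is biholomorphic to $\mathds{C}$, the target is $c(\Im w)\,\sqrt{-1}dz\wedge d\bar z$. I would first average $\omega_{z\bar z}$ over the fibres $T^3$ of $\pi\colon S\to S^1$; since the leaves are dense in the fibres, this is the natural candidate for $c(y_2)$. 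I would then solve, leaf by leaf, $\partial_z\partial_{\bar z}u = c - \omega_{z\bar z}$. The right-hand side has zero fibre average, so I would expand it in Fourier series on $T^3$. The leafwise operator $\partial_z\partial_{\bar z}$ acts on the Fourier mode $k\in\mathds{Z}^3\setminus\{0\}$ by multiplication by a factor of the form $|\langle k, m\rangle|^2$ (suitably scaled by $y_2$). Solvability with smooth dependence then requires a Diophantine lower bound on these factors. I expect this to follow from the algebraicity of $\lambda,\mu$ and the irrationality of the eigenvector slopes (a Liouville-type estimate for algebraic numbers), which gives polynomial decay of the inverse and hence a smooth $u$.

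The Gauduchon hypothesis is used at this point. For $\partial\overline\partial u$ to correct only the leafwise component while keeping the form positive, the mixed and transverse components must be compatible. I expect the Gauduchon condition $\partial\overline\partial\omega=0$ to ensure that the transverse part can be absorbed, perhaps after adding a large multiple of a function of $y_2$. Positivity would then follow by adding $A\,\phi(y_2)$ with $\sqrt{-1}\partial\overline\partial\phi(y_2)\ge c\,\alpha$; such a term is available since $\alpha$ itself is $\sqrt{-1}\partial\overline\partial$ of $-\log y_2$ up to a constant.

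\textbf{Step 2.} Apply \cite{fang-tosatti-weinkove-zheng} to the flow starting at $\omega_{\mathrm{SFL}}$. This gives long-time existence, which is also automatic since $K_S$ is nef and $c_1^{BC}(S)\le 0$. It also gives uniform convergence to a positive multiple of $\alpha$ and Gromov–Hausdorff convergence to a round $S^1$.

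\textbf{Step 3.} Compare the flow starting at $\omega$ with the flow starting at $\omega_{\mathrm{SFL}}$ via the potential equation. The difference $\psi$ of the two potentials satisfies a parabolic equation of the form $\partial_t\psi = \log\frac{(\hat\omega_t+\sqrt{-1}\partial\overline\partial(\varphi_{\mathrm{SFL}}+\psi))^2}{(\hat\omega_t+\sqrt{-1}\partial\overline\partial\varphi_{\mathrm{SFL}})^2}$, with a source term of size $e^{-t}$. The maximum principle then gives $|\psi|\le Ce^{-t}$, or at least $|\psi|\le C$. Combined with the higher-order estimates from \cite{fang-tosatti-weinkove-zheng}, which persist under bounded perturbations, this shows that $\omega(t)$ and the flow from $\omega_{\mathrm{SFL}}$ have the same uniform limit. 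Gromov–Hausdorff convergence follows because the two metrics are uniformly equivalent up to factors tending to $1$ along the fibres of $\pi$.
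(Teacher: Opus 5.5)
Your overall route is the paper's: move $\omega$ within its $\sqrt{-1}\partial\overline\partial$-class to a form that is strongly flat along the leaves, by solving the leafwise Laplace equation with Fourier series on the $T^3$-fibres and a Liouville-type bound, and then invoke \cite{fang-tosatti-weinkove-zheng}. But your Step~1 never actually uses the Gauduchon hypothesis, and the role you assign to it is not the right one.

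The target the reduction needs is $(\omega+\sqrt{-1}\partial\overline\partial u)\wedge\alpha=c\,\omega_T^2$ with $c$ a \emph{constant}. Equivalently, the leafwise coefficient must be $\omega_{z\bar z}+u_{z\bar z}=8c\,y_2$, not an arbitrary function of $\Im w$. Since $u_{z\bar z}$ integrates to zero on each fibre and $\ker\Delta_{\mathcal D}=\pi^*\mathcal C^\infty(S^1,\mathds R)$, this is solvable exactly when $A(y_2):=\int_{T^3}\omega_{z\bar z}\,dx_1\,dy_1\,dx_2$ is a constant multiple of $y_2$. The Gauduchon condition gives exactly this, in three steps:
\begin{itemize}
\item $d\,\pi_*(\overline\partial\omega)=\pi_*(\partial\overline\partial\omega)=0$;
\item restricting $\overline\partial\omega$ to the fibres shows that $\pi_*(\overline\partial\omega)$ is a constant multiple of $A'(y_2)$;
\item $f_0^*\omega=\omega$ together with $\lambda|\mu|^2=1$ gives $A(\lambda y_2)=\lambda A(y_2)$, whence $A(y_2)=Cy_2$.
\end{itemize}
Your choice of $c(y_2)$ as the fibre average makes the equation solvable for \emph{every} Hermitian metric. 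So as written you never check that $c(y_2)/y_2$ is constant.

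This is not cosmetic. Take $\omega=e^{h}\omega_T$ with $h$ a non-constant $\log\lambda$-periodic function of $s=\log y_2$; this metric is not Gauduchon. Its restriction to each leaf is already flat, namely $e^{h}y_2\sqrt{-1}dz\wedge d\bar z$. The normalized flow stays of the form $e^{-t}e^{h}y_2\sqrt{-1}dz\wedge d\bar z+\beta(t,s)\,\alpha$, with $\partial_t\beta=\tfrac14\big[(h+\log\beta)_{ss}-(h+\log\beta)_s+1\big]-\beta$. Integrating against periodic test functions of $s$ shows that $\beta\to\kappa>0$ uniformly would force $h_{ss}-h_s\equiv 4\kappa-1$, hence $h$ constant. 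So the missing ingredient is precisely the paper's compatibility computation. The Gauduchon condition plays no role in ``absorbing the transverse part''.

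Your positivity device also does not exist. First, $\log y_2$ is not a function on $S_M$, since it shifts by $\log\lambda$ under $f_0$. More generally, no $\phi\in\mathcal C^\infty(S,\mathds R)$ can satisfy $\sqrt{-1}\partial\overline\partial\phi\ge c\,\alpha$ with $c>0$: pairing with the Gauduchon metric $\omega_T$ would give $0=\int_S\sqrt{-1}\partial\overline\partial\phi\wedge\omega_T\ge c\int_S\alpha\wedge\omega_T>0$. The only $\sqrt{-1}\partial\overline\partial$-corrections preserving the leafwise part come from functions of $y_2$. Hence you cannot guarantee $\omega_{\mathrm{SFL}}>0$, and starting a flow at $\omega_{\mathrm{SFL}}$ in Step~2 is not justified.

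The paper needs none of this. It only solves the leafwise equation and applies \cite{fang-tosatti-weinkove-zheng}, which covers initial metrics of the form $\omega_{\mathrm{SFL}}+\sqrt{-1}\partial\overline\partial v$, directly to $\omega=\omega_{\mathrm{SFL}}-\sqrt{-1}\partial\overline\partial u$, which is already a metric. (Positivity along the leaves is automatic, and it already makes $e^{-t}\omega_{\mathrm{SFL}}+(1-e^{-t})\alpha>0$ for $t$ large.) This makes your Step~3 superfluous. As written it would not suffice anyway: the bound $|\psi|\le e^{-t}\sup|u|$ does follow from $\partial_t\psi=\log\big(\omega'(t)^2/\omega(t)^2\big)-\psi$ and the maximum principle. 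But it gives no control of $\sqrt{-1}\partial\overline\partial\psi$, so it yields neither uniform convergence of the metrics nor the Gromov--Hausdorff statement.
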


\begin{proof}
Thanks to \cite{fang-tosatti-weinkove-zheng}, it suffices to show that any Gauduchon metric \(\omega\) can be deformed to a metric of the form \(\omega + \sqrt{-1}\partial\overline{\partial} u\), for some smooth real-valued function \(u\), to obtain a metric that is strongly flat along the leaves.
This condition is equivalent to requiring that \( (\omega + \sqrt{-1}\partial\overline{\partial} u) \wedge \alpha = c\, \omega_T^2 \), where $c>0$ is a positive constant and \( \omega_T \) denotes the Tricerri/Vaisman metric.
In other words, we are looking for a smooth solution \( u \) of the degenerate elliptic equation
\[
\frac{\sqrt{-1} \partial \overline{\partial} u \wedge \alpha}{\omega_T^2} = -\frac{\omega \wedge \alpha}{\omega_T^2} + \frac{\int_S \omega \wedge \alpha}{\int_S \omega_T^2},
\]
which can expressed in terms of the leafwise Laplacian
\[
\Delta_{\mathcal{D}} u := \frac{\sqrt{-1} \partial \overline{\partial} u \wedge \alpha}{\omega_T^2}
\]
as
\[
\Delta_{\mathcal{D}} u = G(\omega), \qquad \text{ where } G(\omega):=-\frac{\omega \wedge \alpha}{\omega_T^2} + \frac{\int_S \omega \wedge \alpha}{\int_S \omega_T^2}.
\]

We first observe that the necessary condition for the solvability of the equation \( \Delta_{\mathcal{D}} u = G(\omega) \) is indeed satisfied, due to the Gauduchon condition on \( \omega \). 
To see this, consider a general Hermitian metric of the form
\[
\omega = \sqrt{-1} \, r^2 y_2 \, dz \wedge d\bar{z} + \sqrt{-1} \, s^2\, \frac{1}{y_2^2} \, dw \wedge d\bar{w} 
+ u \frac{1}{\sqrt{y_2}} \, dz \wedge d\bar{w} 
- \bar{u} \frac{1}{\sqrt{y_2}} \, dw \wedge d\bar{z},
\]
where \( r^2 > 0 \), \( s^2 > 0 \), and \( r^2 s^2 - |u|^2 > 0 \) hold to ensure that \( \omega \) is positive-definite.
The condition for \( \omega \) to be Gauduchon, that is \( \partial\overline\partial\omega = 0 \), can be reformulated in terms of the fibration structure as
\[
d\left( \int_{\pi^{-1}(y_2)} \overline\partial \omega \right) = 0,
\]
which is equivalent to requiring that the function
\[
R(y_2) := \int_{\pi^{-1}(y_2)} r \cdot \mathrm{vol}(\omega_T|_{\pi^{-1}(y_2)})
\]
is constant in \( y_2 \).
On the other hand, the equation \( \Delta_{\mathcal{D}} u = G(\omega) \) admits a solution only if \( G(\omega) \) is orthogonal to \( \ker \Delta_{\mathcal{D}} \) with respect to the \( L^2 \)-inner product induced by \( \omega_T \). 
Due to the density of the leaves in the fibres, we have \( \ker \Delta_{\mathcal{D}} = \pi^* \mathcal{C}^\infty(S^1,\mathds{R}) \), and thus the orthogonality condition becomes:
\[
\int_{\pi^{-1}(y_2)} G(\omega) \, \mathrm{vol}(\omega_T|_{\pi^{-1}(y_2)}) = 0 \quad \text{for all } y_2 \in S^1.
\]
A direct computation shows that the left-hand side equals
\[
-\frac{1}{8} R(y_2) + \frac{1}{8} \dashint_S R(y_2) \, \omega_T^2,
\]
where \( R(y_2) := \int_{\pi^{-1}(y_2)} r \cdot \mathrm{vol}(\omega_T|_{\pi^{-1}(y_2)}) \). Hence, the solvability condition is satisfied whenever \( \omega \) is Gauduchon.

\medskip

We now prove that the above equation admits a solution, focusing on the case of Inoue–Bombieri surfaces of type \( S_M \). These can be viewed as mapping tori of the form
\[
S_M = \frac{ T^3 \times [1, \lambda] }{ (q,1) \sim (\psi(q), \lambda) },
\]
where \( \psi \) is a diffeomorphism of the \(3\)-torus \( T^3 \).  
Here, \( T^3 = \sfrac{\mathds{R}^3}{\mathds{Z}\langle \partial_1, \partial_2, \partial_3 \rangle} \), where \( \partial_i := (\Re m_i, \Im m_i, \ell_i) \) for \( i \in \{ 1,2,3 \} \).
We aim to solve the equation \( \Delta_{\mathcal{D}} u = g \) on \( T^3 \times [1, \lambda] \), under the assumption that the datum satisfies the compatibility condition \( g(q, 1) = g(\psi(q), \lambda) \). We will later show that the solution \( u \) descends to the quotient \( S_M \).

Explicit computations show that the leafwise Laplacian \( \Delta_{\mathcal{D}} \), when expressed in coordinates adapted to the lattice, takes the form
\[
\Delta_{\mathcal{D}} = \frac{1}{32 y_2} 
\begin{pmatrix}
\partial_1 & \partial_2 & \partial_3
\end{pmatrix}
Z
\begin{pmatrix}
\partial_1 \\
\partial_2 \\
\partial_3
\end{pmatrix},
\]
where \( Z \) is a symmetric, semi-positive definite matrix of rank 2. Its kernel is generated by the vector \( (\ell_1, \ell_2, \ell_3) \).

We exploit the Fourier expansion along the torus. More precisely, we write the datum as
\[
g(t, y_2) = \sum_{k \in \mathds{Z}^3} g_k(y_2) \exp(2\pi \sqrt{-1} \langle t, k \rangle ),
\]
where \( g_k(y_2) \) are smooth functions, and the zero mode \( g_0 \) vanishes due to the Gauduchon assumption on \(\omega\).
Note that, since \(\lambda \not\in \mathds{Q}\), we have \(\ker Z \cap \mathds{Z}^3 = \{0\}\). Therefore, we obtain a distributional solution by setting
\[
u_k(y_2) = \frac{1}{(2\pi \sqrt{-1})^2} \, \frac{32 \, y_2 \cdot g_k(y_2)}{k Z k^t}.
\]

The Liouville approximation theorem for irrational algebraic numbers ensures that the solution \( u \) is indeed smooth on \( T^3 \). More precisely, since \(\sfrac{\ell_1}{\ell_2}\) is assumed to be irrational but algebraic of degree \( d \), we have the estimates
\begin{align*}
|k Z k^t| &\gsim {\rm dist}((k_1, k_2, k_3), {\rm span}(\ell_1, \ell_2, \ell_3)) \geq {\rm dist}((k_1, k_2), (\ell_1, \ell_2)) \\
& \gsim \left| \frac{k_2}{k_1} - \frac{\ell_2}{\ell_1} \right|^2 \cdot |k_1|^2 
\gsim \frac{1}{|k_1|^{2d - 2}} \geq \frac{1}{|k|^{2(d-1)}},
\end{align*}
where \( k = (k_1, k_2, k_3) \in \mathds{Z}^3 \setminus \{0\} \).
Therefore, since our datum \( g \) is smooth, for any \( N > 0 \) there exists a constant \( C_N \) such that
\[
|u_k(y_2)| < C_N |k|^{-N} \quad \text{as } |k| \to +\infty.
\]
This decay property guarantees that the Fourier series defines \( u \) smoothly function on \( T^3 \).
The smoothness in the variable \( y_2 \) follows from the smoothness of the Fourier coefficients and the continuous dependence.

Finally, uniqueness of the solution ensure that it descends to $S_M$.

\medskip

We briefly sketch the case of the Inoue surface \( S^+ \), while the remaining case of \( S^- \) reduces to \( S^+ \) via the unramified double cover.
In this case, we employ partial Fourier expansion on nilmanifolds, a technique that has proven useful in many other contexts, see {\itshape e.g.} \cite{auslander-tolimieri, deninger-singhof, richardson, holt-zhang-MRL, holt-zhang-AdvMath, ruberman-saveliev}, and that beautifully connects complex geometry, pde's, and number theory.

In this case, an arbitrary function on the fibres over \(y_2\) can be identified with a smooth function \( f(x_2, y_1, x_1) \) satisfying the periodicity conditions
\[
\begin{cases}
f(x_2, y_1, x_1) = f(x_2 + a_1,\, y_1 + y_2 b_1,\, x_1 + b_1 x_2 + c_1), \\
f(x_2, y_1, x_1) = f(x_2 + a_2,\, y_1 + y_2 b_2,\, x_1 + b_2 x_2 + c_2), \\
f(x_2, y_1, x_1) = f(x_2,\, y_1,\, x_1 + c_3),
\end{cases}
\]
where \( a_1, b_1, c_1, a_2, b_2, c_2, c_3 \) are parameters arising in the construction.
We perform a partial expansion 
\[
f(x_2,y_1,x_1) = \sum_{k \in \mathds{Z}} f_k(x_2,y_1) \exp\left( 2 \pi \sqrt{-1} \frac{x_1}{c_3} k \right)
\]
with fast-decay coefficients satisfying the periodicities
\begin{eqnarray*}
f_k(x_2,y_1) &=& f_k(x_2 + a_1, y_1 + y_2 b_1) \exp\left( 2 \pi \sqrt{-1} \left( \frac{b_1}{c_3} x_2 + \frac{c_1}{c_3} \right) k \right) \\
&=& f_k(x_2 + a_2, y_1 + y_2 b_2) \exp\left( 2 \pi \sqrt{-1} \left( \frac{b_2}{c_3} x_2 + \frac{c_2}{c_3} \right) k \right).
\end{eqnarray*}
In particular,
\[
f_0(x_2,y_1) = f_0(x_2 + a_1, y_1 + y_2 b_1) = f_0(x_2 + a_2, y_1 + y_2 b_2),
\]
therefore,
\[
f_0(x_2,y_1) = \sum_{m,n \in \mathds{Z}} f_{0,m,n} \exp \left(
\frac{2\pi \sqrt{-1} \left( y_2 b_2 x_2 - a_2 y_1 \right)}{y_2 (a_1 b_2 - a_2 b_1)} m
+ \frac{2\pi \sqrt{-1} \left( - y_2 b_1 x_2 + a_1 y_1 \right)}{y_2 (a_1 b_2 - a_2 b_1)} n
\right).
\]

By using the above expansion, the equation \( \Delta_{\mathcal{D}} u = g \) on fibres is reduced to the following system of linear ordinary differential equations of second order:
\[
\left(\frac{\partial}{\partial y_1}\right)^2 u_k(x_2,y_1) - 4\pi^2 \frac{k^2}{c_3^2} u_k(x_2,y_1) = g_k(x_2,y_1),
\]
for \( k \in \mathds{Z} \).

To solve the equation for \( k \neq 0 \), we apply the classical theory of ode's, while for \( k=0 \) it reduces to the algebraic equation
\[
-4\pi^2 u_{0,m,n} \left( \frac{n a_1 - m a_2}{y_2(a_1b_2 - a_2b_1)} \right)^2 = g_{0,m,n}.
\]
Arguments as before allow us to conclude.
\end{proof}

\ifdefined\frommain
\else
    \bibliographystyle{alpha}
    \bibliography{biblio}
\fi

\subsection{Problems}

\begin{exercise}
Let $X$ be a holomorphic manifold endowed with a Hermitian metric $\omega$. Prova that the first Ricci form of the Chern connection, ${\rm Ric}^{Ch}(\omega)$, can be locally expressed as
$$ {\rm Ric}^{Ch}(\omega) \stackrel{\text{loc}}{=} -\sqrt{-1}\partial\overline\partial\log\det\omega . $$
In particular, it defines a cohomology class in $H^{1,1}_{BC}(X)$, which is independent of the chosen metric. It is denoted as $c_1^{BC}(X)$, the first Bott-Chern class of $X$, and maps to the usual first Chern class $c_1(X)$ under the natural map $H^{1,1}_{BC}(X) \to H^2_{dR}(X,\mathds C)$.
\end{exercise}

\begin{exercise}
Let \( X \) be a compact holomorphic manifold and let \( L \to X \) be a holomorphic line bundle. Given a Hermitian metric \( h \) on the fibres of \( L \), its curvature is locally defined by
\[
R_h \stackrel{\text{loc}}{:=} -\sqrt{-1} \, \partial \overline{\partial} \log h,
\]
which gives a real, closed \((1,1)\)-form on \( X \). The Bott–Chern cohomology class of \( R_h \) in \( H^{1,1}_{BC}(X) \) is independent of the choice of \( h \), and is denoted by \( c_1^{BC}(L) \).
Show that a Hermitian metric \( g \) on \( X \) induces a Hermitian metric \( \det g \) on the fibres of the anticanonical line bundle \( K_X^{-1} \), that is, the determinant of the holomorphic tangent bundle, and that the Chern-Ricci form satisfies \( \mathrm{Ric}^{Ch}(\omega) = R_{\det g} \). In particular, \( c_1^{BC}(X) = c_1^{BC}(K_X^{-1}) \).
\end{exercise}

\begin{exercise}
Let \( X \) be a holomorphic manifold endowed with a Hermitian metric \( \omega \). Show that under a conformal change of metric, the Chern-Ricci form transforms as
\[
\mathrm{Ric}^{Ch}(e^f \omega) = \mathrm{Ric}^{Ch}(\omega) + \sqrt{-1} \, \partial \overline{\partial} f.
\]
Deduce that \( c_1^{BC}(X) = 0 \) if and only if \( X \) admits a Hermitian metric with vanishing Chern-Ricci curvature.
\end{exercise}

\begin{exercise}[{\cite{tosatti}}]
Let \( X \) be a compact holomorphic manifold of complex dimension \( n \) such that \( c_1^{BC}(X) = 0 \). Show that, under any one of the following assumptions, it follows that \( c_1(X) = 0 \) in \( H^2_{dR}(X, \mathds{R}) \):
\begin{itemize}
\item \( \mathrm{Kod}(X) \geq 0 \);
\item \( b_1(X) = 0 \);
\item \( X \) admits Kähler metrics;
\item \( n = 2 \);
\item \( X \) belongs to the Fujiki class \( \mathcal{C} \).
\end{itemize}
\end{exercise}

\begin{exercise}[{\cite{magnusson}}]
Let \( Y \) be a compact Kähler Calabi–Yau manifold of complex dimension \( n \), namely, \( K_Y \simeq \mathcal{O}_Y \). Let \( f\colon Y \to Y \) be an automorphism such that the induced action on \( H^0(Y, K_Y) \simeq \mathds{C} \) has infinite order.
Construct a complex manifold \( X \) of dimension \( n+1 \) as follows: consider the product \( Y \times \mathds{C} \), with an action of \( \mathds{Z}^2 \) generated by
\[
(1,0)\cdot(x,z) = (x, z+1), \quad \text{and} \quad (0,1)\cdot(x,z) = (f(x), z+\tau),
\]
for some fixed non-real \( \tau \in \mathds{C} \). Define
\[
X := (Y \times \mathds{C}) / \mathds{Z}^2,
\]
which is a holomorphic fibre bundle over the elliptic curve \( E = \mathds{C} / (\mathds{Z} + \tau \mathds{Z}) \), with fibre \( Y \).
Prove that \( X \) is not Kähler, its canonical bundle \( K_X \) is not torsion, even if the first Bott–Chern class vanishes, \( c_1^{BC}(X) = 0 \).
\end{exercise}

\begin{proof}[Hint]
Note that the image of the monodromy map \( \pi_1(E) \to \mathrm{Aut}(H^n(Y, \mathds{R})) \) contains an element of infinite order. This is not possible when \( X \) is Kähler, as shown in \cite[Corollary 4.10]{fujiki-InvMath}.
If \( \Omega \in H^0(Y, K_Y) \) is a nowhere vanishing holomorphic volume form, then \( f^* \Omega = \lambda \Omega \) for some \( \lambda \in \mathds{C} \) with \( |\lambda| = 1 \), and \( \lambda \) not a root of unity. This implies that \( {\rm Kod}(X) = -\infty \), in particular \( K_X \) is not torsion.
Finally, let \( \omega_Y \) be a Kähler Ricci-flat metric on \( Y \) and let \( \omega_E \) denote the standard flat metric on \( \mathds{C} \). Then \( \omega_Y^n \wedge \omega_E \) defines a Ricci-flat volume form on \( Y \times \mathds{C} \), which is invariant under the \( \mathds{Z}^2 \)-action and then descends to \( X \). This provides a Hermitian metric on \( X \) with vanishing Chern-Ricci curvature, and thus \( c_1^{BC}(X) = 0 \).
\end{proof}

\begin{exercise}
Prove that the Hopf manifold $X$ has $c_1(X)=0$ but $c_1^{BC}(X)\neq0$.
\end{exercise}

\begin{proof}[Hint]
Recall that $b_2(X)=b_2(S^1\times S^{2n-1})=0$. Note that the standard metric $\omega$ has ${\rm Ric}^{Ch}(\omega)\geq0$ but ${\rm Ric}^{Ch}(\omega)\neq0$.
\end{proof}

\begin{exercise}
Prove that the hypothetical complex structure on the six sphere $S^6$ has $c_1(X)=0$ but $c_1^{BC}(X)\neq 0$.
\end{exercise}

\begin{proof}[Hint]
Prove that the conditions \( c_1(X) = 0 \) and \( b_2(X) = 0 \) imply that \( K_X \) is holomorphically torsion. On the other hand, since \( H^1(X,\mathds{Z}) = H^3(X,\mathds{Z}) = 0 \), the exponential sequence gives \( {\rm Pic}(X) \simeq H^1(X,\mathcal{O}_X) \), which is torsion-free. Then \( K_X \) is in fact holomorphically trivial, so there exists a nowhere vanishing holomorphic $3$-form \( \Omega \).
Since \( \Omega \) is closed and \( H^3_{\mathrm{dR}}(X, \mathds{C}) = 0 \), it is exact: \( \Omega = d\eta \). Then
\[
0<(\sqrt{-1})^{n^2}\, \int_X \Omega \wedge \overline{\Omega} = (\sqrt{-1})^{n^2}\, \int_X d(\eta \wedge \overline{\Omega}) = 0,
\]
where $n=3$, which is a contradiction.
\end{proof}

\begin{exercise}
Let \( X \) be a compact holomorphic surface. Let \( \omega(t) \) be a family of Hermitian metrics evolving along the Chern-Ricci flow starting at \( \omega_0 \). Show that the volumes of $X$ and of any curve $D$ evolve according to the following formulas:
\begin{align*}
{\rm Vol}(X, \omega(t)) &= \frac{1}{2} \int_X \left(\omega_0 - t\, {\rm Ric}^{Ch}(\omega_0)\right)^2, \\
{\rm Vol}(D, \omega(t)) &= \int_D \left(\omega_0 - t\, {\rm Ric}^{Ch}(\omega_0)\right).
\end{align*}
\end{exercise}

\begin{exercise}
Let \( X \) be a compact holomorphic manifold endowed with a Hermitian metric \( \omega_0 \).
Show that, up to reparametrization of time and rescaling of the metric, there is a one-to-one correspondence between the Chern–Ricci flow
\begin{equation}\tag{CRF}\label{eq:crf-ex}
\frac{\partial}{\partial t} \omega(t) = -\mathrm{Ric}^{Ch}(\omega(t)), \quad \omega(0) = \omega_0,
\end{equation}
and the normalized Chern–Ricci flow
\begin{equation}\tag{NCRF}\label{eq:ncrf-ex}
\frac{\partial}{\partial t} \omega(t) = -\mathrm{Ric}^{Ch}(\omega(t)) - \omega(t), \quad \omega(0) = \omega_0.
\end{equation}
\end{exercise}

\begin{proof}[Hint]
If \(\tilde{\omega}(s)\) solves \eqref{eq:crf-ex}, then setting
\[
\omega(t) := \frac{\tilde{\omega}(s)}{s+1} \quad \text{with} \quad t = \log(s+1),
\]
gives a solution to \eqref{eq:ncrf-ex}
Conversely, if \(\omega(t)\) solves \eqref{eq:ncrf-ex}, then
\[
\tilde{\omega}(s) := e^{t} \omega(t) \quad \text{with} \quad s = e^{t} - 1,
\]
solves \eqref{eq:crf-ex}.
\end{proof}

\begin{exercise}[{\cite{tosatti-weinkove-CRF}}]
Let \( X \) be a compact holomorphic surface. Consider the maximal existence time \( T \) of the Chern–Ricci flow starting at any Gauduchon metric \( \omega_0 \). Prove that:
\begin{itemize}
\item If \( T = +\infty \), then \( X \) is minimal.
\item If the Chern–Ricci flow collapses in finite time, {\itshape i.e.} \( T < +\infty \) and \( {\rm Vol}(X, \omega(t)) \to 0 \) as \( t \to T^-\), then \( {\rm Kod}(X)=-\infty \), therefore \( X \) is either birational to a ruled surface or belongs to class VII.
\item If \( T < +\infty \) and the flow is non-collapsing, {\itshape i.e.} \( {\rm Vol}(X, \omega(t)) \geq c > 0 \) as \( t \to T^-\), then \( X \) contains \((-1)\)-curves.
\end{itemize}
\end{exercise}

\begin{exercise}[{\cite{tosatti-weinkove-surfacesCRF}}]
Compute explitic solution of the (normalized) Chern-Ricci flow on Hopf surface and Inoue surface, starting at the standard metrics.
\end{exercise}

\ifdefined\frommain
\else
    \bibliographystyle{alpha}
    \bibliography{biblio}
\fi

\cleardoublepage

\section{Open problems in complex non-K\"ahler geometry}\label{sec:open}

\begin{flushright}
{\itshape
Mathematics education can mean\\
disempowerment or empowerment.\\
}
(O.~Skovsmose, M.~Niss, \cite{skovsmose-niss})
\end{flushright}

The following list presents a selection of open problems in complex non-K\"ahler geometry, and related boundary regions. It is by no means intended to be complete or comprehensive and naturally reflects the authors’ interests and limited perspective. Nonetheless, the authors wish to emphasize the importance of identifying meaningful directions of research, --- also in light of ethical considerations and social justice.

\subsection{Geometric representatives of cohomological classes}

Even though it pertains primarily to K\"ahler algebraic geometry, we cannot omit mentioning the Hodge Conjecture, which seeks to relate geometric objects to cohomology classes. For a complete exposition, we refer to \cite{voisin-FR}.

Consider an analytic subset of a holomorphic manifold \( X \), namely, a closed subset \( Z \subset X \) that is locally defined as the zero set of holomorphic functions. The analytic subset \( Z \) is said to be irreducible if its smooth part \( Z_{\mathrm{smooth}} \) is connected. In this case, we define the dimension of \( Z \) as the complex dimension of \( Z_{\mathrm{smooth}} \). By the Weierstrass Preparation Theorem, an irreducible analytic subset \( Z \subset X \) of codimension \( r \) defines a cohomology class \( [Z] \in H^{2r}(X, \mathds{Z}) \), given by the Poincaré dual of the smooth part \( Z_{\mathrm{smooth}} \).
We define the group of analytic cycles of codimension \( r \) as the free Abelian group \( \mathcal{Z}^r(X) \) generated by irreducible analytic subsets of codimension \( r \). By the previous argument, there is a well-defined cycle class map
\[
\mathcal{Z}^r(X) \to H^{2r}(X, \mathds{Z}),
\]
whose image,
\[
H^{2r}(X, \mathds{Z})_{\mathrm{an}} := \im ( \mathcal{Z}^r(X) \to H^{2r}(X, \mathds{Z})) ,
\]
is called the group of \emph{analytic classes}.
In the projective algebraic case, we speak of \emph{algebraic classes}, denoted by \( H^{2r}(X,\mathds{Z})_{\mathrm{alg}} \).
Note that, when \( X \) is K\"ahler, analytic classes lie in \( H^{2r}(X,\mathds{Z}) \cap H^{r,r}(X) \), namely, they are {\em Hodge classes}.

\begin{problem}[rational Hodge conjecture]
Let $X$ be a projective manifold. Then any Hodge class of degree $2r$ is represented by an integer multiple of the class of an algebraic cycle, that is,
$$ H^{2r}(X,\mathds Q)_{\mathrm{alg}} = H^{2r}(X,\mathds Q) \cap H^{r,r}(X) . $$
\end{problem}

As far as now, the conjecture is confirmed for projective threefolds, as consequence of the Leftschetz $(1, 1)$ theorem, as well for special classes of projective manifolds.

The statement is no longer true in the broader Kähler setting, as proved by Voisin \cite{voisin-IMRN}. It would be interesting to understand to what extent the Hodge problem can be formulated in the more general complex setting, possibly beyond the Kähler category, or at least within suitable subclasses of complex manifolds \cite{bassanelli}.

\subsection{Topological obstructions to holomorphic geometry}

A fundamental question is to study algebraic invariants defined on compact holomorphic, or even differentiable, manifolds, and to clarify to what extent they depend on the presence of an integrable complex structure (see also \Cref{sec:dolbeault-almost-complex}).
We take inspiration and reference from the beautiful survey \cite{stelzig-survey}, focusing on cohomology groups, Chern classes, rational homotopy groups, and higher operations, both in their {\em realization problem} (namely, which sets of invariants can actually be realized by a space, or what linear relations they must satisfy), and their {\em classification problem} (namely, to what extent the given invariants determine the space).

For example, every sequence of non-negative integers can be realized as the sequence of Betti numbers of some topological space, for instance, by taking wedges of spheres. Even more deeply, it is a classical result in Rational Homotopy Theory that any finite-dimensional, simply-connected, rational commutative graded algebra can be realized as the singular cohomology ring with rational coefficients of a simply-connected topological space.

When dealing with compact differentiable manifolds \( X \) of dimension \( n \), there exists a perfect pairing \( H^k(X, \mathds{Q}) \times H^{n-k}(X, \mathds{Q}) \to \mathds{Q} \), which, in particular, implies the symmetry of Betti numbers: \( b_k(X) = b_{n-k}(X) \). Moreover, when \( n = 2k \) is even, the induced pairing on \( H^k(X, \mathds{Q}) \) is \( (-1)^k \)-symmetric. As a consequence, if \( n \equiv 2 \mod 4 \), then the middle Betti number satisfies \( b_{n/2}(X) \equiv 0 \mod 2 \).
By \cite{kotschick-schreider}, the relations above are the only relations that hold universally for all Betti numbers of compact smooth manifolds. The realization problem for the cohomology ring of compact smooth manifolds was studied by Sullivan \cite{sullivan} and Barge \cite{barge}; see {\itshape e.g.} \cite[Theorem 1.5]{stelzig-survey} for a precise statement.

The almost-complex version of the Sullivan-Barge theorem was stated in \cite{sullivan} and later proved by Milivojevi\'c \cite{milivojevic}; see \cite[Theorem 1.7]{stelzig-survey} for a precise statement. In this case, the realization problem involves the Stong congruences satisfied by the Chern numbers (namely, integrality conditions coming out of the Atiyah-Singer Index Theorem), the requirement that the signature of the intersection form equals a polynomial combination of Pontryagin numbers, and the condition that the Euler characteristic equals the top Chern number.

One advantage of working with integrable complex structures is the availability of various cohomology theories.
In the compact Kähler case, a result by Kotschick and Schreieder \cite{kotschick-schreider} shows that the only universal rational linear relations between Hodge numbers, Betti numbers, and Chern numbers are those implied by the real structure and Poincaré duality, the Hodge decomposition, and the Riemann-Roch theorem.
The following questions naturally arise in this context: see \cite{stelzig-survey} and the references therein for further details and partial results.

\begin{problem}[{\cite{stelzig-survey}}]
What are the linear relations that hold universally between the dimensions of the various cohomology theories associated with compact complex manifolds of a given dimension?
Are there other universal rational relations beyond those identified in \cite{stelzig-JLMS}, which arise from the real structure, duality, constraints involving only dots in the corner, Frölicher spectral sequence degeneration for compact complex surfaces, and the known quadratic relation in complex dimension \( n = 2 \)?
Are there linear combinations of the multiplicities of zigzags and Chern numbers that are topological invariants in complex dimension \( n \geq 3 \), other than the linear combinations of Betti and Pontryagin numbers?
Are there linear combinations of cohomological invariants that are bimeromorphic invariants, other than those identified in \cite{stelzig-IMRN, stelzig-JLMS}?
\end{problem}

This question is also related to a problem posed by Hirzebruch, see \cite[Problem 31]{hirzebruch-problems}. It essentially reduces to a construction problem, see \cite{stelzig-AdvMath22, stelzig-survey} for a precise formulation.
We also note the existence of universal inequalities among cohomological invariants, see \cite{angella-tomassini-InvMath, angella-tardini-PAMS, popovici-stelzig-ugarte, stelzig-wilson}.

We briefly mention two further facts. First, a fundamental result by Taubes \cite{taubes}, see {\itshape e.g.} \cite[Corollary 1.66]{amoros-burger-corlette-kotshchick-toledo}, which states that every finitely presentable group arises as the fundamental group of some compact complex threefold. Second, we highlight recent work by Stelzig \cite{stelzig-AdvMath25}, who developed a pluripotential analogue of rational homotopy theory for compact complex manifolds.

Understanding to what extent the presence of a complex structure constrains the underlying topology is a fundamental open question, to which the results and questions discussed above aim to contribute.
Since spheres represent the simplest topological spaces, a natural question is whether a sphere admits an integrable complex structure. For \( S^2 \) and \( S^6 \), almost-complex structures can be constructed using the quaternions and octonions, respectively. However, a classical result by Borel and Serre \cite{borel-serre} shows that no other sphere admits an almost-complex structure.
The natural almost-complex structure on \( S^2 \) is integrable and corresponds to the complex manifold \( \mathds CP^1 \). In contrast, due to the non-associativity of the octonions, the natural almost-complex structure on \( S^6 \) is not integrable. Whether \( S^6 \) admits any integrable almost-complex structure remains an open question, known as the Hopf problem. We refer to \cite{agricola-bazzoni-goertsches-konstantis-rollenske} for a detailed history and recent developments on this problem.

\begin{problem}[Hopf problem {\cite{hopf} and Yau problem \cite{vandeven, yau-problems}}]
Does the six-dimensional sphere \( S^6 \) admit any integrable complex structure?
More generally, is there a differentiable manifold of real dimension \( 2n \geq 6 \) that admits an almost-complex structure but no integrable complex structure?
\end{problem}

As a tentative approach to the Hopf problem, and inspired by an analogue picture in real fluid dynamics, Sullivan proposed the following conjecture, as communicated in \cite{albanese-milivojevic}.

\begin{problem}[Sullivan Conjecture, see {\cite{albanese-milivojevic}}]
Is the minimal sum of Betti numbers of a compact complex manifold of complex dimension \( n \geq 3 \) equal to four?
\end{problem}

For compact complex manifolds of complex dimension \( n \geq 4 \), it is expected that the minimal total Betti number is \(3\), achieved for instance by Calabi-Eckmann manifolds and Hopf manifolds.  
For compact complex curves, the minimal value is \(2\) (attained by \(\mathds CP^1\)), while for compact complex surfaces it is \(3\) (attained by \(\mathds CP^2\)).
A positive answer to the Sullivan conjecture would therefore exclude the existence of a complex structure on \( S^6 \), whose total Betti number is only \(2\), while the next minimal value is attained by \(\mathds CP^3\).

We notice that the existence of a complex structure on \( S^6 \) would imply the existence of an exotic complex structure on \( \mathds CP^3 \), obtained by blowing up the complex \( S^6 \) at a point, see {\itshape e.g.} \cite{tosatti-Expo}.  
Note that it remains an open question whether every complex structure on \( \mathds CP^3 \) or, more generally, on any compact Kähler manifold of dimension greater than or equal to \( 3 \), is necessarily Kähler.

\subsection{Almost-complex cohomologies}\label{sec:dolbeault-almost-complex}

Let us assume that \( M \) is a differentiable manifold endowed with a (non-integrable) almost-complex structure. In this case, \( \overline\partial^2 \neq 0 \), but once an almost-Hermitian metric $g$ is fixed, we can still define the elliptic operator 
\[
\square_g = \overline\partial\, \overline\partial^* + \overline\partial^*\, \overline\partial.
\]
In the integrable case, Hodge theory guarantees that the dimension of its kernel is independent of the metric. However, this is not necessarily true in the non-integrable setting. This question appears in Hirzebruch's famous problem list \cite{hirzebruch-problems} as Problem 20, known as the Kodaira–Spencer problem.

\begin{problem}[Kodaira--Spencer, Hirzebruch~\cite{hirzebruch-problems}]
Let \( M \) be a differentiable manifold endowed with an almost-complex structure $J$. Choose an almost-Hermitian metric \( g \) and consider the numbers 
\[
h^{p,q}_g := \dim\left( \ker \square_g \cap \Omega^{p,q}(M,J) \right).
\]
Is \( h^{p,q}_g \) independent of the choice of the almost-Hermitian structure?
If not, find another definition of ``Hodge numbers'' of \( (M, J) \) which depends only on the almost-complex structure and generalizes the Hodge numbers in the integrable complex case.
\end{problem}

Recently, Holt and Zhang \cite{holt-zhang-AdvMath} gave a negative answer to the problem. By developing an effective method to determine the \( \Box \)-harmonic forms on the Kodaira-Thurston manifold, they showed that the quantity \( h^{0,1} \) can vary with different choices of almost-Hermitian metrics. With different techniques, involving locally conformally almost-K\"ahler metrics, in \cite{tardini-tomassini-MRL} it was shown that also \( h^{1,1} \) can vary with different choices of almost-Hermitian metrics. More precisely, let $(X^4,J)$ be a compact almost-complex manifold of dimension $4$ and let $\omega$ be an almost-Hermitian metric, then 
if $\omega$ is globally conformally almost-K\"ahler (in particular if it is almost-K\"ahler), it holds
$
h^{1,1}=b_-+1.
$
If $\omega$ is (strictly) locally conformally almost-K\"ahler, 
$
h^{1,1}=b_-.
$
Here $b_-$ is the dimension of the space of anti self-dual harmonic forms. In fact, explicit examples of almost-complex manifolds admitting these two kinds of of metrics can be constructed. Notice that by \cite{vaisman}, on compact complex manifolds, globally conformally K\"ahler metrics and (strictly) locally conformally K\"ahler metrics cannot coexist.

A recent attempt to define an analogue of Dolbeault cohomology in the almost-complex setting is due to Cirici and Wilson \cite{cirici-wilson}. Given the decomposition of the exterior differential
\[
d = \mu + \partial + \overline{\partial} + \overline{\mu}
\]
into pure-type components, where \( \mu \colon \Omega^{p,q}(M,J) \to \Omega^{p+2,q-1}(M,J) \), they define the Dolbeault cohomology of \( (M, J) \) as
\[
H^{p,q}_{\mathrm{Dol}}(M,J) := H^q\left(H^{p}(\Omega^{\bullet,\bullet}(M,J), \overline{\mu}), \overline{\partial} \right),
\]
which arises as the first page of a functorial spectral sequence converging to the de Rham cohomology of \( M \).
It often happens that this cohomology is infinite-dimensional, see \cite{cohelo-placini-stelzig}.

In general, it would be useful to extend classical invariants of complex geometry to the non-integrable case, see for instance \cite{chen-zhang} regarding the Kodaira dimension. We refer to \cite{stelzig-survey} for a recent and thorough survey on related problems in this direction.

\subsection{Topological obstructions to Hermitian geometry}

As we have seen, the existence of Kähler metrics imposes strong topological obstructions on the underlying manifold. On the other hand, there are no known topological obstructions for weaker metric classes such as balanced manifolds, although they admit characterizations in terms of currents \cite{michelsohn}.
Moreover, it can be easily shown that a Hermitian metric which is both pluriclosed and balanced is necessarily Kähler \cite{alexandrov-ivanov}. In all known examples, the existence of both a pluriclosed and a balanced metric (not necessarily the same metric) on the same manifold occurs only in the Kähler case.

\begin{problem}[Fino--Vezzoni conjecture {\cite{fino-vezzoni-JGP, fino-vezzoni-PAMS}}]
Let \(X\) be a compact holomorphic manifold that does not admit any Kähler metric. Is it true that the existence of balanced Hermitian metrics and the existence of pluriclosed Hermitian metrics are mutually exclusive properties on \(X\)?
Where does this incompatibility arise from, and are there other pairs of special Hermitian metrics exhibiting similar incompatibility phenomena?
\end{problem}

In the same spirit, the Streets-Tian Conjecture \cite{streets-tian-IMRN} states that a compact holomorphic manifold admitting a Hermitian-symplectic metric (that is, a Hermitian metric which is the \((1,1)\)-component of a closed real \(2\)-form) must also admit a Kähler metric. This can be viewed as a higher-dimensional, integrable analogue of Donaldson's tamed-to-compatible question for almost-complex \(4\)-manifolds \cite{donaldson-Chern}.

It would be interesting to investigate whether special metrics exist on certain classes of complex manifolds. For instance, all known examples of compact complex manifolds satisfying the \(\partial\overline{\partial}\)-Lemma property, including Mo\v{\i}\v{s}ezon manifolds and manifolds in Fujiki class \(\mathcal{C}\), twistor spaces, and manifolds arising from small deformations, indeed admit balanced metrics. The existence of balanced metrics may follow from solving Hessian equations on Hermitian manifolds related to Gauduchon-Calabi-Yau \cite{szekelyhidi-tosatti-weinkove, popovici}, as we first learned from Demailly.

\begin{problem}[Demailly]
Does every compact complex manifold satisfying the \(\partial\overline{\partial}\)-Lemma property admit a balanced Hermitian metric?
\end{problem}

We conclude this discussion by noting that construction techniques for special Hermitian metrics may involve several approaches: elliptic non-linear partial differential equations, even in weak sense (note that, when dealing with non-Kähler metrics, deformations within a fixed $\sqrt{-1}\partial\overline\partial$-class do not, in general, preserve the total volume, see \cite{guedj-lu-1, guedj-lu-2, guedj-lu-3, angella-guedj-lu} for related results and applications); geometric flows (since Gauduchon metrics are ubiquitous in any conformal class, the existence of a geometric flow preserving the Gauduchon condition in all complex dimensions would be of particular interest); variational methods (see, for instance, \cite{gauduchon-mathann, angella-istrati-otiman-tardini}); and more recently, probabilistic techniques in the spirit of \cite{berman-Yau, berman-ICM}, see also the references therein and \cite{ma-marinescu}.

\subsection{Canonical Hermitian metrics}

We already discussed the Chern-Yamabe problem, namely the problem of finding a metric with constant scalar curvature with respect to the Chern connection in any conformal class of Hermitian manifolds. As we said, the problem is still open when the expected constant curvature is positive, and variational techniques in the standard sense can not be applied.
Globally speaking, such a metric would be highly non-canonical, since one exists for any conformal class: one may asks how to make it canonical by coupling it with another condition, for example, requiring it to be Gauduchon. Recall that any conformal class of Hermitian metrics contains a Gauduchon representative, as proved in \cite{gauduchon-cras}, thus suggesting a possible iterative procedure.
The choice of the Chern connection is motivated by its role in complex geometry, especially from the analytic perspective. However, other choices are possible: see, for instance, the results by Barbaro \cite{barbaro} concerning Bismut connection.
We also suggested that the role of the scalar curvature might be replaced by other, more significant quantities, for example, the moment map constructed in \cite{angella-calamai-pediconi-spotti}.
We then state the problem in the following way.

\begin{problem}[Chern-Yamabe-type problems {\cite{angella-calamai-spotti, angella-pediconi-scarpa-spotti-windare}}]
Let $X$ be a compact holomorphic manifold. In any conformal class of Hermitian metric, does there exist a representative characterized by the constancy of the scalar curvature of the Chern connection, or of other related geometric quantities?
What can be said about the geometry of the moduli space of such metrics, {\itshape e.g.} compactness?
Is there a meaningful way to couple the curvature condition with a ``cohomological'' condition, {\itshape e.g.}, to construct constant Chern scalar curvature Gauduchon metrics?
\end{problem}

\subsection{Classification of complex manifolds and surfaces}

The Global Spherical Shell (GSS) Conjecture claims that all minimal class VII surfaces with positive second Betti number admit a global spherical shell, that is, a smooth \(3\)-sphere with a neighbourhood biholomorphic to a neighbourhood of \(S^3 \subset \mathds{C}^2\), whose complement is connected. Thanks to \cite{dloussky-oeljeklaus-toma}, the GSS Conjecture is equivalent to the following standard conjecture, whose proof would complete the classification of class VII surfaces, showing that they consist only of Hopf, Inoue, and Kato surfaces.

\begin{problem}[standard conjecture on class VII surfaces]
Any minimal compact holomorphic surface in class VII with positive second Betti number \( b_2 > 0 \) has exactly \( b_2(X) \) rational curves.
\end{problem}

By combining techniques from complex geometry and gauge theory, Teleman \cite{teleman-InvMath, teleman-AnnMath, teleman-indam, teleman-PSPUM} proved that any minimal class VII surface \( X \) with \( 1 \leq b_2(X) \leq 3 \) contains a cycle of curves. A result of Nakamura \cite{nakamura-Tohoku} then shows that \( X \) is a degeneration of a one-parameter family of blown-up primary Hopf surfaces, therefore completing the classification up to deformation equivalence for small second Betti number.
We refer to \cite{teleman-LNM} for a recent account of the latest developments in the classification of non-K\"ahlerian surfaces.

It is natural to wonder whether geometric flows can be used to reconstruct the global spherical shell, see \cite{streets-tian-IMRN, fang-tosatti-weinkove-zheng, tosatti-weinkove-surveyCRF}.

Concerning the classification of compact holomorphic manifolds in arbitrary dimension, we must mention the Minimal Model Program initiated by Mori and its analytic counterpart developed by Song and Tian \cite{song-tian} in the setting of projective algebraic varieties. In the more general, complex and non-K\"ahler case, one may expect only partial classification results for specific classes of manifolds, such as locally conformally Kähler threefolds.

We also recall a conjecture of Bogomolov \cite{bogomolov-RIMS}, asserting that any compact complex manifold can be realized as a submanifold transverse to an algebraic foliation on a projective variety. A partial result in this direction was obtained by Demailly and Gaussier \cite{demailly-gaussier}, who established an embedding theorem for compact almost complex manifolds as subvarieties transverse to an algebraic distribution in a complex affine algebraic manifold.

\ifdefined\frommain
\else
    \bibliographystyle{alpha}
    \bibliography{biblio}
\fi

\cleardoublepage

\bibliographystyle{alpha}
\bibliography{biblio}

\end{document}